\newcommand{\rG}{{\rm G}}
\newcommand{\rS}{{\rm S}}
\newcommand{\rT}{{\rm T}}
\newcommand{\bi}{{\bf i}}
\newcommand{\cA}{\mathcal{A}}
\newcommand{\cI}{\mathcal{I}}
\newcommand{\cV}{\mathcal{V}}
\newcommand{\Q}{\mathbb{Q}}
\newcommand{\R}{\mathbb{R}}
\newcommand{\C}{\mathbb{C}}
\newcommand{\SU}{{\rm SU}}
\renewcommand{\P}{\mathbb{P}}
\renewcommand{\epsilon}{\varepsilon}
\newcommand{\Hol}{\mathrm{Hol}}
\renewcommand{\Im}{\mathop{\mathrm{Im}}}
\renewcommand{\Re}{\mathop{\mathrm{Re}}}
\newcommand{\tr}{\mathop{\mathrm{tr}}\nolimits}
\newcommand{\vol}{\mathrm{vol}}
\newcommand{\qandq}{\quad\text{and}\quad}
\newcommand{\qwithq}{\quad\text{with}\quad}
\newcommand{\qforq}{\quad\text{for}\quad}
\def\<{\mathopen{}\left<}
\def\>{\right>\mathclose{}}
\def\({\mathopen{}\left(}
\def\){\right)\mathclose{}}
\definecolor{gold}{rgb}{0.85,.66,0}
\definecolor{cherry}{rgb}{0.9,.1,.2}
\definecolor{burgundy}{rgb}{0.8,.2,.2}
\definecolor{orangered}{rgb}{0.85,.3,0}
\definecolor{orange}{rgb}{0.85,.4,0}
\definecolor{olive}{rgb}{.45,.4,0}
\definecolor{lime}{rgb}{.6,.9,0}
\definecolor{green}{rgb}{.2,.7,0}
\definecolor{grey}{rgb}{.4,.4,.2}
\definecolor{brown}{rgb}{.4,.3,.1}
\newtheorem{theorem}{Theorem}
\newtheorem{prop}{Proposition}[section]
\newtheorem{cor}[prop]{Corollary}
\newtheorem{lemma}[prop]{Lemma}
\numberwithin{substep}{step}
\newtheorem{case}{Case}
\numberwithin{subcase}{case}
\theoremstyle{remark}
\newtheorem{remark}[prop]{Remark}
\theoremstyle{definition}
\newtheorem{definition}[prop]{Definition}
\newtheorem{example}[prop]{Example}
\newcommand{\ep}{_\epsilon}
\newcommand{\epm}{_{\epsilon,m}}
\author{
  Jason D. Lotay \\
  \emph{University of Oxford}
  \and
  Henrique N. Sá Earp \\
  \emph{University of Campinas (Unicamp)}
  \bigskip
}
\title{The heterotic $\rG_2$ system on contact Calabi--Yau $7$-manifolds}
\date{}
\begin{document}

\maketitle

\begin{abstract}
     We obtain non-trivial approximate solutions to the heterotic $\rm{G}_2$ system on the total spaces of non-trivial circle bundles over Calabi--Yau $3$-orbifolds, which satisfy the equations up to an arbitrarily small error, by adjusting the size of the $S^1$ fibres in proportion to a power of the string constant $\alpha'$.  Each approximate solution provides a cocalibrated $\rm{G}_2$-structure, the torsion of which realises a non-trivial scalar field, a constant (trivial) dilaton  field and an $H$-flux with nontrivial Chern--Simons defect.  The approximate solutions also include a connection  on the tangent bundle which, together with a non-flat $\rm{G}_2$-instanton induced from the horizontal Calabi--Yau metric, satisfy  the anomaly-free condition, also known as the heterotic Bianchi identity.  The approximate solutions fail to be genuine solutions solely because the connections on the tangent bundle are only $\rm{G}_2$-instantons up to higher order corrections in  $\alpha'$.  
\end{abstract}

\begin{adjustwidth}{0.95cm}{0.95cm}
    \tableofcontents
\end{adjustwidth}


\newpage

\section{Introduction}

The heterotic $\rG_2$ system intertwines geometric and gauge-theoretic degrees of freedom over a $7$-manifold with $\rG_2$-structure, subject to instanton-type equations and a prescribed Chern--Simons defect. \textcolor{black}{The latter constraint is required by \textcolor{black}{what physicists refer to as} the Green--Schwartz anomaly cancellation mechanism}. This setup fits in the broader context of so-called Hull--Strominger systems on manifolds with special geometry, particularly in real dimensions 6, 7 and 8, which arise as low-energy effective theories of the heterotic string. Its motivation aside, we should stress from the outset that the language and arguments in this paper are primarily aimed at a mathematical audience.

To the best of our knowledge, the present problem was first formulated in the mathematics literature by Fernandez et al.  \cite{Fernandez2011}, who \textcolor{black}{found} `the first explicit compact valid supersymmetric heterotic solutions
with non-zero flux, non-flat instanton and constant dilaton' on some carefully chosen generalised Heisenberg nilmanifolds. Moreover, they somewhat \textcolor{black}{inspired} our approach, by invoking the methods of Kobayashi \cite{Kobayashi1956} to guarantee, albeit non-constructively, the existence of circle fibrations which \emph{partially} satisfy the heterotic $\rG_2$ system \cite{Fernandez2011}*{Theorem 6.4}. For a comprehensive survey of the problem's origins in the string theory literature, we refer the reader to that paper's Introduction and references therein. 

Over recent years, such Hull--Strominger systems have attracted substantial interest. For instance, Garc\'ia-Fern\'andez et al.~have addressed  description of infinitesimal moduli of solutions to these systems  over a Calabi--Yau \cite{GarciaFernandez2017} or $\rG_2$-manifold \cite{Clarke2016} base, as well as an interpretation of the problem from the perspective of generalised Ricci flow on a Courant algebroid \cite{Garcia-Fernandez2019}. More recently still, Fino et al.~\cite{Fino2021} have found solutions to the Hull--Strominger system in 6 dimensions using 2-torus bundles over K3 orbifolds, extending the fundamental work of Fu--Yau \cite{Fu2008}, which also has some relation to our study.

Our approach to the heterotic $\rG_2$ system will follow most closely the thorough investigation by de la Ossa et al.~in  \cites{delaOssa2016,Ossa2018a,Ossa2018}, who propose, among various contributions, a physically viable formulation of the problem for $\rG_2$-structures with torsion. Indeed, we  study the system \color{black} over so-called contact Calabi--Yau (cCY) $7$-manifolds, which carry cocalibrated $\rG_2$-structures; cCY manifolds were introduced by \cite{Habib2015}, and gauge theory on $7$-dimensional cCY was proposed in \cite{Calvo-Andrade2020} and further studied in \cite{Portilla2019}. Our base $7$-manifolds include the total spaces of $S^1$-(orbi)bundles over every weighted Calabi--Yau $3$-fold famously listed by Candelas-Lynker-Schimmrigk \cite{Candelas1990}, seen as links of isolated hypersurface singularities on $S^9\subset\C^5$.  In particular, we obtain  \emph{constructive} approximate solutions to the heterotic $\rG_2$-system over compact \emph{simply-connected} (actually, $2$-connected) $7$-manifolds as in Example~\ref{ex: CY links}, which can be made arbitrarily close to genuine solutions by shrinking the circle fibres. 

\subsection{\texorpdfstring{Heterotic $\rG_2$ system or $\rG_2$-Hull--Strominger system}{Heterotic G2 system or G2-Hull-Strominger system}}

\begin{definition}
\label{dfn:torsion}
    On a 7-manifold with $\rG_2$-structure $(K^7,\varphi)$, we let $\psi=*\varphi\in\Omega^4(K)$ and recall the following characterisations of some components of $\Omega^\bullet(K)$ corresponding to irreducible $\rG_2$-representations:
\begin{equation*}
\begin{split}
    \Omega^2_{14}(K)&=\{\beta\in\Omega^2(K)\,:\,\beta\wedge\varphi=-*\beta\}=\{\beta\in\Omega^2(K)\,:\,\beta\wedge\psi=0\},\\
    \Omega^3_{27}(K)&=\{\gamma\in\Omega^3(K)\,:\,\gamma\wedge\varphi=0,\,\gamma\wedge\psi=0\}.
\end{split}    
\end{equation*}
The \emph{torsion} of $\varphi$ is completely described by the quantities $\tau_0\in C^{\infty}(K)$, $\tau_1\in\Omega^1(K)$, $\tau_2\in\Omega^2_{14}(K)$ and $\tau_3\in\Omega^3_{27}(K)$, which satisfy
\begin{equation*}
    d\varphi=\tau_0\psi+ 3\tau_1\wedge\varphi+*\tau_3
    \qandq
    d\psi =4\tau_1\wedge\psi+\tau_2\wedge\varphi.
\end{equation*}
\end{definition}

Given a smooth $G$-bundle $F\to K$, for some compact semi-simple Lie group $G$, let $\cA(F)$ denote its space of smooth $G$-connections.

\begin{definition}
\label{dfn:heterotic.G2}
    The \emph{heterotic $\rG_2$ system} or \emph{$\rG_2$-Hull--Strominger system} on a 
    $7$-manifold with $\rG_2$-structure $(K,\varphi)$ is comprised of the following degrees of freedom:
\begin{itemize}
    \item 
    Geometric fields \textcolor{black}{(tensors)}:
    $$
    \lambda\in \R \text{ (scalar field)},
    \quad
    \mu\in C^\infty(K) \text{ (dilaton)},
    \qandq
    H\in\Omega^3(K) \text{ (flux)}.
    $$

    \item
    Gauge fields \textcolor{black}{(connections)}:
    $$
    A\in\cA(E),
    \qandq
    \theta\in\cA(TK),
    $$
    where $E\to K$ is a vector bundle  and both connections are respectively $\rG_2$-instantons:
    $$
    F_A\wedge\psi =0
    \qandq
    R_\theta \wedge \psi=0.
    $$
\end{itemize}
    The geometric fields satisfy the following relations with the torsion of the $\rG_2$-structure $\varphi$:
\begin{equation}
\label{eq:torsion.conditions}
\begin{split}
    \tau_0 &=\frac{3}{7}\lambda\\
    \tau_1 &=\frac{1}{2}d\mu\\
    \tau_2 &=0\\
\end{split}\qquad
\begin{split}
    \textcolor{black}{H^\perp}&\textcolor{black}{=-\frac{1}{2}d\mu^\#\lrcorner\psi-\tau_3}\\
    H&=\frac{\lambda}{14}\varphi\oplus H^\perp\\
    \tau_3 &=-H^\perp-\frac{1}{2}d\mu^\#\lrcorner\psi.
\end{split}
\end{equation}
    Given  a (small) real constant $\alpha'\neq 0$, related to the string scale, the flux compensates exactly the Chern--Simons defect between the gauge fields via the \emph{anomaly-free condition}, also referred to as the \emph{heterotic Bianchi identity}:
\begin{equation}
\label{eq:anomaly}
    dH=\frac{\alpha'}{4}
    \left(
    \tr F_A\wedge F_A-\tr R_\theta\wedge R_\theta
    \right),
\end{equation}
where $F_A$ is the curvature of $A$, $R_\theta$ is the Riemann curvature tensor of $\theta$.
\end{definition}

\begin{remark}\label{rmk:orderalphaprime}
In the physics literature one obtains the heterotic $\rG_2$ system by truncating a system of equations involving (formal) power series in $\alpha'$.  Consequently, one finds statements such as that $\theta$ need only be a $\rG_2$-instanton on $TK$ ``up to $O(\alpha')$-corrections'', cf.~\cite{Ossa2014}*{Appendix B}.  One natural way to formulate the $\rG_2$-instanton condition  to order $O(\alpha')^k$ is
\[
|R_{\theta}\wedge\psi|_{g}=O(\alpha')^k,
\quad\text{as $\alpha'\to 0$,}
\]
where $|.|_g$ is the pointwise $C^0$-norm with respect to the $\rG_2$-metric $g$ defined by  $\varphi=\ast\psi$.  (Note that $\theta$, $\psi$ and $g$ can all depend on $\alpha'$.) We will state our results in Theorem \ref{thm: Main Theorem} below in those terms.  However, we should stress that we still adopt Definition~\ref{dfn:heterotic.G2} for a genuine solution to the heterotic $\rG_2$ system, in accordance with the mathematics literature on Hull--Strominger systems in 6 and 7 dimensions; see e.g.~ \cites{Fernandez2011, Ivanov2010, Clarke2016,GarciaFernandez2016,GarciaFernandez2017,GarciaFernandez2020}.  
\end{remark}

\begin{remark}
For physical reasons one  typically assumes $\alpha'>0$  in \eqref{eq:anomaly}, so we are not interested in the case $dH=0$.  Hence, \eqref{eq:anomaly} only has any hope of occurring under the so-called \emph{omalous} condition:
\begin{equation}
\label{eq:omalous}
    p_1(E)=p_1(K)\in H^4_{dR}(K).
\end{equation}
\end{remark}
\color{black}

Omalous bundles can be systematically constructed for instance via monad techniques, as in the following example, which is derived trivially by combining results from {\cites{Henni2013,Calvo-Andrade2020}}. In this paper, though, we will follow a different approach, cf. Theorem \ref{thm: Main Theorem} below.

\begin{example}
When $K$ is the link in $S^9$ associated to the Fermat quintic $V\subset \P^4$, the cohomology of the monad 
$$
\xymatrix{ 
0\ar[r]&\mathcal{O}_{V}(-1)^{\oplus 10}\ar[r] & \mathcal{O}_{V}^{\oplus 22}\ar[r] & \mathcal{O}_V(1)^{\oplus 10}\ar[r]& 0,
}
$$
is a rank $2$ omalous bundle $E$, i.e.~satisfying \eqref{eq:omalous}, with $c_1=0$ and $c_2=10 $.
\end{example}

\begin{remark}  
Fernandez et al.~\cite{Fernandez2011} argue that one can replace the $\rG_2$-instanton condition on $R_{\theta}$ by a more general second order condition, and still satisfy the equations of motion which motivate the heterotic $\rG_2$ system. However, Ivanov concluded separately that in this context both conditions are equivalent \cite{Ivanov2010}*{\S2.3.1}.
\end{remark}







\subsection{Gauge theory on contact Calabi--Yau (cCY) manifolds}

Let $(M^{2n+1},\eta,\xi)$  denote a contact   manifold, with contact form  $\eta$ and  Reeb vector field $\xi$ \cites{Boyer2008}. 
When $M$ is endowed in addition with a Sasakian structure, namely  an integrable transverse complex structure  $J $ and a compatible metric $g$, Biswas-Schumacher    \cites{Biswas2010} propose a natural  notion of Sasakian holomorphic structure for complex vector bundles $E\to M$. 

We recall that a connection $A$ on a
complex vector bundle over a Kähler manifold is said to be \emph{Hermitian Yang-Mills (HYM)} if
\begin{equation}
\label{eq: tHYM}
    \hat{F}_A := (F_A,\omega)= 0
    \qandq
    F^{ 0,2}_A = 0.
\end{equation}
This notion extends  to Sasakian bundles, by taking $\omega := d\eta\in\Omega^{1,1}(M)$ as a `transverse Kähler form', and defining HYM connections to be the solutions of \eqref{eq: tHYM} in that sense. The well-known concept of \emph{Chern  connection} also extends,  namely as a connection mutually compatible with the holomorphic structure (\emph{integrable}) and a given Hermitian bundle metric (\emph{unitary}), see \cite{Biswas2010}*{\S~3}.

An important class of Sasakian manifolds are those endowed with a \emph{contact Calabi--Yau (cCY)} structure [Definition \ref{def:cCY}], the Riemannian metrics of which have transverse holonomy $\SU(2n+1)$, in the sense of foliations, corresponding to the existence of a global  transverse holomorphic volume form $\Omega\in\Omega^{n,0}(M)$  \cites{Habib2015}. When $n=3$, cCY   $7$-manifolds are naturally endowed with a $\rG_2$-structure defined by the $3$-form
\begin{equation}
    \varphi :=\eta\wedge d\eta+\Re\Omega,
\end{equation} 
which is \emph{cocalibrated}, in the sense that its Hodge dual $\psi:=\ast_g\varphi $ is closed under the de Rham differential. When a $3$-form $\varphi $ on a $7$-manifold defines a $\rG_2$-structure, the condition
\begin{equation}
\label{eq:G2.inst.equation}
    F_A\wedge\psi=0
\end{equation}
is referred to as the  \emph{$\rG_2$-instanton equation}. On holomorphic Sasakian bundles over closed cCY  $7$-manifolds, it has the distinctive feature that integrable solutions \textcolor{black}{(i.e. compatible with the holomorphic Sasakian structure)} are indeed Yang-Mills critical points, even though the $\rG_2$-structure has torsion  \cites{Calvo-Andrade2020}.

\subsection{Statement of main result}


\begin{definition}
\label{dfn:G2.str}
    Let $V$ be a Calabi--Yau 3-orbifold with metric $g_V$, volume form $\vol_V$, K\"ahler form $\omega$ and holomorphic volume form $\Omega$ satisfying
\begin{equation}
\label{eq:volV}
    \vol_V
        =\frac{\omega^3}{3!}
        =\frac{\Re\Omega\wedge\Im\Omega}{4}.
\end{equation}
    Suppose that the total space of $\pi:K\to V$ is a contact Calabi--Yau 7-manifold, i.e. $K$ is a $S^1$-(orbi)bundle, with connection $1$-form $\eta$, such that\footnote{For ease of notation, we omit the pullback $\pi^*$ for forms and tensors defined on $K$ which are pulled back from $V$.}
$d\eta=\omega$.  
    For every $\epsilon>0$, we define a $S^1$-invariant $\rG_2$-structure on $K$ by 
\begin{align}
    \varphi_{\epsilon}
    &=\epsilon\eta\wedge\omega+\Re\Omega,
    \label{eq:phi.eps}\\
    \psi_{\epsilon}
    &=\frac{1}{2}\omega^2-\epsilon\eta\wedge\Im\Omega.
    \label{eq:psi.eps}
\end{align}
    The metric induced from this $\rG_2$-structure and its corresponding volume form are
\begin{equation}
\label{eq:metric.vol.eps}
    g_{\epsilon}
    =\epsilon^2\eta\otimes\eta+g_V
\qandq
    \vol_{\epsilon}
    =\epsilon\eta\wedge \vol_V.
\end{equation}
\end{definition}

\noindent NB.: The choice of $\epsilon>0$ will a posteriori depend on the string parameter $\alpha'$ in  \eqref{eq:anomaly}.

\textcolor{black}{We will see that producing geometric fields satisfying the prescribed relations \eqref{eq:torsion.conditions} with the torsion of the $\rG_2$-structure \eqref{eq:phi.eps} is rather straightforward.} The actual problem consists in obtaining  gauge fields that satisfy the heterotic Bianchi identity \eqref{eq:anomaly} on the contact Calabi--Yau $K^7$. We introduce therefore the following data:
\begin{itemize}
    \item Let $A:=\pi^*\Gamma_V$ be the pullback of the Levi-Civita connection of $g_V$ to  $E:=\pi^*TV\to K$. Then $A$ is a $\rG_2$-instanton on $E$, since it is the pullback of a HYM connection on $TV$ \cite{Calvo-Andrade2020}*{\S4.3}. Moreover, $A$ is a Yang-Mills connection and it minimises the Yang-Mills energy among Chern connections, with respect to the natural Sasakian holomorphic structure of $E$ [ibid., Theorem 1.4].
    
    \item For each fixed $\epsilon>0$, let $\theta_\epsilon$ denote the Levi-Civita connection of the metric $g_\epsilon$ on $K$ of Definition \ref{dfn:G2.str}.  
    It was shown in \cite{Friedrich2003} that there is a unique metric connection which makes $\varphi_\epsilon$ parallel and has totally skew-symmetric torsion (which may be identified with $H_{\epsilon}$), often called the \emph{Bismut connection} (and also sometimes called the \emph{canonical connection}).  Following work in \cite{Hull1986}, another natural metric connection which appears in the physics literature is the \emph{Hull connection}, whose torsion has the opposite sign to the Bismut connection. \color{black}  The \color{black} Bismut and Hull connections fit in a 1-parameter family $\{\theta_\epsilon^{\delta}\}$,  which are modifications of $\theta_\epsilon$ by a prescribed torsion component governed by the parameter $\delta\in\R$ and the flux $H_{\epsilon}$.  We further extend it to a 2-parameter family $\{\theta_\epsilon^{\delta,k}\}$, with\footnote{Choosing $k=0$ would in fact require the $S^1$-fibration $K\to V$ to be trivial, see Remark \ref{rem: k=0 trivial case}.} $k\in\R\smallsetminus\{0\}$, corresponding to ``squashings'' of the connections $\theta_{\epsilon}^{\delta}$.  Finally, we define a ``twist'' by an additional parameter $m\in\R$, to obtain our overall family of connections $\{\theta_{\epsilon,m}^{\delta,k}\}$ on $TK$ [Proposition \ref{prop:twist.curvature}].
    Whilst typically $\theta_{\epsilon,m}^{\delta,k}$ will \emph{not} be a $\rG_2$-instanton on $TK$, it does satisfy the $\rG_2$-instanton condition up to $O(\alpha')$-corrections (in the sense of Remark \ref{rmk:orderalphaprime}) \color{black} for various parameter choices. 
\end{itemize}

\begin{theorem}
\label{thm: Main Theorem}
    Let $(K^{7},\eta,\xi,J ,\Omega)$ be a contact Calabi--Yau $7$-manifold, fibering by $\pi:K^{7}\to V$ over the Calabi--Yau $3$-fold $(V,g_V,\omega,J,\Omega)$, and let $E:=\pi^*TV\to K$.
    
    Given any $\alpha'>0$, 
    there exist $k(\alpha'),\epsilon(\alpha')>0$ and $m,\delta\in\R$ such that
    the following assertions hold:
\begin{enumerate}[(i)]
    \item 
    The $\rG_2$-structure \eqref{eq:phi.eps} is coclosed and satisfies the torsion conditions \eqref{eq:torsion.conditions}, with scalar field $\lambda=\frac{\epsilon}{2}$, constant dilaton  $\mu\in \R$, and flux $H_\epsilon= -\epsilon^2\eta\wedge\omega +\epsilon\Re\Omega$.
        
    \item
    The connection $A:=\pi^*\Gamma_V$ is a $\rG_2$-instanton on $E$, with respect to the dual $4$-form \eqref{eq:psi.eps}.
        
    \item
    There exists a 
    connection $\theta:=\theta_{\epsilon,m}^{\delta,k}$ on $TK$, with torsion 
    $$
        H^{\delta,k}_{\epsilon,m} =\big(1-k-\frac{km}{2}\big)\epsilon^2\omega\otimes \eta+\frac{km\epsilon^2}{2}\eta\wedge\omega+k\delta H_{\epsilon},
    $$ 
    which satisfies the $\rG_2$-instanton condition \eqref{eq:G2.inst.equation} to order $O(\alpha')^2$ with respect to the dual $4$-form \eqref{eq:psi.eps}; i.e.
    \begin{equation}
    \label{eq:order.alpha}
    |R_{\theta}\wedge \psi_{\epsilon}|_{g_{\epsilon}}=O(\alpha')^2\quad\text{as $\alpha'\to 0$.}
    \end{equation}
    \item
    The data $(H_\epsilon,A,\theta)$ satisfy the heterotic Bianchi identity \eqref{eq:anomaly}:
    \begin{equation}
    \label{eq: dH=trF^2-trR^2}
        dH_{\epsilon}
        =\frac{\alpha'}{4} (\tr F_A^2-\tr R_\theta^2).
    \end{equation}
        
    \item
    $\displaystyle\lim_{\alpha'\to0} \epsilon(\alpha')=0 \qandq \lim_{\alpha'\to0} k(\alpha')=\infty$.
\end{enumerate}
\end{theorem}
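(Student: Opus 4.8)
The plan is to construct all the relevant data explicitly in terms of the fixed contact Calabi--Yau geometry $(V,g_V,\omega,\Omega)$ and the free parameters $(\epsilon, k, m, \delta)$, compute both sides of the Bianchi identity \eqref{eq:anomaly} as explicit forms on $K$, and then solve the resulting scalar equation for $(\epsilon, k)$ as functions of $\alpha'$. I would begin with part (i), which Definition~\ref{dfn:G2.str} has essentially set up: a direct computation of $d\varphi_\epsilon$ and $d\psi_\epsilon$ using $d\eta=\omega$, $d\omega=0$, $d\Omega=0$ and $\omega\wedge\Omega=0$ reads off $\tau_2=0$, $\tau_0=\tfrac{3}{7}\cdot\tfrac{\epsilon}{2}$, $\tau_1=0$, and identifies $\tau_3$; matching against \eqref{eq:torsion.conditions} forces $\lambda=\epsilon/2$, $\mu$ constant, and $H_\epsilon=\tfrac{\lambda}{14}\varphi_\epsilon+H^\perp$, which one simplifies to $H_\epsilon=-\epsilon^2\eta\wedge\omega+\epsilon\,\Re\Omega$; then $dH_\epsilon = -\epsilon^2\,\omega\wedge\omega = -2\epsilon^2\,\tfrac{\omega^2}{2}$, a pulled-back $4$-form on $V$ with no $\eta$ component. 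Part (ii) is cited from \cite{Calvo-Andrade2020}: the Levi-Civita connection of a Calabi--Yau metric is HYM, hence its pullback is a $\rG_2$-instanton for $\psi_\epsilon$; one only needs to check $F_A\wedge\psi_\epsilon=0$ directly from $F_A\wedge\omega^2=0$ and $F_A\wedge(\eta\wedge\Im\Omega)=0$, both of which follow from HYM on $V$.

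The technical heart is part (iii) together with the curvature computation feeding (iv). I would take as given Proposition~\ref{prop:twist.curvature} (the explicit description of the $2$-parameter family $\theta_{\epsilon,m}^{\delta,k}$ and its curvature $R_\theta$ as a form on $K$ with coefficients polynomial in $\epsilon, k, m, \delta$), since it is stated earlier. From that one extracts $R_\theta\wedge\psi_\epsilon$ and estimates its $g_\epsilon$-pointwise norm: the metric $g_\epsilon=\epsilon^2\eta\otimes\eta+g_V$ makes $|\eta|_{g_\epsilon}=\epsilon^{-1}$, so norms of forms scale by powers of $\epsilon$ according to their $\eta$-degree, and this is precisely what converts a naive ``$O(\epsilon)$'' error into the claimed ``$O(\alpha')^2$'' once $\epsilon$ is tied to $\alpha'$. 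I would isolate the leading-order obstruction term in $R_\theta\wedge\psi_\epsilon$, show it vanishes for a suitable algebraic choice of $m$ and $\delta$ (these are the ``various parameter choices'' flagged in the bullet points), and bound the remainder by $C\epsilon^{N}$ for an appropriate $N$.

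For (iv) I would compute $\tr F_A\wedge F_A$ (a $4$-form pulled back from $V$, independent of $\epsilon$, equal to a fixed representative of $8\pi^2 p_1(TV)$ up to constants) and $\tr R_\theta\wedge R_\theta$ (from Proposition~\ref{prop:twist.curvature}, a polynomial in the parameters with a distinguished ``horizontal'' piece matching $p_1$ and correction pieces involving $\eta\wedge\omega$, $\omega^2$, etc.). Setting $dH_\epsilon=\tfrac{\alpha'}{4}(\tr F_A^2-\tr R_\theta^2)$ and matching the $\omega^2$-component on $V$ gives one scalar equation of the schematic form $-2\epsilon^2 = \tfrac{\alpha'}{4}(c_1 - P(k,m,\delta,\epsilon))$ where $P$ collects the $R_\theta$ contributions; choosing the parameters so the $p_1$-parts cancel (the omalous condition is automatic here because $E=\pi^*TV$ and $\theta$ is a deformation of the Levi-Civita connection on $TK$, so both Pontryagin classes pull back from $V$ compatibly) reduces this to solving $\epsilon^2 = \tfrac{\alpha'}{8}\,Q(k)$ together with the constraint from (iii) relating $k$ and $\epsilon$; one checks the $\eta$-free components match trivially since $dH_\epsilon$ has none. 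The main obstacle I anticipate is bookkeeping: simultaneously (a) killing the leading $\rG_2$-instanton defect of $\theta$ by the algebraic choice of $(m,\delta)$, (b) making $\tr R_\theta^2 - \tr F_A^2$ cohomologically trivial and in fact equal to the explicit exact form $\tfrac{4}{\alpha'}dH_\epsilon$ at the level of forms (not just cohomology), and (c) checking the two scaling requirements are mutually compatible, i.e. that the system admits a solution with $\epsilon(\alpha')\to 0$ and $k(\alpha')\to\infty$ as in (v) — for which I would solve explicitly, e.g. take $k\sim (\alpha')^{-a}$ and $\epsilon\sim(\alpha')^{b}$ for exponents $a,b>0$ dictated by balancing the dominant terms, and verify the $O(\alpha')^2$ bound in \eqref{eq:order.alpha} holds for that choice. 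Part (v) is then immediate from the explicit formulae for $\epsilon(\alpha')$ and $k(\alpha')$ produced along the way.
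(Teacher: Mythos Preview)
Your overall architecture matches the paper's: compute torsion to get (i), cite the HYM-to-instanton lift for (ii), use the explicit curvature formula for $\theta_{\epsilon,m}^{\delta,k}$ to handle (iii) and (iv), and then solve for $(\epsilon,k)$ in terms of $\alpha'$ to get (v). Two points, however, deserve correction.

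First, for (iii) you plan to ``isolate the leading-order obstruction term in $R_\theta\wedge\psi_\epsilon$, show it vanishes for a suitable algebraic choice of $m$ and $\delta$''. This does not work: the paper shows (Corollary~\ref{cor:inst.delta.m.k}) that $\theta_{\epsilon,m}^{\delta,k}$ is \emph{never} a $\rG_2$-instanton for any $(m,\delta)$ with $k\neq 0$. The three obstruction coefficients $\lambda_1,\lambda_2,\lambda_3$ in \eqref{eq:G2.inst.eps.delta.m.k} are all of size $k^2\epsilon^2$ and cannot be simultaneously killed algebraically. The mechanism is purely scaling: one chooses $(m,\delta)$ only to simplify the anomaly coefficient $\lambda_0$ in \eqref{eq:lambda_0}, and then arranges $k^2\epsilon^2=O(\alpha')^2$ together with the Bianchi relation $\alpha'\sim (k^4\epsilon^2)^{-1}$ (or $\sim(k^3\epsilon^2)^{-1}$ in the $\delta\in\{0,-1\}$ cases). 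Solving these two constraints simultaneously is what forces e.g.\ $k\sim(\alpha')^{-3/2}$, $\epsilon\sim(\alpha')^{5/2}$ and yields (v).

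Second, for (iv) you anticipate having to ``choose the parameters so the $p_1$-parts cancel''. In fact there is no Pontryagin matching to perform: because $R_\theta=F_A+\tfrac{k\epsilon^2(1-\delta+m)}{2}\omega\mathcal{I}+\tfrac{k^2\epsilon^2}{4}Q^\delta_m$, the $F_A^2$ pieces cancel identically in $\tr R_\theta^2-\tr F_A^2$, and the real work is showing that \emph{all} the cross-terms linear in $F_A$ vanish at the level of forms (Lemmas~\ref{lem:trace.1}, \ref{lem:Qdelta.pm.FA}, \ref{lem:tr.FA.Q0}). This uses the $\mathfrak{su}(3)$-structure of $F_A$ and the Bianchi-type identities \eqref{eq: F ^ e and Je} in a non-obvious way. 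Once this is done, $\tr(R_\theta^2-F_A^2)$ is an explicit scalar multiple of $\omega^2$, which is exactly the shape of $dH_\epsilon$, and \eqref{eq: dH=trF^2-trR^2} becomes the single scalar equation you expected. (A minor slip: $\tau_0=\tfrac{6}{7}\epsilon$, not $\tfrac{3}{7}\cdot\tfrac{\epsilon}{2}$; the $\tau_0$-computation goes via $7\tau_0\,\mathrm{vol}_\epsilon=d\varphi_\epsilon\wedge\varphi_\epsilon$.)
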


The various components of the proof are developed throughout the paper, and aggregated in \S\ref{sec: proof of main theorem}. 

\bigskip

\noindent\textbf{Acknowledgements:}
The present article stems from an ongoing Newton Mobility  bilateral collaboration (2019-2021), granted by the UK Royal Society [NMG$\backslash$R1$\backslash$191068].  JDL is also partially supported by the Simons Collaboration on Special Holonomy in Geometry, Analysis, and Physics ($\#724071$ Jason Lotay). HSE has also been funded by the São Paulo Research Foundation (Fapesp)  \mbox{[2017/20007-0]} \& \mbox{[2018/21391-1]} and the Brazilian National Council for Scientific and Technological Development (CNPq)  \mbox{[307217/2017-5]}.
The authors would like to thank Xenia de la Ossa, Eirik Svanes and Mario Garc\'ia-Fern\'andez for several insightful discussions.

\section{Contact Calabi--Yau geometry: scalar field, dilaton, and flux}

One may interpret special structure group reductions on compact odd-dimensional Riemannian manifolds as `transverse even-dimensional' structures with respect to a  $S^1$-action. So for instance contact geometry may be seen as transverse symplectic geometry, almost-contact geometry  as tranverse almost-complex geometry, and in the same way Sasakian geometry as transverse Kähler geometry. In particular, one may consider reduction of the transverse holonomy group; indeed Sasakian manifolds with transverse holonomy $\SU(n)$ are studied by Habib and Vezzoni \cite{Habib2015}*{\S~6.2.1}:
\begin{definition}
\label{def:cCY}
    A Sasakian manifold $(K^{2n+1},\eta,\xi,J,\Omega)$ is said to be a \emph{contact Calabi--Yau manifold} (cCY) if $\Omega$ is a nowhere-vanishing transverse form of horizontal type $(n,0)$, such that 
$$
\Omega\wedge\bar{\Omega}
=(-1)^{\frac{n(n+1)}{2}}\bi^{n}\omega^n
\qandq 
d\Omega=0,
\qwithq
\omega=d\eta.
$$
\end{definition} 

Let us specialise to real dimension $7$. It is well-known that, for a Calabi--Yau 3-fold $(V, \omega, \Omega)$, the product $V\times \rS^1$  has a natural torsion-free  $\rG_2$-structure defined by: $\varphi:= dt\wedge\omega+\Re\Omega,$
where $ t$ is the coordinate on $\rS^1$. The Hodge dual of $\varphi$ is 
\begin{equation}
\label{eq:psi cCY}
    \psi:=\ast\varphi
    =\frac{1}{2}\omega\wedge\omega +dt\wedge\Im\Omega
\end{equation}
and the induced metric $g_\varphi=g_V + dt\otimes dt $ is the Riemannian product metric on $V\times S^1$ with holonomy $\Hol(g_\varphi)=\SU(3)\subset\rG_2$. A contact Calabi--Yau structure essentially emulates all of these features, albeit its $\rG_2$-structure has some symmetric torsion. 

\begin{prop}[{\cite{Habib2015}*{\S6.2.1}}] \label{prop:G2estruturaCCY} 
 Every cCY manifold $(K^7,\eta,\xi,J ,\Omega)$ is an $S^1$-bundle $\pi:K\to V$ over a Calabi--Yau 3-orbifold $(V,\omega,\Omega)$, with connection 1-form $\eta$ and curvature  
\begin{equation}
\label{eq:eta.curv}
    d\eta=\omega,    
\end{equation} and it carries a cocalibrated $\rG_2$-structure
\begin{equation}
\label{eq:G2structure}
    \varphi 
    :=\eta\wedge \omega +\Re\Omega,
\end{equation} 
with torsion $d\varphi= \omega\wedge\omega$   and Hodge dual  $ 4$-form
$ \psi=\ast\varphi = \frac{1}{2}\omega\wedge\omega+ \eta\wedge\Im\Omega$.
\end{prop}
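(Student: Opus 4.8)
The plan is to verify each of the three claims directly from Definition~\ref{def:cCY} and the structure of a Sasakian $S^1$-bundle, reducing everything to the transverse Calabi--Yau geometry on $V$. First I would recall the standard fact (Boothby--Wang type argument, already cited via \cite{Boyer2008}) that a Sasakian manifold whose Reeb foliation has compact leaves fibres as an $S^1$-(orbi)bundle $\pi\colon K\to V$ with $\eta$ a principal connection $1$-form; since the transverse Kähler form $\omega$ is basic and closed, and $d\eta$ is by construction horizontal of type $(1,1)$ with $\iota_\xi d\eta=0$, one identifies $d\eta=\pi^*\omega$, giving \eqref{eq:eta.curv}. The transverse holomorphic volume form $\Omega\in\Omega^{n,0}$ with $d\Omega=0$ and the normalisation $\Omega\wedge\bar\Omega=(-1)^{n(n+1)/2}\bi^n\omega^n$ then descends (for $n=3$) to the Calabi--Yau data $(\omega,\Omega)$ on $V$, matching the normalisation \eqref{eq:volV} after unwinding signs.

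Next I would check that $\varphi:=\eta\wedge\omega+\Re\Omega$ is a genuine $\rG_2$-structure: this is a pointwise linear-algebra statement, where one uses an adapted coframe $(\eta, e^1,\dots,e^6)$ with $\omega=e^{12}+e^{34}+e^{56}$ and $\Omega=(e^1+\bi e^2)\wedge(e^3+\bi e^2)$... more precisely $\Omega=(e^1+\bi e^2)\wedge(e^3+\bi e^4)\wedge(e^5+\bi e^6)$, so that $\varphi$ takes the standard Bryant form and hence defines a $\rG_2$-structure inducing the metric $g=\eta\otimes\eta+g_V$. The torsion computation $d\varphi=d\eta\wedge\omega-\eta\wedge d\omega+d\Re\Omega=\omega\wedge\omega$ uses $d\eta=\omega$, that $\omega$ is closed, and that $\Re\Omega$ is closed (real part of $d\Omega=0$). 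For the Hodge dual, I would compute $*\varphi$ in the adapted coframe: $*(\eta\wedge\omega)=\tfrac12\omega\wedge\omega$ and $*\Re\Omega=\eta\wedge\Im\Omega$ (the latter being the standard $\SU(3)$ identity $*_{g_V}\Re\Omega=\Im\Omega$ on $V$, tensored with $\eta$), yielding $\psi=\tfrac12\omega\wedge\omega+\eta\wedge\Im\Omega$; cocalibration is then $d\psi=\omega\wedge d\eta + d\eta\wedge\Im\Omega - \eta\wedge d\Im\Omega=\omega\wedge\omega+\omega\wedge\Im\Omega$... I should be careful here — one needs $d\eta\wedge\Im\Omega=\omega\wedge\Im\Omega$ to vanish, which holds because $\omega\wedge\Im\Omega$ is a horizontal $5$-form on $V$ and every $5$-form pulled back from a $6$-manifold wedged appropriately... rather, one uses that $\omega\wedge\Omega=0$ on $V$ for type reasons, hence $\omega\wedge\Im\Omega=0$, and $d\Im\Omega=0$, so $d\psi=0$.

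The main obstacle, such as it is, is bookkeeping with orientations and sign conventions: getting $*(\eta\wedge\omega)=+\tfrac12\omega^2$ rather than a sign, and matching the normalisation in Definition~\ref{def:cCY} (which carries the factor $(-1)^{n(n+1)/2}\bi^n$) against \eqref{eq:volV} so that the Calabi--Yau identity $*_{g_V}\Re\Omega=\Im\Omega$ comes out with the right sign. None of this is deep — it is the standard dictionary between $\SU(3)$-structures in dimension $6$ and cocalibrated $\rG_2$-structures in dimension $7$, applied fibrewise over the Reeb foliation — but one must fix the adapted coframe once and compute everything consistently in it. Since the statement is quoted from \cite{Habib2015}*{\S6.2.1}, I would present the proof compactly, indicating the adapted coframe and the three short exterior-derivative computations, and relegating the pointwise $\rG_2$-structure verification to the standard reference.
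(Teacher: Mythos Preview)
The paper does not actually prove this proposition: it is stated with attribution to \cite{Habib2015}*{\S6.2.1} and no proof is given. The closest the paper comes is the family of computations in \S2.1 for the $\epsilon$-rescaled structures $\varphi_\epsilon$ (equations \eqref{eq:diff.eps} and Lemma~\ref{lem:torsion.eps}), which specialise to the claims here at $\epsilon=1$; but the bundle-structure part is simply imported.

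Your approach is the standard one and is correct in outline. One slip: in the cocalibration check you write
\[
d\psi=\omega\wedge d\eta + d\eta\wedge\Im\Omega - \eta\wedge d\Im\Omega=\omega\wedge\omega+\omega\wedge\Im\Omega,
\]
but the first summand is spurious. Since $\psi=\tfrac12\omega^2+\eta\wedge\Im\Omega$, one has $d(\tfrac12\omega^2)=\omega\wedge d\omega=0$, not $\omega\wedge d\eta$; so $d\psi=\omega\wedge\Im\Omega$, which vanishes by $\omega\wedge\Omega=0$ as you then argue. Your conclusion $d\psi=0$ is right, but the displayed line should be corrected. (You may also want to note that the sign on the $\eta\wedge\Im\Omega$ term in $\psi$ is convention-dependent: the paper's $\epsilon$-family in \eqref{eq:psi.eps} uses the opposite sign, reflecting a different orientation choice from the one in the cited proposition.)
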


\begin{example}[Calabi--Yau links for $k=1$]
\label{ex: CY links}
    Given a rational weight vector $w=(w_0,\dots,w_4)\in\Q^{5}$, a $w$-weighted homogeneous polynomial $f\in\C[z_{0}, \dots, z_{4}]$ of degree $d=\sum_{i=0}^4 w_i$ cuts out an affine hypersurface $\cV=(f)$ with an isolated singularity at $0\in\C^5$. 
\begin{multicols}{2}
    Its link \textcolor{black}{$K_f:=\cV\cap S^9\subset\C^5$} on a local $9$-sphere is a compact and $2$-connected smooth cCY $7$-manifold, fibering by circles over a Calabi--Yau $3$-orbifold $V\subset \P^4(w)$ by the weighted Hopf fibration \cite{Calvo-Andrade2020}*{Theorem 1.1}:
\columnbreak
    \[\begin{tikzcd}        
    K_{f}^7 \arrow[hook]{r} \arrow{d}& S^{9} \arrow{d}\\
        V \arrow[hook]{r}& \P^{4}(w)
    \end{tikzcd}\]
\end{multicols}
In particular, $V$ can be assumed to be any of the weighted Calabi--Yau $3$-folds listed by Candelas-Lynker-Schimmrigk \cite{Candelas1990}. For a detailed survey on Calabi--Yau links, see \cite{Calvo-Andrade2020}*{\S2}.
The $\C$-family of Fermat quintics yields but the simplest of instances, and indeed the only one for which the base $V$ is smooth.
\end{example}

\subsection{Torsion forms and flux of the $\rG_2$-structure $\varphi\ep$}

We begin by addressing the heterotic $\rG_2$ system conditions \eqref{eq:torsion.conditions} on the $\rG_2$-structure, as prescribed by \cite{delaOssa2016}.  In particular, we identify the components of the torsion corresponding to the scalar field, the dilaton  and the flux, as asserted in Theorem \ref{thm: Main Theorem}--(i).

We see from  \eqref{eq:phi.eps}, \eqref{eq:psi.eps}, \eqref{eq:eta.curv}, and the fact that $V$ is Calabi--Yau, that
\begin{equation}
\label{eq:diff.eps}
    d\varphi_{\epsilon}
    =\epsilon  \omega^2
    \qandq 
    d\psi_{\epsilon}=0,
\end{equation}
so that the $\rG_2$-structures of Definition \ref{dfn:G2.str} are coclosed. We can now compute their torsion forms.

\begin{lemma}
\label{lem:torsion.eps}
    For each $\epsilon>0$, the $\rG_2$-structure on $K^7$ defined by \eqref{eq:phi.eps}--\eqref{eq:psi.eps} has torsion forms
    $$
    \begin{array}{ll}
        \tau_0=\displaystyle
        \frac{6}{7}\epsilon ,
        & \tau_1=0,\\
        \tau_2=0,
        &\tau_3=\displaystyle
        \frac{8}{7}\epsilon^2\eta \wedge \omega-\frac{6}{7}\epsilon \Re \Omega.
    \end{array}.
    $$
\end{lemma}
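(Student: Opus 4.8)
The strategy is a direct computation using the defining equations $d\varphi_\epsilon=\epsilon\,\omega^2$ and $d\psi_\epsilon=0$ from \eqref{eq:diff.eps}, matched against the torsion decomposition in Definition~\ref{dfn:torsion}. First I would extract $\tau_0$ and $\tau_1$: since $d\psi_\epsilon=0$, the identity $d\psi=4\tau_1\wedge\psi+\tau_2\wedge\varphi$ forces the $\Omega^4_7$-component governed by $\tau_1$ and the $\Omega^4_{14}$-component governed by $\tau_2$ to vanish, hence $\tau_1=0$ and $\tau_2=0$. With $\tau_1=0$, the first structure equation reduces to $d\varphi_\epsilon=\tau_0\psi_\epsilon+\ast\tau_3$, and $\tau_0$ is recovered by taking the $\Omega^4_1$-part, i.e. $\tau_0=\tfrac17\langle d\varphi_\epsilon,\psi_\epsilon\rangle=\tfrac17\ast(d\varphi_\epsilon\wedge\varphi_\epsilon)$; since $d\varphi_\epsilon=\epsilon\,\omega^2$ this is a short contraction using $\omega^3=6\,\vol_V$ and the normalisation \eqref{eq:volV}, which should yield $\tau_0=\tfrac67\epsilon$.

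Next I would isolate $\ast\tau_3 = d\varphi_\epsilon-\tau_0\psi_\epsilon = \epsilon\,\omega^2 - \tfrac67\epsilon\big(\tfrac12\omega^2-\epsilon\,\eta\wedge\Im\Omega\big) = \tfrac47\epsilon\,\omega^2 + \tfrac67\epsilon^2\,\eta\wedge\Im\Omega$, and then apply $\ast_{g_\epsilon}$ to read off $\tau_3$ itself. This requires the Hodge-star rules for the $\epsilon$-metric \eqref{eq:metric.vol.eps}: on horizontal forms pulled back from $V$, $\ast_\epsilon(\beta)=\epsilon\,\eta\wedge\ast_V\beta$, and $\ast_\epsilon(\eta\wedge\gamma)=\tfrac1\epsilon\ast_V\gamma$ (up to signs one must track carefully). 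Using the Calabi--Yau identities $\ast_V\omega=\tfrac12\omega^2$, $\ast_V(\omega^2)=2\omega$, $\ast_V\Re\Omega=\Im\Omega$ and $\ast_V\Im\Omega=-\Re\Omega$, I would compute $\ast_\epsilon(\tfrac47\epsilon\,\omega^2)=\tfrac87\epsilon^2\,\eta\wedge\omega$ and $\ast_\epsilon(\tfrac67\epsilon^2\,\eta\wedge\Im\Omega)=-\tfrac67\epsilon\,\Re\Omega$, giving $\tau_3=\tfrac87\epsilon^2\,\eta\wedge\omega-\tfrac67\epsilon\,\Re\Omega$ as claimed.

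Finally I would verify consistency: check that the computed $\tau_3$ indeed lies in $\Omega^3_{27}$, i.e. $\tau_3\wedge\varphi_\epsilon=0$ and $\tau_3\wedge\psi_\epsilon=0$, which both reduce to Calabi--Yau wedge identities such as $\omega\wedge\Re\Omega=0$ and $\Im\Omega\wedge\Re\Omega=2\,\vol_V$ combined with $\eta\wedge\eta=0$; this also cross-checks the coefficients. The main obstacle is purely bookkeeping: getting the signs and the powers of $\epsilon$ right in the Hodge star on the warped metric $g_\epsilon=\epsilon^2\eta\otimes\eta+g_V$, since $\eta$ carries ``length $\epsilon$'' and so each $\eta$ inserted or removed by $\ast_\epsilon$ contributes a factor $\epsilon^{\pm1}$, and the CY volume normalisation \eqref{eq:volV} must be used consistently. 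No conceptual difficulty is expected beyond this careful tracking.
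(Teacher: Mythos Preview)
Your proposal is correct and follows essentially the same route as the paper: deduce $\tau_1=\tau_2=0$ from $d\psi_\epsilon=0$, extract $\tau_0$ via $7\tau_0\vol_\epsilon=d\varphi_\epsilon\wedge\varphi_\epsilon$, and then Hodge-dualise $d\varphi_\epsilon-\tau_0\psi_\epsilon$ using the warped-metric star. Your additional consistency check that $\tau_3\in\Omega^3_{27}$ is not in the paper but is a harmless extra verification.
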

\begin{proof}
    The fact that $\tau_1$ and $\tau_2$ vanish is an immediate consequence of \eqref{eq:diff.eps}.
    Again by \eqref{eq:diff.eps} and definition of the torsion forms, we have:
    \begin{equation}
    \label{eq:diff.phi.eps}
        d\varphi_{\epsilon}
        =\epsilon  \omega^2
        =\tau_0\psi_\epsilon+*_\epsilon\tau_3.
    \end{equation}
    Thus, using $\omega\wedge\Omega=0$, we find
    \begin{equation}
    \label{eq:tau0.eps.vol}
        7\tau_0\vol_{\epsilon}
        =d\varphi_\epsilon \wedge \varphi_\epsilon 
        =\epsilon  \omega^2\wedge (\epsilon\eta\wedge\omega)
        =6\epsilon (\epsilon\eta\wedge\frac{\omega^3}{3!}).
    \end{equation}
    We further deduce from \eqref{eq:tau0.eps.vol}  and the expression of volume form \eqref{eq:metric.vol.eps}  that
    \begin{equation}
    \label{eq:tau0.eps}
        \tau_0=\frac{6}{7}\epsilon.
    \end{equation}
    Moreover, substituting \eqref{eq:tau0.eps} into \eqref{eq:diff.phi.eps}, we see that
    \begin{align}
    \label{eq:star.tau3}
        *_{\epsilon}\tau_3
        &=d\varphi_\epsilon -\tau_0\psi_{\epsilon}
        =\epsilon  \omega^2 -\frac{6}{7}\epsilon (\frac{1}{2}\omega^2
        -\epsilon\eta \wedge\Im\Omega)
        =\frac{4}{7}\epsilon \omega^2 +\frac{6}{7}\epsilon^2 \eta\wedge\Im\Omega.
    \end{align}
    Therefore, using \eqref{eq:metric.vol.eps} and \eqref{eq:star.tau3} we obtain
    \begin{align*}
        \tau_3
        &=\frac{8}{7}\epsilon \ast_{\epsilon} (\frac{1}{2}\omega^2) +\frac{6}{7} \epsilon  \ast_{\epsilon}(\epsilon\eta\wedge\Im\Omega)
        =\frac{8}{7}\epsilon^2  \eta\wedge\omega -\frac{6}{7}\epsilon \Re\Omega.
        \qedhere
    \end{align*}
\end{proof}

We may compute the flux of the $\rG_2$ structure $\varphi_{\epsilon}$ as follows.

\begin{lemma}
\label{lem:flux}
    In the situation of Lemma \ref{lem:torsion.eps}, the flux of the $\rG_2$ structure $\varphi_{\epsilon}$ is

\begin{equation}
\label{eq:flux}
    H_\epsilon=-\epsilon^2  \eta\wedge\omega+\epsilon  \Re\Omega.
\end{equation}
Hence, 
\begin{equation}\label{eq:flux.deriv}
    dH_{\epsilon}=-\epsilon^2\omega^2.
\end{equation}
\end{lemma}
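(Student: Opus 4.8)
The plan is to read $H_\epsilon$ straight off the torsion relations \eqref{eq:torsion.conditions}, feeding in the torsion forms from Lemma \ref{lem:torsion.eps}. The first point is that Lemma \ref{lem:torsion.eps} gives $\tau_1=0$, so by \eqref{eq:torsion.conditions} the dilaton $\mu$ is (locally) constant; hence the contraction term $-\tfrac12 d\mu^{\#}\lrcorner\psi_\epsilon$ appearing in \eqref{eq:torsion.conditions} vanishes, leaving $H^\perp=-\tau_3$ and $H_\epsilon=\tfrac{\lambda}{14}\varphi_\epsilon\oplus H^\perp$. Here $\oplus$ denotes the orthogonal $\rG_2$-splitting $\Omega^3(K)=\Omega^3_1\oplus\Omega^3_7\oplus\Omega^3_{27}$; the $\Omega^3_7$-component of $H^\perp$ is $-\tfrac12 d\mu^{\#}\lrcorner\psi_\epsilon=0$, and $-\tau_3\in\Omega^3_{27}(K)$ by Definition \ref{dfn:torsion} is orthogonal to $\varphi_\epsilon\in\Omega^3_1$, so in fact $H_\epsilon=\tfrac{\lambda}{14}\varphi_\epsilon-\tau_3$ with no cross-terms.

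It then remains to pin down the coefficient $\tfrac{\lambda}{14}$ and assemble. From $\tau_0=\tfrac37\lambda$ in \eqref{eq:torsion.conditions} and $\tau_0=\tfrac67\epsilon$ in Lemma \ref{lem:torsion.eps} one gets $\tfrac{\lambda}{14}=\tfrac{\tau_0}{6}=\tfrac{\epsilon}{7}$. Substituting $\varphi_\epsilon=\epsilon\eta\wedge\omega+\Re\Omega$ and $\tau_3=\tfrac87\epsilon^2\eta\wedge\omega-\tfrac67\epsilon\Re\Omega$ then yields
\[
H_\epsilon=\tfrac{\epsilon}{7}\bigl(\epsilon\eta\wedge\omega+\Re\Omega\bigr)-\Bigl(\tfrac87\epsilon^2\eta\wedge\omega-\tfrac67\epsilon\Re\Omega\Bigr)=-\epsilon^2\eta\wedge\omega+\epsilon\Re\Omega,
\]
which is \eqref{eq:flux}. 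For \eqref{eq:flux.deriv}, differentiate using the cCY structure equations: $d\eta=\omega$ by \eqref{eq:eta.curv}, while $d\omega=0$ and $d\Re\Omega=0$ since $V$ is Calabi--Yau (the same facts behind \eqref{eq:diff.eps}). Thus $d(\eta\wedge\omega)=\omega\wedge\omega-\eta\wedge d\omega=\omega^2$, whence $dH_\epsilon=-\epsilon^2\,d(\eta\wedge\omega)+\epsilon\,d\Re\Omega=-\epsilon^2\omega^2$.

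There is essentially no obstacle here: the whole argument is the bookkeeping of \eqref{eq:torsion.conditions} once Lemma \ref{lem:torsion.eps} is available. The only things meriting a moment's care are that the $d\mu$-dependent terms genuinely disappear (they do, because $\tau_1=0$) and that $\oplus$ in \eqref{eq:torsion.conditions} is the \emph{orthogonal} decomposition, so that $\tfrac{\lambda}{14}\varphi_\epsilon$ is literally the $\Omega^3_1$-part of $H_\epsilon$ and $H^\perp=-\tau_3$ the remainder, with the identity $d\varphi_\epsilon=\epsilon\omega^2$ from \eqref{eq:diff.eps} providing an independent sanity check against $dH_\epsilon$.
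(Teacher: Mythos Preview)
Your proof is correct and follows essentially the same route as the paper: use the torsion relations \eqref{eq:torsion.conditions} together with Lemma \ref{lem:torsion.eps} to write $H_\epsilon=\tfrac{\tau_0}{6}\varphi_\epsilon-\tau_3$, then substitute and simplify. Your version is slightly more explicit about why the $d\mu$-term drops and about the differentiation leading to \eqref{eq:flux.deriv}, but the argument is the same.
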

\begin{proof}
From Definition \ref{dfn:heterotic.G2} and the Lemma, we compute directly:
\begin{align*}
    H_{\epsilon}
    &=\frac{\lambda}{14}\varphi_{\epsilon} +(H_{\epsilon})^\perp
    =\frac{\tau_0}{6}\varphi_{\epsilon}-\tau_3\\
    &=\frac{1}{7}\epsilon(\epsilon\eta\wedge\omega+\Re\Omega)-(\frac{8}{7}\epsilon^2\eta\wedge\omega-\frac{6}{7}\epsilon\Re\Omega)\\
    &=-\epsilon^2\eta\wedge\omega+\epsilon\Re\Omega.
    \qedhere
\end{align*}
\end{proof}

\subsection{Local orthonormal coframe}

One key strategy in our construction consists in varying the length of the $S^1$-fibres on 
$K$ as a function of the string parameter $\alpha'$. With that in mind, we adopt a useful local orthonormal coframe  as follows.

\begin{definition}
\label{dfn:coframe}  
    Given $\epsilon>0$, let $(K^7,\varphi\ep)$ be as in Definition \ref{dfn:G2.str}.
    We choose the local Sasakian real orthonormal coframe on $K$:
\begin{equation}\label{eq:coframe}
    e_0=\epsilon\eta,\quad e_1,\quad e_2,\quad e_3, \quad Je_1, \quad Je_2, \quad Je_3,
\end{equation}
where $J$ is the transverse complex structure (from the Calabi--Yau 3-fold $V$) acting on 1-forms, and we have a basic $\SU(3)$-coframe $\{e_1,e_2,e_3,Je_1,Je_2,Je_3\}$, the pullback of an $\SU(3)$-coframe on $V$, such that
\begin{align}
    \omega
    &=e_1\wedge Je_1+e_2\wedge Je_2+e_3\wedge Je_3,
    \label{eq:omega.coframe}\\
    \Omega
    &=(e_1+iJe_1)\wedge (e_2+iJe_2)\wedge (e_3+iJe_3).
    \label{eq:Omega.coframe}
\end{align}
\end{definition}

\begin{remark}It is worth noting from \eqref{eq:Omega.coframe} that
\begin{align}
    \Re\Omega &=e_1\wedge e_2\wedge e_3-e_1\wedge Je_2\wedge Je_3
    -e_2\wedge Je_3\wedge Je_1-e_3\wedge Je_1\wedge Je_2,\label{eq:Re.Omega.coframe}\\
    \Im\Omega&=Je_1\wedge e_2\wedge e_3+Je_2\wedge e_3\wedge e_1+Je_3\wedge e_1\wedge e_2-Je_1\wedge Je_2\wedge  Je_3.\label{eq:Im.Omega.coframe}
\end{align}
\end{remark}

Using \eqref{eq:omega.coframe} and \eqref{eq:Im.Omega.coframe}, we easily derive the precise expression of $\psi_\epsilon$ in this frame: 
\begin{align}
\label{eq:psi.eps.frame}
\begin{split}
    \psi_{\epsilon}=\frac{1}{2}\omega^2-\epsilon\eta\wedge\Im\Omega&=e_2\wedge Je_2\wedge e_3\wedge Je_3+e_3\wedge Je_3\wedge e_1\wedge Je_1+e_1\wedge Je_1\wedge e_2\wedge Je_2\\
    &\quad-e_0\wedge(Je_1\wedge e_2\wedge e_3+Je_2\wedge e_3\wedge e_1+Je_3\wedge e_1\wedge e_2-Je_1\wedge Je_2\wedge  Je_3).
\end{split}
\end{align}

\begin{lemma}
\label{lem:basic}
    In terms of the local coframe \eqref{eq:coframe} and the natural matrix operations described in Appendix \ref{sec: appendix - matrix operations}, the basic $3$-covectors $e=(e_1,e_2,e_3)^\rT$ and $Je=(Je_1,Je_2,Je_3)^\rT$ in $\Omega^1(K)^{\oplus3}$ have the following properties.
\begin{itemize}
    \item[(a)] The vectors
\begin{equation*}
    e\times Je\quad\text{and}\quad e\times e-Je\times Je
\end{equation*}
consist of basic forms of type $(2,0)+(0,2)$.
\item[(b)] The vector
\begin{equation*}
    e\times e+ Je\times Je
\end{equation*}
and the off-diagonal part of
\begin{equation}\label{eq:off.diag}
    [e]\wedge [Je]-[Je]\wedge [e]
\end{equation}
consist of basic forms of type $(1,1)$ which are also primitive (i.e.~wedge with $\omega^2$ to give zero).  The diagonal part of \eqref{eq:off.diag} consists of basic forms of type $(1,1)$.
\end{itemize}
\end{lemma}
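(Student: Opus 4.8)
\textbf{Proof plan for Lemma \ref{lem:basic}.}

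The plan is to reduce everything to elementary linear algebra on the basic $\SU(3)$-coframe of $V$, exploiting the fact that $e_j + iJe_j$ spans the $(1,0)$-part and $e_j - iJe_j$ the $(0,1)$-part of the complexified cotangent bundle. First I would fix notation: write $\zeta_j := e_j + iJe_j$, so that $\Omega = \zeta_1\wedge\zeta_2\wedge\zeta_3$ and $\omega = \frac{i}{2}\sum_j \zeta_j\wedge\bar\zeta_j$ from \eqref{eq:omega.coframe}. The entries of the product vectors in the statement are then quadratic expressions in the $e_j, Je_j$, and the type decomposition of each two-form follows by expressing it in terms of $\zeta_j\wedge\zeta_k$ (type $(2,0)$), $\bar\zeta_j\wedge\bar\zeta_k$ (type $(0,2)$) and $\zeta_j\wedge\bar\zeta_k$ (type $(1,1)$). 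Concretely, the components of $e\times Je$ are of the form $e_j\wedge Je_k - e_k\wedge Je_j$ (for the cross product, appropriately indexed via the matrix conventions of Appendix~\ref{sec: appendix - matrix operations}), and one checks that each such combination, together with the corresponding entry of $e\times e - Je\times Je$ which has the form $e_j\wedge e_k - Je_j\wedge Je_k$, lies in the span of $\{\Re(\zeta_j\wedge\zeta_k), \Im(\zeta_j\wedge\zeta_k)\}$; this gives part (a).

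For part (b), the entry of $e\times e + Je\times Je$ is $e_j\wedge e_k + Je_j\wedge Je_k$, and the off-diagonal entries of $[e]\wedge[Je]-[Je]\wedge[e]$ are of the form $e_j\wedge Je_k - Je_j \wedge e_k = e_j\wedge Je_k + e_k\wedge Je_j$ for $j\neq k$; in both cases a short computation with the $\zeta_j$ shows these are real combinations of $\zeta_j\wedge\bar\zeta_k$ and $\zeta_k\wedge\bar\zeta_j$, hence of type $(1,1)$. Primitivity is then the assertion that wedging with $\omega^2$ gives zero, equivalently (since we are on a $6$-dimensional transverse space) that contraction with $\omega$ vanishes; because these forms involve only $\zeta_j\wedge\bar\zeta_k$ with $j\neq k$, their $\omega$-trace $\sum_\ell$-contraction vanishes term by term, giving primitivity. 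The diagonal part of \eqref{eq:off.diag} has entries $e_j\wedge Je_j$, which are manifestly of type $(1,1)$ (but not primitive, since $\sum_j e_j\wedge Je_j = \omega$), so no primitivity claim is made there and nothing further is needed.

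The only mildly delicate point — and the one I would be most careful about — is bookkeeping: matching the informal ``cross product'', ``$[e]\wedge[Je]$'' and ``off-diagonal/diagonal part'' notation against the precise matrix operations fixed in Appendix~\ref{sec: appendix - matrix operations}, so that the index patterns I claimed above are the correct ones. Once the conventions are pinned down, each assertion is a one-line check in the $\zeta_j$-basis, and the symmetry/antisymmetry of the index pattern (symmetric in $j,k$ versus antisymmetric, diagonal versus off-diagonal) is exactly what dictates the resulting type. I would organize the write-up as: (1) recall the $\zeta_j$ notation and the $(p,q)$-decomposition of $\Lambda^2$; (2) unwind the matrix operations to list the entries of each of the four vectors/matrices in the statement; (3) read off types and primitivity by inspection. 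No analytic input is required — this is purely representation-theoretic linear algebra over a point of $V$, extended to basic forms on $K$ by pullback.
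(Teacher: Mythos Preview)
Your approach is essentially identical to the paper's: both arguments introduce the complex coframe $\zeta_j=e_j+iJe_j$ and read off the Hodge types of each entry by recognising them as the real or imaginary parts of $\zeta_j\wedge\zeta_k$ (type $(2,0)+(0,2)$) or of $\zeta_j\wedge\bar\zeta_k$ (type $(1,1)$), with primitivity following because only $j\neq k$ terms occur.

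One small bookkeeping slip, exactly of the kind you flagged: the diagonal entries of $[e]\wedge[Je]-[Je]\wedge[e]$ are \emph{not} $e_j\wedge Je_j$. Using the appendix identity $[e]\wedge[Je]-[Je]\wedge[e]=e\wedge Je^{\rT}-Je\wedge e^{\rT}-2\omega I$, the $(j,j)$-entry is $2e_j\wedge Je_j-2\omega=-2\sum_{k\neq j}e_k\wedge Je_k$. This does not affect your conclusion (these are still basic $(1,1)$-forms, and still not primitive), but you should correct the stated form of the entries in the final write-up.
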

\begin{proof}
For (a), we notice that 
\begin{align*}
    e_2\wedge Je_3-e_3\wedge Je_2&=\Im((e_2+iJe_2)\wedge(e_3+iJe_3)),\\
    e_2\wedge e_3-Je_2\wedge Je_3&=\Re((e_2+iJe_2)\wedge (e_3+iJe_3)).
\end{align*}
We deduce that $e\times Je$ and $e\times e-Je\times Je$ consist of basic forms of type $(2,0)+(0,2)$ as claimed.

For (b), we observe that 
\begin{equation*}
e_2\wedge e_3+Je_2\wedge Je_3=\Re((e_2+iJe_2)\wedge (e_3-iJe_3)),    
\end{equation*}
and hence $e\times e+Je\times Je$ consists of primitive forms of basic type $(1,1)$.  We now note that
\begin{equation}\label{eq:box.e.Je.identity}
    [e]\wedge [Je]-[Je]\wedge [e]=e\wedge Je^{\rm T}-Je\wedge e^{\rm T}-2\omega I
\end{equation}
by Lemma \ref{lem: computational rules}.  Since
\begin{align*}
    e_2\wedge Je_3+e_3\wedge Je_2
    =\Im((e_2-iJe_2)\wedge (e_3+iJe_3)),
\end{align*}
we deduce that the off-diagonal part of $[e]\wedge [Je]-[Je]\wedge [e]$ consists of forms of basic type $(1,1)$ which are primitive also.  Finally, we now see from \eqref{eq:box.e.Je.identity} that the diagonal entries in $[e]\wedge [Je]-[Je]\wedge [e]$ define the diagonal matrix
\begin{equation}\label{eq:diag}
    -2\,\text{diag}(e_2\wedge Je_2+e_3\wedge Je_3,e_3\wedge Je_3+e_1\wedge Je_1,e_1\wedge Je_1+e_2\wedge Je_2),
\end{equation}
which clearly consists of basic forms of type $(1,1)$.
\end{proof}

\section{Gauge fields: \texorpdfstring{G\textsubscript{2}}{G2}-instanton, Bismut, Hull and twisted connections}

It is well-known that the pullback of a basic HYM connection to the total space of a contact Calabi--Yau (cCY) $7$-manifold is a $\rG_2$-instanton, with respect to the standard $\rG_2$-structure \cite{Calvo-Andrade2020}*{\S4.3}. Since the Levi-Civita connection of the Calabi--Yau $(V,g_V)$ on $TV$ is HYM, the following result establishes Theorem \ref{thm: Main Theorem}--(ii).

\begin{lemma}\label{lem:A} Let $E=\pi^*TV$ be the pullback of $TV$ to $K$ via the projection $\pi:K\to V$.  Let $A$ be the connection on $E$ given by the pullback of the Levi-Civita connection of $g_V$.  Then $A$ is a $\rG_2$-instanton on $E$ with holonomy contained in $\mathrm{SU}(3)$.
\end{lemma}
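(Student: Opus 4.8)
The plan is to unwind the definitions and reduce to the well-known fact cited from \cite{Calvo-Andrade2020}*{\S4.3}. First I would recall that, since $(V,g_V,\omega,\Omega)$ is Calabi--Yau, the Levi-Civita connection $\Gamma_V$ on $TV$ has holonomy in $\SU(3)$; in particular its curvature $F_{\Gamma_V}$ is of type $(1,1)$ and primitive, i.e.\ $F_{\Gamma_V}$ satisfies the Hermitian Yang--Mills condition \eqref{eq: tHYM} with respect to $\omega$ (the $(0,2)$-part vanishes because the connection is the Chern connection of a Kähler metric, and the trace-free/primitive part because $\Ric=0$ forces $\Lambda_\omega F_{\Gamma_V}$ to be the Ricci form, which vanishes). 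Pulling back along $\pi$, the curvature of $A=\pi^*\Gamma_V$ is $F_A=\pi^*F_{\Gamma_V}$, which is a basic $2$-form, and the holonomy of $A$ is still contained in $\SU(3)$ since parallel transport along any loop in $K$ projects to parallel transport along its image in $V$.

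Next I would verify the $\rG_2$-instanton equation $F_A\wedge\psi_\epsilon=0$ directly in the coframe of Definition~\ref{dfn:coframe}. Writing $\psi_\epsilon=\tfrac12\omega^2-e_0\wedge\Im\Omega$ as in \eqref{eq:psi.eps.frame}, and using that $F_A$ is basic (no $e_0$ component), the term $F_A\wedge(e_0\wedge\Im\Omega)$ involves $F_A\wedge\Im\Omega$, a basic $4$-form of type that pairs a $(1,1)$-form with a $(3,0)+(0,3)$-form on a $3$-fold, hence vanishes for degree/type reasons; while $F_A\wedge\tfrac12\omega^2=0$ is precisely the primitivity (HYM) condition $\Lambda_\omega F_A=0$ recast via $\beta\wedge\tfrac{\omega^2}{2}=(\Lambda_\omega\beta)\,\vol_V$ for a primitive $(1,1)$-form $\beta$ on $V$. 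Combining the two, $F_A\wedge\psi_\epsilon=0$.

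Alternatively, and more economically, I would simply invoke Proposition~\ref{prop:G2estruturaCCY} together with \cite{Calvo-Andrade2020}*{\S4.3}: the pullback of a basic HYM connection on a bundle over the Calabi--Yau base of a cCY $7$-manifold is a $\rG_2$-instanton for the standard cCY $\rG_2$-structure, whose dual $4$-form is $\tfrac12\omega^2+\eta\wedge\Im\Omega$; since $\psi_\epsilon$ differs from this only by rescaling $\eta\mapsto\epsilon\eta$ (equivalently $e_0=\epsilon\eta$), and $F_A$ has no $e_0$-component, the instanton condition is insensitive to $\epsilon$ and holds for every $\epsilon>0$. The only mild subtlety — the point worth spelling out — is the orbifold aspect: $V$ is in general a Calabi--Yau $3$-orbifold rather than a manifold, so one must observe that the HYM property of $\Gamma_V$ and the type-decomposition arguments are local and pass through orbifold charts unchanged, and that $\pi:K\to V$ is an orbibundle with $K$ smooth, so $A$ is a genuine smooth connection on the smooth bundle $E\to K$. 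With these remarks the lemma follows, and this establishes Theorem~\ref{thm: Main Theorem}--(ii). $\qed$
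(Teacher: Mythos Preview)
Your proposal is correct and essentially matches the paper's approach: the paper does not give a separate proof of this lemma at all, but justifies it in the preceding sentence by noting that the Levi-Civita connection of a Calabi--Yau metric is HYM and then invoking \cite{Calvo-Andrade2020}*{\S4.3}, exactly as in your second (``more economical'') paragraph. Your first paragraph, the direct type-and-primitivity verification that $F_A\wedge\psi_\epsilon=0$, and your remark that the instanton condition is insensitive to the rescaling $\eta\mapsto\epsilon\eta$ because $F_A$ is basic, are not in the paper but are correct and useful elaborations; likewise your observation about the orbifold charts is a point the paper leaves implicit.
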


In this section we give formulae for the connections $\theta_{\epsilon,m}^{\delta,k}$ and $A$ and their curvatures with respect to the local coframe in Definition \ref{dfn:coframe}.  Using the freedom given by all three parameters, we will show that $\theta_{\epsilon,m}^{\delta,k}$ can be chosen to satisfy the $\rG_2$-instanton condition, at least to higher orders of the string scale $\alpha'$.

\subsection{The \texorpdfstring{G\textsubscript{2}}{G2}-instanton \texorpdfstring{$A$}{A} and the ``squashings'' \texorpdfstring{$\theta_{\epsilon}^{k}$}{theta^k_epsilon} of the Levi-Civita connection 
}

\subsubsection{Local connection matrices 
}

Since the choice of a local Sasakian coframe on $K$ naturally trivialises $E=\pi^*TV \hookrightarrow TK$, we now want to relate the local matrix of the Levi-Civita connection $\theta_{\epsilon}$ on $TK$ to (the pullback of) the gauge field $A$.  To that end, we compute the first structure equations of our natural coframe:
\begin{prop}
\label{prop:str.eqs} 
    The coframe \eqref{eq:coframe} on $K$  satisfies the following structure equations:
    \begin{align}
        de_0 &=\epsilon\omega=\epsilon(e_1\wedge Je_1+e_2\wedge Je_2+e_3\wedge Je_3),\label{eq:str.eqs.1}\\
        de_i&=-a_{ij}\wedge e_j-b_{ij}\wedge Je_j,\label{eq:str.eqs.1.e}\\
        d(Je_i)&=b_{ij}\wedge e_j-a_{ij}\wedge Je_j,\label{eq:str.eqs.1.Je}
    \end{align}
    for some local 1-forms $a_{ij},b_{ij}$, using the summation convention, with $1\leq i,j\leq3$.  Moreover,
    \begin{equation}
        a_{ji}=-a_{ij},\quad b_{ji}=b_{ij},\quad \sum_{i=1}^3b_{ii}=0,
    \end{equation}
    so the matrix  $a:=(a_{ij})$ is skew-symmetric, and the matrix $b:=(b_{ij})$ is symmetric traceless. Letting  $I:=(\delta_{ij})$ and $e:=(e_1\; e_2\; e_3)^{\rm T}$, the structure equations \eqref{eq:str.eqs.1}--\eqref{eq:str.eqs.1.Je} can be written in terms of $7\times7$ matrices: 
    \begin{equation}
    \label{eq:str.eqs.matrix}
    d\left(\begin{array}{c} 
        e_0\\ e\\ Je
    \end{array}\right)
    =
    -\left(\begin{array}{ccc} 
        0 & \frac{\epsilon}{2}Je^{\rm T} & -\frac{\epsilon}{2}e^{\rm T}\\
        -\frac{\epsilon}{2}Je & a & b-\frac{\epsilon}{2}e_0I\\
        \frac{\epsilon}{2}e & -b+\frac{\epsilon}{2}e_0I & a
    \end{array}\right)
    \wedge 
    \left(\begin{array}{c} 
        e_0\\ e\\ Je
    \end{array}\right).
\end{equation}
\end{prop}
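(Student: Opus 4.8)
\noindent\emph{Proof strategy.}
All three structure equations concern exterior derivatives of $e_0=\epsilon\eta$ and of $e_1,e_2,e_3,Je_1,Je_2,Je_3$; the latter are pulled back from $V$, and since $d$ commutes with $\pi^*$, the equation \eqref{eq:str.eqs.1} for $e_0$ is immediate from $d\eta=\omega$ (Proposition \ref{prop:G2estruturaCCY}) and \eqref{eq:omega.coframe}, while it suffices to prove \eqref{eq:str.eqs.1.e}--\eqref{eq:str.eqs.1.Je} on the Calabi--Yau base $V$ in the local $\SU(3)$-coframe $\{e_i,Je_i\}$ normalised by \eqref{eq:omega.coframe}--\eqref{eq:Omega.coframe}.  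The one substantive input is that, $V$ being Calabi--Yau, the $\SU(3)$-structure determined by $(\omega,\Omega)$ is parallel for the Levi-Civita connection of $g_V$ (holonomy $\subset\SU(3)$); hence that connection reduces to this $\SU(3)$-structure, and its local connection $1$-form matrix $\Gamma$ in the coframe $\{e_i,Je_i\}$ takes values in $\su(3)\subset\so(6)$.

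Identifying $\R^6=\C^3$ via $J$, the subalgebra $\mathfrak{u}(3)\subset\so(6)$ (the commutant of $J$) consists precisely of block matrices $\left(\begin{smallmatrix} a & b\\ -b & a\end{smallmatrix}\right)$ with $a^{\rm T}=-a$ and $b^{\rm T}=b$, and $\su(3)$ is cut out by the extra condition $\tr b=0$.  Writing $\Gamma$ in this shape, with entries $a_{ij}$ and $b_{ij}$ local $1$-forms on $V$ (pulled back to $K$), and substituting into the torsion-free first structure equation $d(\text{coframe})=-\Gamma\wedge(\text{coframe})$ --- torsion-free because $g_V$ is K\"ahler --- I read off exactly $de_i=-a_{ij}\wedge e_j-b_{ij}\wedge Je_j$ and $d(Je_i)=b_{ij}\wedge e_j-a_{ij}\wedge Je_j$, along with the claimed symmetries $a^{\rm T}=-a$, $b^{\rm T}=b$, $\tr b=0$.  (One could equally argue via the Chern connection in the unitary coframe $\zeta_j:=e_j+iJe_j$: K\"ahlerness gives a first structure equation $d\zeta_j=-\varpi_{jk}\wedge\zeta_k$ with $\varpi=(\varpi_{jk})$ skew-Hermitian, and writing $\varpi_{jk}=a_{jk}-ib_{jk}$ and separating real and imaginary parts recovers the two displayed equations, with $d\Omega=0$ forcing $\tr\varpi=0$, i.e.\ $\tr b=0$.)

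Finally I would package the four resulting scalar identities into the $7\times7$ matrix form \eqref{eq:str.eqs.matrix} and verify it is a faithful reformulation: the $e_0$-row reproduces $de_0=\epsilon\omega$ because $\tfrac{\epsilon}{2}Je^{\rm T}\wedge e=-\tfrac{\epsilon}{2}\omega$ and $-\tfrac{\epsilon}{2}e^{\rm T}\wedge Je=-\tfrac{\epsilon}{2}\omega$ (together with the overall minus sign), while in each of the $e$- and $Je$-rows the two terms involving $e_0$ cancel by antisymmetry of the wedge product ($Je_i\wedge e_0=-e_0\wedge Je_i$ and $e_i\wedge e_0=-e_0\wedge e_i$), leaving precisely \eqref{eq:str.eqs.1.e}--\eqref{eq:str.eqs.1.Je}; skew-symmetry of the $7\times7$ matrix is then visible directly from $a^{\rm T}=-a$ and $b^{\rm T}=b$.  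I do not expect a genuine obstacle here: the only real content is that Calabi--Yau forces $\Gamma\in\su(3)$, and the rest is sign- and index-bookkeeping.  The delicate point is merely to fix the conventions for the first structure equation and for the real/imaginary split consistently, so that $b$ comes out symmetric traceless with signs matching \eqref{eq:str.eqs.1.e}--\eqref{eq:str.eqs.1.Je} exactly; an inconsistent choice would only flip $b\mapsto-b$, not change its symmetry type.
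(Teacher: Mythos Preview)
Your proposal is correct and follows essentially the same approach as the paper: both derive \eqref{eq:str.eqs.1} directly from $e_0=\epsilon\eta$ and $d\eta=\omega$, and both obtain \eqref{eq:str.eqs.1.e}--\eqref{eq:str.eqs.1.Je} together with the symmetry constraints on $a,b$ from the fact that the Levi-Civita connection of the Calabi--Yau $V$ has holonomy in $\SU(3)$ (equivalently, $J$ is parallel and the structure is torsion-free). Your write-up simply spells out the linear-algebra identification of $\su(3)\subset\so(6)$ and the verification of the $7\times7$ matrix form in more detail than the paper's terse argument.
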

\begin{proof}
The first equation \eqref{eq:str.eqs.1} is a direct consequence of \eqref{eq:coframe} and \eqref{eq:omega.coframe}.
The relationship between the derivatives of $e_i$ and $Je_i$ and the properties of the $a_{ij}$ and $b_{ij}$  are a consequence of $J$ being covariantly constant (on $V$) and $A$ 
having holonomy contained in $\SU(3)$, since $A$ arises from a torsion-free $\SU(3)$-structure.  
\end{proof}


It will be useful later to have the following corollary of the structure equations, which is an elementary computation using \eqref{eq:str.eqs.matrix}.

\begin{prop}\label{prop:str.eqs.2}
Using the notation of Definition \ref{dfn:box.vector}, the coframe in Definition \ref{dfn:coframe} satisfies
\begin{equation}\label{eq:str.eqs.2}
\begin{split}
d([e])&=-a\wedge [e]-[e]\wedge a+b\wedge [Je]-[Je]\wedge b,\\
d([Je])&=-a\wedge [Je]-[Je]\wedge a-b\wedge [e]+[e]\wedge b.
\end{split}    
\end{equation}
\end{prop}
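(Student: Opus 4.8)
The plan is to derive the two matrix identities in \eqref{eq:str.eqs.2} by taking exterior derivatives entrywise in the $\Box$-vector notation of Definition \ref{dfn:box.vector}, and feeding in the scalar structure equations \eqref{eq:str.eqs.1.e}--\eqref{eq:str.eqs.1.Je} together with the algebraic constraints $a_{ji}=-a_{ij}$, $b_{ji}=b_{ij}$ already established in Proposition \ref{prop:str.eqs}. Recall that $[e]$ is the $3\times 3$ matrix of $2$-forms whose $(i,j)$ entry is (some normalization of) $e_i\wedge e_j$, and similarly $[Je]$ has entries built from $Je_i\wedge Je_j$; the precise conventions are fixed in Definition \ref{dfn:box.vector} and Lemma \ref{lem: computational rules}, which I would invoke rather than re-derive.

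First I would compute $d(e_i\wedge e_j)=de_i\wedge e_j-e_i\wedge de_j$ and substitute \eqref{eq:str.eqs.1.e}, obtaining a sum of terms of the schematic form $a_{ik}\wedge e_k\wedge e_j$, $b_{ik}\wedge Je_k\wedge e_j$, and the two analogous terms with $i,j$ swapped. Grouping these into matrix products, the $a$-terms assemble into $-a\wedge[e]-[e]\wedge a$ (using skew-symmetry of $a$ to get the signs and the transpose placement right), while the $b$-terms assemble into the mixed expression $b\wedge[Je]-[Je]\wedge b$; the symmetry $b_{ji}=b_{ij}$ is what makes the off-diagonal bookkeeping consistent. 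The computation for $d([Je])$ is entirely parallel, using \eqref{eq:str.eqs.1.Je} in place of \eqref{eq:str.eqs.1.e}; the sign flip on the $b$-coupling there ($d(Je_i)=+b_{ij}\wedge e_j-a_{ij}\wedge Je_j$) is exactly what produces the $-b\wedge[e]+[e]\wedge b$ on the right-hand side. One subtlety worth flagging explicitly: since the entries of $[e]$ and $[Je]$ are $2$-forms, commuting a $1$-form past them costs no sign, which is why the products appear in the symmetric "bracket-like" pattern $X\wedge M - M\wedge X$ rather than a graded commutator with extra signs; I would cite the relevant computational rules from Lemma \ref{lem: computational rules} for this.

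I do not anticipate a genuine obstacle here — the statement is, as the paper says, "an elementary computation using \eqref{eq:str.eqs.matrix}." The only place one can go wrong is in index/transpose bookkeeping: making sure that the left-multiplication versus right-multiplication by $a$ and $b$ is assigned correctly, and that the traceless condition $\sum b_{ii}=0$ (though not strictly needed for these two identities) is not accidentally invoked. The cleanest route, which I would follow, is to read off the relevant $6\times 6$ lower-right block of \eqref{eq:str.eqs.matrix} as a single connection matrix $\begin{pmatrix} a & b-\tfrac{\epsilon}{2}e_0 I\\ -b+\tfrac{\epsilon}{2}e_0 I & a\end{pmatrix}$ acting on $(e,Je)^{\rm T}$, apply the general Leibniz rule for $d$ of a wedge product of column vectors in $\Box$-notation (Lemma \ref{lem: computational rules}), and observe that the $\tfrac{\epsilon}{2}e_0 I$ pieces cancel against each other in the combinations $e\times e$ and $Je\times Je$ that define $[e]$ and $[Je]$, leaving precisely the $a,b$-terms quoted. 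This reduces the proof to one invocation of the product rule plus cancellation, with no case analysis.
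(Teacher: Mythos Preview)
Your proposal rests on a misreading of the notation. In Definition~\ref{dfn:box.vector}, $[e]$ is the $3\times 3$ skew matrix of \emph{one}-forms
\[
[e]=\begin{pmatrix} 0 & e_3 & -e_2\\ -e_3 & 0 & e_1\\ e_2 & -e_1 & 0\end{pmatrix},
\]
not a matrix of two-forms with entries $e_i\wedge e_j$. So the computation you propose, namely expanding $d(e_i\wedge e_j)$, is not what is being asked: $d([e])$ simply has entries $\pm de_k$, and the task is to check that these match the $(i,j)$ entries of $-a\wedge[e]-[e]\wedge a+b\wedge[Je]-[Je]\wedge b$. Your remark that ``the entries of $[e]$ and $[Je]$ are $2$-forms, so commuting a $1$-form past them costs no sign'' is therefore based on the wrong premise; all the wedge products here are between matrices of $1$-forms, and signs do matter.

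A second concrete error: you assert that the traceless condition $\sum_i b_{ii}=0$ is ``not strictly needed for these two identities.'' In fact it is essential. For example, the $(1,2)$ entry of $d([e])$ is $de_3=-a_{3j}\wedge e_j-b_{3j}\wedge Je_j$, whereas the $(1,2)$ entry of $b\wedge[Je]-[Je]\wedge b$ works out to $(b_{11}+b_{22})\wedge Je_3-b_{13}\wedge Je_1-b_{32}\wedge Je_2$; only after using $b_{11}+b_{22}=-b_{33}$ (and the symmetry $b_{ij}=b_{ji}$) does this become $-b_{3j}\wedge Je_j$. The correct route is exactly the one the paper indicates: differentiate each entry $\pm e_k$ of $[e]$ using \eqref{eq:str.eqs.1.e}--\eqref{eq:str.eqs.1.Je}, then verify the matrix identity entrywise, invoking skewness of $a$, symmetry of $b$, \emph{and} tracelessness of $b$. (There is no $\tfrac{\epsilon}{2}e_0$ cancellation to worry about at this stage --- those terms already cancel in passing from \eqref{eq:str.eqs.matrix} to the scalar equations \eqref{eq:str.eqs.1.e}--\eqref{eq:str.eqs.1.Je}.)
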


The matrix in \eqref{eq:str.eqs.matrix} represents the Levi-Civita connection $\theta_{\epsilon}$ in the given local coframe, and setting $\epsilon=0$ in that matrix gives the matrix of $A$.  Hence, we have the following.
\begin{cor}
\label{cor:conn.eps} 
    If we let 
\begin{equation}
\label{eq:A}
    A=\left(\begin{array}{ccc} 
        0 & 0 & 0\\
        0 & a & b\\
        0 & -b & a
    \end{array}\right)
    \end{equation}
    and
\begin{equation}
\label{eq:B}
    B= 
    \left(\begin{array}{ccc} 
        0 & Je^{\rm T} & -e^{\rm T}\\
        -Je &0 & -e_0I\\
        e & e_0I & 0
    \end{array}\right),
\end{equation}
    then the Levi-Civita connection $\theta_{\epsilon}$ of the metric $g_{\epsilon}$ in \eqref{eq:metric.vol.eps} is given locally by
\begin{align*}
    \theta_{\epsilon}
    &=\left(\begin{array}{ccc} 
        0 & \frac{\epsilon}{2}Je^{\rm T} & -\frac{\epsilon}{2}e^{\rm T}\\
        -\frac{\epsilon}{2}Je & a & b-\frac{\epsilon}{2}e_0I\\
        \frac{\epsilon}{2}e & -b+\frac{\epsilon}{2}e_0I & a
    \end{array}\right)
    =A+\frac{\epsilon}{2}B.
\end{align*}
\end{cor}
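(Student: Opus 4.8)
The plan is to recognise that the $7\times 7$ matrix $\Theta_\epsilon$ appearing on the right-hand side of the matrix structure equations \eqref{eq:str.eqs.matrix} is \emph{already} the local connection matrix of the Levi-Civita connection $\theta_\epsilon$ in the coframe \eqref{eq:coframe}, and then to read off the decomposition $\theta_\epsilon = A + \tfrac{\epsilon}{2}B$ by inspection of the blocks. First I would recall the two characterising properties of the Levi-Civita connection: it is the unique affine connection that is torsion-free and compatible with $g_\epsilon$. Since the coframe \eqref{eq:coframe} is $g_\epsilon$-orthonormal by \eqref{eq:metric.vol.eps} (and Definition \ref{dfn:G2.str}), these translate, respectively, into (i) the first Cartan structure equation $d(\text{coframe}) = -\Theta \wedge (\text{coframe})$ with no torsion term, and (ii) skew-symmetry of the connection matrix $\Theta$.

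Property (i) holds for $\Theta_\epsilon$ by Proposition \ref{prop:str.eqs}, essentially by construction: \eqref{eq:str.eqs.matrix} \emph{is} that equation. For (ii) I would check skew-symmetry block by block, using $a^{\rm T} = -a$ and $b^{\rm T} = b$ from Proposition \ref{prop:str.eqs}: the diagonal blocks are $0$, $a$, $a$, all skew; the $(1,2)$ and $(2,1)$ blocks are $\tfrac{\epsilon}{2}Je^{\rm T}$ and $-\tfrac{\epsilon}{2}Je$, negative transposes of one another, and likewise $-\tfrac{\epsilon}{2}e^{\rm T}$ and $\tfrac{\epsilon}{2}e$ for $(1,3)$ and $(3,1)$; and the $(2,3)$ and $(3,2)$ blocks are $b - \tfrac{\epsilon}{2}e_0 I$ and $-b + \tfrac{\epsilon}{2}e_0 I$, again negative transposes since $b$ is symmetric and $e_0 I$ is symmetric. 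Hence $\Theta_\epsilon^{\rm T} = -\Theta_\epsilon$, and by the uniqueness of the Levi-Civita connection, $\Theta_\epsilon$ is its local connection matrix for $g_\epsilon$. It then remains to split off the $\epsilon$-dependent part: comparing entries, the blocks of $\Theta_\epsilon$ carrying a factor of $\epsilon$ are exactly $\tfrac{\epsilon}{2}$ times the corresponding blocks of $B$ — including the $-\tfrac{\epsilon}{2}e_0 I$ terms, matched by the $\mp e_0 I$ entries of $B$ — while the remaining blocks assemble into $A$. This yields $\theta_\epsilon = \Theta_\epsilon = A + \tfrac{\epsilon}{2}B$, and specialising to $\epsilon = 0$ returns $A$, which by Lemma \ref{lem:A} is the connection form of the pulled-back Levi-Civita connection of $g_V$ on $E = \pi^*TV$.

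I do not expect any genuine obstacle here: the corollary is a bookkeeping consequence of Proposition \ref{prop:str.eqs} together with the standard characterisation of the Levi-Civita connection. The only step demanding real care is the sign-and-transpose bookkeeping when matching the block form of $\Theta_\epsilon$ against the skew-symmetry condition and against $A + \tfrac{\epsilon}{2}B$; a dropped sign in the off-diagonal blocks, or in the $e_0 I$ terms, would be the likeliest source of error, so I would double-check those against the conventions fixed in Definition \ref{dfn:coframe} and Proposition \ref{prop:str.eqs}.
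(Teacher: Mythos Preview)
Your proposal is correct and follows essentially the same approach as the paper: the paper simply asserts, just before the corollary, that the matrix in \eqref{eq:str.eqs.matrix} represents the Levi-Civita connection $\theta_\epsilon$ and that setting $\epsilon=0$ recovers $A$, whereas you spell out the standard justification (torsion-freeness from the first structure equation plus skew-symmetry in the orthonormal coframe) that the paper leaves implicit. Your block-by-block verification of skew-symmetry and of the decomposition $A+\tfrac{\epsilon}{2}B$ is exactly the bookkeeping the paper takes for granted.
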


Corollary \ref{cor:conn.eps} allows us to define a family of connections $\theta^{k}_{\epsilon}$ on $TK$ as follows.

\begin{definition}
\label{dfn:theta.eps.k}
    For each $0\neq k\in\R$, let $\theta^{k}_{\epsilon}$ be the connection on $TK$ given, in the local coframe of Definition \ref{dfn:coframe}, by
\begin{equation*}
    \theta^k_{\epsilon}
    :=A+\frac{k\epsilon}{2}B,
\end{equation*}
    with $A$ and $B$ as in Corollary \ref{cor:conn.eps}.  
\end{definition}

\begin{remark}
\label{rem: k=0 trivial case}
    The \emph{trivial case} $k=0$ can only occur when $K=S^1\times V$ is a trivial bundle over $V$, and then the connection on $TK$ will be equal to the pullback of the Levi-Civita connection on $V$ (trivial along $S^1$).  Since we are assuming that $K\to V$ is a non-trivial $S^1$-bundle, we require $k\neq 0$. 
\end{remark}
\begin{remark}
    Notice that
\begin{align*}
    d\left(\begin{array}{c} e_0\\ e\\ Je\end{array}\right)
    &=-\left(A+\frac{k\epsilon}{2}B\right)\wedge \left(\begin{array}{c} e_0\\ e\\ Je\end{array}\right)+\frac{(k-1)\epsilon}{2}B\wedge \left(\begin{array}{c} e_0\\ e\\ Je\end{array}\right)\\
    &=-\theta^k_{\epsilon}\wedge \left(\begin{array}{c} e_0\\ e\\ Je\end{array}\right)+(1-k)\epsilon\left(\begin{array}{c}\omega\\ 0\\ 0\end{array}\right).
\end{align*}
    Therefore, we may view $\theta^k_{\epsilon}$ as a metric connection  on $K$, with torsion $(1-k)\epsilon\omega\otimes e_0$.
    \textcolor{black}{Since $k\neq 0$, we see from Corollary \ref{cor:conn.eps} and Definition \ref{dfn:theta.eps.k} that we may view $\theta^k_{\epsilon}$ as a ``squashing'' of the Levi-Civita connection $\theta_{\epsilon}$ of the metric $g_{\epsilon}$ on $K$.}
\end{remark}

\subsubsection{Local curvature matrices
}

We begin by relating the curvature of the connections $\theta^k_\epsilon$ in Definition \ref{dfn:theta.eps.k} to the curvature $F_A$ of $A$.

\begin{prop}
    In the local coframe of Definition \ref{dfn:coframe}, the curvature $R_{\theta_{\epsilon}^{k}}$ of the  connection $\theta_{\epsilon}^{k}$ from Definition \ref{dfn:theta.eps.k}   satisfies
\begin{equation*}
    R_{\theta_{\epsilon}^{k}}=F_A+\frac{k\epsilon^2}{2}\omega\mathcal{I}+\frac{k^2\epsilon^2}{4}B\wedge B,
\end{equation*}
where
\begin{equation}\label{eq:mathcal.I}
\mathcal{I}=\left(\begin{array}{ccc} 0 & 0 & 0 \\
0 & 0 & -I \\
0 & I & 0\end{array}\right)
\end{equation}
and
\begin{equation}
\begin{split}
\label{eq:BwedgeB}
    B\wedge B
    &=\left(\begin{array}{ccc}
        0 &e_0\wedge e^{\rm T} & e_0\wedge Je^{\rm T} \\
        -e_0\wedge e & -Je\wedge Je^{\rm T} & Je \wedge e^{\rm T}\\
        -e_0\wedge Je & e\wedge Je^{\rm T} & -e\wedge e^{\rm T}
    \end{array}\right)\\
    &=e_0
    \wedge\left(\begin{array}{ccc} 0 & e^{\rm T} & Je^{\rm T}\\ 
    -e & 0 & 0\\
    -Je & 0 & 0
    \end{array}\right)+
    \left(\begin{array}{ccc}
        0 & 0 & 0 \\
        0 & -Je\wedge Je^{\rm T} & Je \wedge e^{\rm T}\\
        0 & e\wedge Je^{\rm T} & -e\wedge e^{\rm T}
    \end{array}\right).
    \end{split}
    \end{equation}
\end{prop}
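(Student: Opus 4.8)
The plan is to apply the curvature formula $R_\theta = d\theta + \theta\wedge\theta$ to the $7\times 7$ connection matrix $\theta := \theta_\epsilon^k = A + \tfrac{k\epsilon}{2}B$ of Definition \ref{dfn:theta.eps.k}, with $A$ and $B$ as in Corollary \ref{cor:conn.eps}, and to reduce the statement to two elementary matrix computations. Expanding and using $F_A = dA + A\wedge A$,
$$R_{\theta_\epsilon^k} = F_A + \tfrac{k\epsilon}{2}\big(dB + A\wedge B + B\wedge A\big) + \tfrac{k^2\epsilon^2}{4}\, B\wedge B,$$
so the proposition follows once we verify the two identities $dB + A\wedge B + B\wedge A = \epsilon\,\omega\,\mathcal{I}$, with $\mathcal{I}$ as in \eqref{eq:mathcal.I}, and $B\wedge B$ as displayed in \eqref{eq:BwedgeB}.

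For the first identity I would work blockwise, in the $1+3+3$ splitting in which $A$ and $B$ are written in Corollary \ref{cor:conn.eps} and \eqref{eq:B}, using $de_0 = \epsilon\omega$ together with the structure equations $de = -a\wedge e - b\wedge Je$ and $d(Je) = b\wedge e - a\wedge Je$ of Proposition \ref{prop:str.eqs} to compute $dB$, and the (anti)symmetries $a^{\rm T} = -a$, $b^{\rm T} = b$ recorded there. The point is that the first row and first column of $A$ vanish, so each block of $dB$ carrying an $e$- or $Je$-entry is paired against exactly one of $A\wedge B$ or $B\wedge A$; after relabelling indices via $a_{ji} = -a_{ij}$, $b_{ji} = b_{ij}$ and using that a wedge of two $1$-forms anticommutes, those contributions cancel in pairs, and so do the $3\times 3$ diagonal blocks of $A\wedge B + B\wedge A$. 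The only surviving terms are $d(-e_0 I) = -\epsilon\omega I$ and $d(e_0 I) = \epsilon\omega I$, sitting in the $(2,3)$ and $(3,2)$ blocks, which is exactly $\epsilon\,\omega\,\mathcal{I}$.

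The identity for $B\wedge B$ in \eqref{eq:BwedgeB} is then a direct multiplication of the matrix \eqref{eq:B} by itself: the would-be diagonal scalar contributions vanish because $e^{\rm T}\wedge e = Je^{\rm T}\wedge Je = 0$ and $(e_0 I)\wedge(e_0 I) = 0$, and the remaining entries assemble into the first displayed matrix; the second display merely isolates the terms containing $e_0$. Substituting both identities into the expansion above yields the claimed formula for $R_{\theta_\epsilon^k}$.

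The computation is entirely mechanical, so the main (if modest) obstacle is bookkeeping the signs and transposes in the blockwise evaluation of $dB + A\wedge B + B\wedge A$: one must combine the skew-symmetry of $a$, the symmetry of $b$, and the anticommutativity of $1$-forms so that the connection-form terms cancel rather than reinforce, leaving only the $e_0$-derivative term $\epsilon\,\omega\,\mathcal{I}$.
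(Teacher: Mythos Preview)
Your proposal is correct and follows essentially the same approach as the paper: expand $R_{\theta_\epsilon^k}=F_A+\tfrac{k\epsilon}{2}(dB+A\wedge B+B\wedge A)+\tfrac{k^2\epsilon^2}{4}B\wedge B$, then verify blockwise (using the structure equations of Proposition~\ref{prop:str.eqs} and the symmetries of $a,b$) that $dB+A\wedge B+B\wedge A=\epsilon\omega\mathcal{I}$, and finally multiply out $B\wedge B$ directly from \eqref{eq:B}. Your description of the cancellation mechanism is in fact slightly more explicit than the paper's, which simply displays the block matrix and invokes the structure equations.
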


\begin{proof}
From the relation between $\theta_{\epsilon}^{k}$ and $A$ in Corollary \ref{cor:conn.eps}, we see that
\begin{align}
    R_{\theta_{\epsilon}^{k}}
    &=d\theta_{\epsilon}^{k} +\theta_{\epsilon}^{k}\wedge\theta_{\epsilon}^{k}
    \nonumber\\
    &=dA+\frac{k\epsilon}{2}dB+(A+\frac{k\epsilon}{2}B)\wedge(A+\frac{k\epsilon}{2}B)
    \nonumber\\
    &=F_A+\frac{k\epsilon}{2}(dB+A\wedge B+B\wedge A)+\frac{k^2\epsilon^2}{4}B\wedge B.
    \label{eq:R.eps.k.1}
\end{align}
The first term of interest in \eqref{eq:R.eps.k.1} is 
\begin{align}
    &dB +A\wedge B+B\wedge A\nonumber\\
    &=\left(\begin{array}{ccc}
        0 & d(Je^{\rm T}) +Je^{\rm T}\wedge a+e^{\rm T}\wedge b & -d(e^{\rm T})+Je^{\rm T}\wedge b-e^{\rm T}\wedge a\\
        -d(Je)-a\wedge Je +b\wedge e & b\wedge e_0 I+e_0 I\wedge b & -d(e_0)I -a\wedge e_0 I-e_0 I\wedge a\\
        d(e)+b\wedge Je+a\wedge e & d(e_0 )I+a\wedge e_0 I+e_0 I\wedge a & b\wedge e_0 I+e_0 I\wedge b
    \end{array}\right)
    \nonumber\\
    &=\epsilon\omega 
    \left(\begin{array}{ccc} 
        0 & 0 & 0 \\
        0 & 0 & -I \\
        0 & I & 0
    \end{array}\right)
    =\epsilon\omega\mathcal{I}
    \label{eq:dAB}
\end{align}
as a consequence of the structure equations for the coframe in Proposition \ref{prop:str.eqs}. Equation \eqref{eq:BwedgeB} follows directly from \eqref{eq:B}.  
\end{proof}

At this point, it is worth recalling that $A$ is a $\rG_2$-instanton, in fact the lift of a connection with holonomy $\mathrm{SU}(3)$ on $V$, so $F_A$ must take values in $\mathfrak{su}(3)\subseteq\mathfrak{g}_2$:
\begin{equation}
\label{eq:FA}
    F_A=
    \left(\begin{array}{ccc} 
        0 & 0 & 0 \\ 
        0 & \alpha & \beta \\
        0 &-\beta & \alpha
    \end{array}\right),
\end{equation}
where $\alpha$ is a skew-symmetric $3\times 3$ matrix of 2-forms, and $\beta$ is a symmetric traceless $3\times 3$ matrix of 2-forms. 

\begin{lemma}
    The block-elements of the curvature matrix \eqref{eq:FA} of $A$ in the local coframe \eqref{eq:coframe}, satisfy: 
    \begin{align}
    \label{eq: F ^ e and Je}
    \begin{split}
        \alpha\wedge e+\beta\wedge Je&=0,\\
        \alpha\wedge Je-\beta\wedge e&=0.
    \end{split}
    \end{align}
Moreover, using the notation of Definition \ref{dfn:box.vector}, we have
\begin{equation}\label{eq:F.wedge.e.Je.2}
    \begin{split}
    \alpha\wedge[e]-[e]\wedge \alpha-\beta\wedge[Je]-[Je]\wedge\beta&=0,\\
    \alpha\wedge [Je]+[Je]\wedge\alpha+\beta\wedge[e]-[e]\wedge\beta.
    \end{split}
\end{equation}
\end{lemma}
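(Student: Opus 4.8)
The plan is to prove \eqref{eq: F ^ e and Je} and \eqref{eq:F.wedge.e.Je.2} by differentiating the structure equations, i.e.\ by applying the Bianchi-type identity that comes from $d^2=0$. First I would revisit Proposition~\ref{prop:str.eqs}, noting that the connection matrix appearing there for $\theta_\epsilon$ decomposes as $A+\tfrac{\epsilon}{2}B$, and that $A$ alone governs the basic part: if we pull back from $V$, the coframe $(e,Je)$ satisfies $de=-a\wedge e-b\wedge Je$ and $d(Je)=b\wedge e-a\wedge Je$, which is exactly the $\epsilon\to 0$ restriction of \eqref{eq:str.eqs.1.e}--\eqref{eq:str.eqs.1.Je}. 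Applying $d$ to these and using $d^2e_i=0$, $d^2(Je_i)=0$, together with the definitions $F_A=dA+A\wedge A$ (so $\alpha=da+a\wedge a-b\wedge b$ plus the holonomy structure, and $\beta=db+a\wedge b+b\wedge a$), the cross terms cancel and one is left precisely with $\alpha\wedge e+\beta\wedge Je=0$ and $\alpha\wedge Je-\beta\wedge e=0$. This is just the first Bianchi identity $F_A\wedge e^{(j)}=0$ written out in the $\mathfrak{su}(3)$ block form, or equivalently the statement that $\nabla^A$ applied to the tautological section vanishes; it can also be seen more invariantly by observing that $(e,Je)$ is (the pullback of) the identity-valued $1$-form on $V$ and $d^\nabla(\mathrm{id})=T^\nabla=0$ for the torsion-free connection.

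Next I would derive \eqref{eq:F.wedge.e.Je.2} from Proposition~\ref{prop:str.eqs.2} in the same way. The bracket notation $[e],[Je]$ (Definition~\ref{dfn:box.vector}) packages the $2$-forms $e_i\wedge e_j$ etc.\ into matrices, and \eqref{eq:str.eqs.2} gives $d([e])=-a\wedge[e]-[e]\wedge a+b\wedge[Je]-[Je]\wedge b$ and similarly for $d([Je])$. Differentiating once more and imposing $d^2([e])=0$, the terms involving only $a$ and $b$ reorganise into $(da+a\wedge a)\wedge[e]-[e]\wedge(da+a\wedge a)$ and the analogous $b$-terms, which by the defining relations for $\alpha,\beta$ collapse to $\alpha\wedge[e]-[e]\wedge\alpha-\beta\wedge[Je]-[Je]\wedge\beta=0$ and $\alpha\wedge[Je]+[Je]\wedge\alpha+\beta\wedge[e]-[e]\wedge\beta=0$. (I note in passing that the statement \eqref{eq:F.wedge.e.Je.2} as printed is missing ``$=0$'' on its second line; the proof produces the vanishing of that expression.) Alternatively, \eqref{eq:F.wedge.e.Je.2} can be deduced algebraically from \eqref{eq: F ^ e and Je} by writing the entries of $[e]$ as the appropriate wedges $e_i\wedge e_j$ and using the two scalar identities componentwise; I would present whichever is shorter, likely the direct $d^2=0$ computation since it parallels part (a).

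The main obstacle, such as it is, is purely bookkeeping: matching the matrix-commutator notation $a\wedge[e]-[e]\wedge a$ with the scalar structure equations and keeping the signs straight when $d$ passes through wedge products of $1$-forms versus $2$-forms (recall $d(\eta\wedge\rho)=d\eta\wedge\rho-\eta\wedge d\rho$ for $\eta$ a $1$-form). I would organise the computation by first recording $d^\nabla$ of the tautological objects once and for all, then applying $d^\nabla$ again to get the curvature contracted against them; the torsion-freeness of $A$ on $V$ (equivalently the vanishing of the $e_0$-independent torsion in Proposition~\ref{prop:str.eqs}) is what makes the would-be torsion terms disappear, and the holonomy reduction to $\SU(3)$ is what guarantees $F_A$ has the block shape \eqref{eq:FA} with $\alpha$ skew and $\beta$ symmetric traceless, so that the two identities in each display are genuinely the real and imaginary parts of a single $\mathfrak{su}(3)$-valued Bianchi identity.
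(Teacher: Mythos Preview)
Your proposal is correct and follows essentially the same approach as the paper: differentiate the structure equations for $(e,Je)$ (the $\epsilon$-independent part of Proposition~\ref{prop:str.eqs}) and for $([e],[Je])$ (Proposition~\ref{prop:str.eqs.2}) and use $d^2=0$ to extract $F_A$ acting on the respective objects. Your observation that the second line of \eqref{eq:F.wedge.e.Je.2} is missing ``$=0$'' is also correct.
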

\begin{proof}
Differentiating the defining relation \begin{equation*}
    d\left(\begin{array}{c} 0\\ e\\ Je \end{array}\right)=-A\wedge \left(\begin{array}{c} 0\\ e\\ Je \end{array}\right),
\end{equation*}
we obtain 
\begin{align*}
    0&=-d A\wedge 
    \left(\begin{array}{c}
        0\\ e\\ Je 
    \end{array}\right) +A\wedge  d\left(\begin{array}{c} 
        0\\e\\ Je 
    \end{array}\right)    
    =-(d A+A\wedge A)\wedge 
    \left(\begin{array}{c}
        0\\ e\\ Je 
    \end{array}\right)\\
    &=-F_A\wedge 
    \left(\begin{array}{c}
        0\\ e\\ Je 
    \end{array}\right)
    =-\left(\begin{array}{ccc} 
        0 & 0 & 0 \\ 
        0 & \alpha & \beta \\
        0 &-\beta & \alpha
    \end{array}\right)\wedge 
    \left(\begin{array}{c}
        0\\ e\\ Je 
    \end{array}\right)\\
    &=-\left(\begin{array}{c} 0\\ \alpha\wedge e+\beta\wedge Je\\ \alpha\wedge Je-\beta\wedge e\end{array}\right).
  \end{align*}
Equation \eqref{eq:F.wedge.e.Je.2} follows similarly from the structure equations \eqref{eq:str.eqs.2}.
\end{proof}
\subsection{The ``squashed'' Bismut and Hull connections 
on \texorpdfstring{$TK$}{TK}}

We now introduce an additional parameter to our connections which introduces a multiple of the flux $H_{\epsilon}$ as torsion.  This, in particular, leads us to the Bismut and Hull connections.

\subsubsection{Local connection matrices and torsion
}

We begin by identifying the flux $H_{\epsilon}$ with a locally defined matrix of 1-forms and a vector-valued 2-form as follows, so that we can define connections with torsion given by the flux.

\begin{prop}
\label{prop:H.torsion} 
In the local coframe of Definition \ref{dfn:coframe}, and using the notation from Definition \ref{dfn:box.vector}, let
\begin{equation}
\label{eq:C}
    C:=\left(\begin{array}{ccc}
        0 & Je^{\rm T} & -e^{\rm T}\\ -Je & -[e] & e_0I+[Je] \\
        e & -e_0I+[Je] & [e]
    \end{array}\right)
    =
    \left(\begin{array}{ccc}
        0 & Je^{\rm T} & -e^{\rm T}\\ -Je & -[e] & [Je] \\
        e & [Je] & [e]
    \end{array}\right)
    -e_0
    \cI.
\end{equation}
    Then we may raise an index on the $3$-form $H_{\epsilon}$ and view it as a vector-valued $2$-form, as follows:
\begin{equation}
    H_{\epsilon}=\frac{\epsilon}{2}\left(\begin{array}{ccc}0 & Je^{\rm T} & -e^{\rm T}\\ -Je & -[e] & e_0I+[Je] \\
    e & -e_0I+[Je] & [e]\end{array}\right)\wedge\left(\begin{array}{c} e_0\\ e\\ Je\end{array}\right)=\frac{\epsilon}{2}C\wedge \left(\begin{array}{c} e_0\\ e\\ Je\end{array}\right).
\end{equation}
\end{prop}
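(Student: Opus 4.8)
The plan is to verify the claimed identity by a direct expansion in the local coframe of Definition~\ref{dfn:coframe}, so that the only inputs are Lemma~\ref{lem:flux}, the coframe descriptions of $\omega$ and $\Re\Omega$ in \eqref{eq:omega.coframe} and \eqref{eq:Re.Omega.coframe}, and the bracket notation of Definition~\ref{dfn:box.vector}. First I would rewrite the flux from Lemma~\ref{lem:flux}, using $e_0=\epsilon\eta$ from \eqref{eq:coframe}, as
\[
H_\epsilon=-\epsilon\,e_0\wedge\omega+\epsilon\,\Re\Omega,
\]
and recall that ``raising an index'' on the $3$-form $H_\epsilon$ with respect to the \emph{orthonormal} coframe $(e_0,e_1,e_2,e_3,Je_1,Je_2,Je_3)$ means passing to the vector-valued $2$-form whose component along the $a$-th covector is $X_a\lrcorner H_\epsilon$, where $(X_a)$ is the dual frame; total antisymmetry of $H_\epsilon$ makes the choice of slot immaterial. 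It then suffices to check that the seven entries of $\tfrac{\epsilon}{2}\,C\wedge(e_0,e,Je)^{\rm T}$ agree with $X_a\lrcorner H_\epsilon$.

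For the entry along $e_0$, the top row $(0,\,Je^{\rm T},\,-e^{\rm T})$ of $C$ contracts the coframe vector to $Je^{\rm T}\wedge e-e^{\rm T}\wedge Je=-2\omega$ by \eqref{eq:omega.coframe}, so $\tfrac{\epsilon}{2}$ of this is $-\epsilon\omega=X_0\lrcorner H_\epsilon$, since $X_0\lrcorner\Re\Omega=0$ and $X_0\lrcorner(e_0\wedge\omega)=\omega$. For the remaining six entries I would treat the $e$-block and the $Je$-block simultaneously: the $\pm e_0I$ entries in the off-diagonal blocks of $C$ reproduce exactly the contributions of $-\epsilon\,e_0\wedge\omega$ (using $X_{e_i}\lrcorner\omega=Je_i$ and $X_{Je_i}\lrcorner\omega=-e_i$), while the $-[e]$, $[Je]$ and $[e]$ blocks, once expanded through Definition~\ref{dfn:box.vector}, reproduce the matching pieces of $\epsilon\,\Re\Omega$ read off from \eqref{eq:Re.Omega.coframe}; the two blocks play symmetric roles under $e_i\leftrightarrow Je_i$, so only one ``mixed'' entry needs to be written out in full. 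The second displayed form of $C$ in \eqref{eq:C} is then immediate: separating the $\pm e_0I$ summands from the $\pm[\cdot]$ summands exhibits $C$ as the stated $e_0$-independent matrix minus $e_0\cI$, with $\cI$ as in \eqref{eq:mathcal.I}.

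There is no conceptual obstacle here — the argument is entirely bookkeeping. The one point that genuinely needs care is the internal consistency of three sign conventions: the Levi-Civita symbol implicit in the bracket notation of Definition~\ref{dfn:box.vector}, the coframe orientation used to expand $\Re\Omega$ and $\Im\Omega$ in \eqref{eq:Re.Omega.coframe}--\eqref{eq:Im.Omega.coframe}, and the slot/sign convention for raising an index. Accordingly I would fix all three first and then verify carefully one representative off-diagonal entry — say the $Je_3$-component, which involves precisely $e_0\wedge e_3$, $e_1\wedge Je_2$ and $e_2\wedge Je_1$ — before invoking the $e\leftrightarrow Je$ symmetry to conclude for the rest.
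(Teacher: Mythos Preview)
Your proposal is correct and follows essentially the same route as the paper: a direct entry-by-entry verification in the orthonormal coframe, using Lemma~\ref{lem:flux} and the expansions \eqref{eq:omega.coframe}, \eqref{eq:Re.Omega.coframe}. The paper simply writes out all seven components of the raised-index $H_\epsilon$ explicitly and then packages them via Lemma~\ref{lem: computational rules}, whereas you organise the same computation by checking the $e_0$-entry and one representative spatial entry and appealing to pattern.

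Two small cautions for when you execute the plan. First, the $\pm e_0I$ blocks alone do \emph{not} account for all of $X_{e_i}\lrcorner(-\epsilon\,e_0\wedge\omega)$: the first-column entries $-Je$ and $e$, wedged with $e_0$, contribute the other half (this is why the overall prefactor is $\epsilon/2$ rather than $\epsilon$). Second, the swap $e\leftrightarrow Je$ sends $\Re\Omega$ to $-\Im\Omega$, not to $\pm\Re\Omega$, so the ``symmetry'' you invoke between the $e$-block and $Je$-block rows of $C$ is not literally that substitution; you will in practice need to write out one entry from each block (or, equivalently, just expand all six as the paper does). Neither point threatens the argument, since your stated plan is to fix conventions and verify a representative entry explicitly before generalising.
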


\begin{proof}
By Lemma \ref{lem:flux}, \eqref{eq:omega.coframe} and \eqref{eq:Re.Omega.coframe}, we have that
\begin{equation}
\begin{split}    
H_{\epsilon}&=-\epsilon^2\eta\wedge\omega+\epsilon\Re\Omega\\
    &=-\epsilon e_0\wedge (e_1\wedge Je_1+e_2\wedge Je_2+e_3\wedge Je_3)\\
    &\qquad+\epsilon(e_1\wedge e_2\wedge e_3-e_1\wedge Je_2\wedge Je_3
    -e_2\wedge Je_3\wedge Je_1-e_3\wedge Je_1\wedge Je_2).
\end{split}
\end{equation}
We raise an index, so that $H_{\epsilon}$ is a vector-valued 2-form, and use Lemma \ref{lem: computational rules} to deduce the claim:
\begin{align*}
    H_{\epsilon}&=\epsilon
    \left(\begin{array}{c}
        -e_1\wedge Je_1-e_2\wedge Je_2-e_3\wedge Je_3 \\ e_0\wedge Je_1+e_2\wedge e_3-Je_2\wedge Je_3\\
        e_0\wedge Je_2+e_3\wedge e_1-Je_3\wedge Je_1\\
        e_0\wedge Je_3+e_1\wedge e_2-Je_1\wedge Je_2\\
        -e_0\wedge e_1-e_2\wedge Je_3+e_3\wedge Je_2\\
        -e_0\wedge e_1-e_3\wedge Je_1+e_1\wedge Je_3\\
        -e_0\wedge e_1-e_1\wedge Je_2+e_2\wedge Je_1
    \end{array}\right)
    =\frac{\epsilon}{2}
    \left(\begin{array}{ccc}
        0 & Je^{\rm T} & -e^{\rm T}\\ 
        -Je & -[e] & e_0I+[Je] \\
        e & -e_0I+[Je] & [e]
    \end{array}\right)
    \wedge\left(\begin{array}{c} 
        e_0\\ e\\ Je\end{array}\right).
        \qedhere
\end{align*}
\end{proof}

\begin{cor}
\label{cor:conn.delta.k}
    In the terms of Definition \ref{dfn:theta.eps.k} and Proposition \ref{prop:H.torsion}, let  $\tau_{\epsilon}:=\epsilon C$; then each local matrix
    \begin{equation}
    \label{eq:theta.delta.eps.k}
        \theta^{\delta,k}_{\epsilon}
        =\theta_{\epsilon}^{k} +\frac{k\delta}{2}\tau_\epsilon
        =A+\frac{k\epsilon}{2}B +\frac{k\epsilon\delta}{2}C,
        \qforq
        k\neq 0 \text{ and } \delta\in\R,
    \end{equation}
    defines a connection on $TK$, with torsion 
    \begin{equation}\label{eq:H.k.delta.eps}
    H^{\delta,k}\ep=(1-k) \epsilon\omega\otimes e_0+k\delta H_\epsilon.
    \end{equation}
    Explicitly,
    \begin{equation*}
        \theta^{\delta,k}_{\epsilon}
        =A+\frac{k\epsilon}{2}B +\frac{k\epsilon\delta}{2}C
        =\left(\begin{array}{ccc} 
            0 & \frac{k\epsilon(1+\delta)}{2}Je^{\rm T} & -\frac{k\epsilon(1+\delta)}{2}e^{\rm T}\\[2pt]
            -\frac{k\epsilon(1+\delta)}{2}Je & a-\frac{k\epsilon\delta}{2}[e] & b-\frac{k\epsilon(1-\delta)}{2}e_0I+\frac{k\epsilon\delta}{2}[Je]\\[2pt]
            \frac{k\epsilon(1+\delta)}{2}e & -b+\frac{k\epsilon(1-\delta)}{2}e_0I+\frac{k\epsilon\delta}{2}[Je] & a+\frac{k\epsilon\delta}{2}[e]
        \end{array}\right).
    \end{equation*}
\end{cor}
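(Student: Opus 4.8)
The plan is to establish the three assertions of Corollary \ref{cor:conn.delta.k} in turn: that $\theta^{\delta,k}_{\epsilon}$ is a genuine metric connection on $TK$, that its torsion equals $H^{\delta,k}_{\epsilon}$ as in \eqref{eq:H.k.delta.eps}, and that it has the displayed block form.

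For the first point I would check that $\tfrac{k\epsilon}{2}B+\tfrac{k\epsilon\delta}{2}C$ is an $\so(7)$-valued $1$-form in the coframe of Definition \ref{dfn:coframe}. Inspecting \eqref{eq:B} and \eqref{eq:C}, and using that the matrices $[e]$ and $[Je]$ of Definition \ref{dfn:box.vector} are skew-symmetric, one sees immediately that $B$ and $C$ are skew-symmetric $7\times 7$ matrices of local $1$-forms, hence so is their combination above. Since $A$, extended trivially along the Reeb direction as in \eqref{eq:A}, is a metric connection on $(TK,g_\epsilon)$ — indeed $\theta^{k}_{\epsilon}=A+\tfrac{k\epsilon}{2}B$ was already seen to be one in the Remark after Definition \ref{dfn:theta.eps.k} — adding the further skew term $\tfrac{k\epsilon\delta}{2}C$ again yields a metric connection. (Alternatively: $\tfrac{\epsilon}{2}C$ is precisely the skew-symmetric contorsion realising the globally defined flux $3$-form $H_\epsilon$ via Proposition \ref{prop:H.torsion}, so it patches to a global $\so(7)$-valued $1$-form.)

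For the torsion I would use the sign convention fixed in the Remark after Definition \ref{dfn:theta.eps.k}, whereby the torsion of a metric connection with local matrix $\Gamma$ is $d\mathbf e+\Gamma\wedge\mathbf e$, with $\mathbf e$ the coframe column vector $(e_0,e,Je)^{\rm T}$. Since changing $\Gamma$ to $\Gamma+\phi$ changes the torsion by exactly $\phi\wedge\mathbf e$, and the Remark gives $d\mathbf e+\theta^{k}_{\epsilon}\wedge\mathbf e=(1-k)\epsilon\,\omega\otimes e_0$, the torsion of $\theta^{\delta,k}_{\epsilon}=\theta^{k}_{\epsilon}+\tfrac{k\epsilon\delta}{2}C$ is $(1-k)\epsilon\,\omega\otimes e_0+\tfrac{k\epsilon\delta}{2}C\wedge\mathbf e$. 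Proposition \ref{prop:H.torsion} says $\tfrac{\epsilon}{2}C\wedge\mathbf e=H_\epsilon$, so the last term is $k\delta H_\epsilon$ and the torsion is $(1-k)\epsilon\,\omega\otimes e_0+k\delta H_\epsilon=H^{\delta,k}_{\epsilon}$, as claimed.

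The explicit matrix is then just the block-by-block sum of $A$ from \eqref{eq:A}, $\tfrac{k\epsilon}{2}B$ from \eqref{eq:B} and $\tfrac{k\epsilon\delta}{2}C$ from \eqref{eq:C}: e.g. the $(1,2)$-block is $\tfrac{k\epsilon}{2}Je^{\rm T}+\tfrac{k\epsilon\delta}{2}Je^{\rm T}=\tfrac{k\epsilon(1+\delta)}{2}Je^{\rm T}$, the $(2,2)$-block is $a-\tfrac{k\epsilon\delta}{2}[e]$, and the $(2,3)$-block is $b-\tfrac{k\epsilon}{2}e_0I+\tfrac{k\epsilon\delta}{2}(e_0I+[Je])=b-\tfrac{k\epsilon(1-\delta)}{2}e_0I+\tfrac{k\epsilon\delta}{2}[Je]$; the remaining entries are obtained identically (and are consistent with skew-symmetry). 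I do not expect a real obstacle here — everything is bookkeeping; the only points requiring a little care are confirming the skew-symmetry of $C$ so as to stay within metric connections, and keeping all signs consistent with the torsion convention adopted earlier in the paper.
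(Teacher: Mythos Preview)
Your proposal is correct and follows essentially the same approach as the paper: both compute the torsion by starting from the structure equation for $\theta^{k}_{\epsilon}$ (the Remark after Definition~\ref{dfn:theta.eps.k}) and then adding the contribution $\tfrac{k\epsilon\delta}{2}C\wedge\mathbf{e}=k\delta H_{\epsilon}$ via Proposition~\ref{prop:H.torsion}. Your write-up is in fact more complete than the paper's, which omits the verification that $B,C$ are skew (hence that $\theta^{\delta,k}_{\epsilon}$ is metric) and the block-by-block sum giving the explicit matrix.
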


\begin{proof} We see from \eqref{eq:str.eqs.matrix} and Proposition \ref{prop:H.torsion} that 
\begin{align*}
 d\left(\begin{array}{c} e_0\\ e\\ Je\end{array}\right)&=-(A+\frac{k\epsilon}{2}B+\frac{k\epsilon\delta}{2}C)\wedge    \left(\begin{array}{c} e_0\\ e\\ Je\end{array}\right)+(1-k)\epsilon\left(\begin{array}{c}\omega\\ 0\\0\end{array}\right)
 +\frac{k\epsilon\delta}{2}C\wedge \left(\begin{array}{c} e_0\\ e\\ Je\end{array}\right)r\\
 &=-
 \theta^{\delta,k}_{\epsilon}\wedge    
    \left(\begin{array}{c} 
        e_0\\ e\\ Je
    \end{array}\right)+(1-k)\epsilon\left(\begin{array}{c}\omega\\ 0\\0\end{array}\right)
    +k\delta H_{\epsilon}.\qedhere
\end{align*}
\end{proof}

\begin{remark}
It is possible to further deform the connection, and indeed the whole heterotic $\rG_2$ system, by allowing a non-trivial (non-constant) dilaton, which is equivalent to performing a conformal transformation on the $\rG_2$-structure.  However, since there are in general no distinguished functions on $K$ to define the dilaton, we will not pursue this possibility here. 
\end{remark}

\begin{definition}
\label{dfn:Bismut.k} 
    Taking $\delta=+1$ in \eqref{eq:theta.delta.eps.k} gives 
\begin{equation}
    \label{eq:Bismut.k}
    \theta^{+,k}_{\epsilon} =A+\frac{k\epsilon}{2}(B+C)
    = \left(
    \begin{array}{ccc} 
    0 & k\epsilon Je^{\rm T} & -k\epsilon e^{\rm T}\\
    -k\epsilon Je & a-\frac{k\epsilon}{2}[e] & b+\frac{k\epsilon}{2}[Je]\\
  k\epsilon e & -b+\frac{k\epsilon}{2}[Je] & a+\frac{k\epsilon}{2}[e]
    \end{array}
    \right).
\end{equation}
    We see from our choice of coframe that $\theta^{+,k}_{\epsilon}$ takes values in $\mathfrak{g}_2\subseteq\Lambda^2$, see e.g.~\cite{Lotay2011}, 
    and hence $\theta^{+,k}_{\epsilon}$ has holonomy contained in $\rG_2$, as its curvature will  necessarily take values in $\mathfrak{g}_2$.

Further, setting $k=1$ in \eqref{eq:Bismut.k} gives what is often called       
the \emph{Bismut connection} $\theta_{\epsilon}^+$ for $\varphi_{\epsilon}$, the unique metric connection which makes $\varphi_{\epsilon}$ parallel and has totally skew-symmetric torsion (which is the flux $H_{\epsilon}$).       
       \end{definition}
    
    \begin{remark}  The Bismut connection has been the subject of much study, and is a natural connection in this context. It is therefore tempting to use the Bismut connection (and more generally the connections $\theta^{+,k}_{\epsilon}$ in Definition \ref{dfn:Bismut.k}) when studying the heterotic $\rG_2$ system, particularly because of its holonomy property.   However, inspired by the ideas in \cites{Hull1986,Martelli2011}, one could also consider a connection, known as the \emph{Hull connection}, whose torsion has the \emph{opposite sign} to the Bismut connection when trying to satisfy the heterotic Bianchi identity  \eqref{eq:anomaly}.  This also motivates our discussion of the connections $\theta^{\delta,k}_{\epsilon}$ for $\delta<0$ as well as $\delta\geq 0$. 
    
    \end{remark}

    As a consequence of the previous remark, we will  also be \color{black} interested in the Hull connection, formally defined below.
\begin{definition}
\label{dfn:Hull.k}
    Taking $\delta=-1$ in \eqref{eq:theta.delta.eps.k} gives
\begin{align}
\label{eq:Hull.k}
\begin{split}
    \theta^{-,k}_\epsilon
    &= A+\frac{k\epsilon}{2}(B-C)\\
    &=
    \left(\begin{array}{ccc} 
        0 & 0 & 0\\
        0 & a+\frac{k\epsilon}{2}[e] & b-k\epsilon e_0I-\frac{k\epsilon}{2}[Je]\\
        0 & -b+ k\epsilon e_0I-\frac{k\epsilon}{2}[Je] & a-\frac{k\epsilon}{2}[e]
    \end{array}\right)\\
    &=
    \left(\begin{array}{ccc} 
        0 & 0 & 0\\
        0 & a+\frac{k\epsilon}{2}[e] & b -\frac{k\epsilon}{2}[Je]\\
        0 & -b-\frac{k\epsilon}{2}[Je] & a-\frac{k\epsilon}{2}[e]
    \end{array}\right)
    +k\epsilon e_0\cI. 
\end{split}
\end{align}
Setting $k=1$ in \eqref{eq:Hull.k} gives  the \emph{Hull connection} $\theta^{-}_{\epsilon}$ associated to the $\rG_2$-structure $\varphi_{\epsilon}$.
\end{definition}

\begin{remark}
As in the case of $\theta^k_{\epsilon}$, we may view the connections $\theta^{+,k}_{\epsilon}$ and $\theta^{-,k}_{\epsilon}$, respectively,  as ``squashed'' versions of the Bismut and Hull connections $\theta^+_{\epsilon}$ and $\theta^-_{\epsilon}$.
\end{remark}

\subsubsection{Local curvature matrices
}

Now, we want to determine 
the curvature of $\theta_{\epsilon}^{\delta,k}$ in Corollary \ref{cor:conn.delta.k}, with a particular emphasis on the cases $\delta=\pm 1$.  We begin with the result for all $\delta$. 

\begin{prop}
\label{prop:curv.k} 
    The curvature $R^{\delta,k}_\epsilon$ of the connection $\theta_{\epsilon}^{\delta,k}$ in \eqref{eq:theta.delta.eps.k} satisfies
\begin{equation}
\label{eq:R.delta.k}
    R^{\delta,k}_\epsilon
    =F_A 
    +\frac{k\epsilon^2(1-\delta)}{2}\omega\mathcal{I}
    + \frac{k^2\epsilon^2}{4}Q^{\delta}
    ,
\end{equation}
where $\mathcal{I}$ is given in \eqref{eq:mathcal.I}, 
\begin{align}
\label{eq:BdeltaC2}
\begin{split}
    Q^{\delta}:=(B+\delta C)&\wedge (B+\delta C)=(1-\delta)Q_-^{\delta}+(1+\delta)Q_+^{\delta}+\delta^2Q_0
\end{split}
\end{align}
and
\begin{align}
    Q^{\delta}_-&=e_0 \wedge \left(\begin{array}{ccc} 
        0 & (1+\delta) 
        e^{\rm T} & (1+\delta) 
        Je^{\rm T}\\ -(1+\delta) 
        e & -2\delta[
        Je] & -2\delta[
        e] \\
        -(1+\delta) 
        Je & -2\delta[
        e] & 2\delta[
        Je]
    \end{array}\right), \label{eq:Qdelta-}\\
 Q^{\delta}_+&=   \left(\begin{array}{ccc} 
        0 & 2\delta(e\times Je)^{\rm T} & \delta(e\times e-Je\times Je)^{\rm T}\\ -2\delta(e\times Je) & -(1+\delta)(Je\wedge Je^{\rm T}) & (1+\delta)(Je\wedge e^{\rm T})\\
        -\delta(e\times e-Je\times Je) & (1+\delta)(e\wedge Je^{\rm T}) & - (1+\delta)(e\wedge e^{\rm T})
    \end{array}\right),
    \label{eq:Qdelta+}\\
    Q_0&=\frac{1}{2} \left(\begin{array}{ccc}
        0 & 0 & 0 \\ 
        0 & -[e\times e+Je\times Je] & -2([e]\wedge [Je]-[Je]\wedge [e])\\
        0 & 2([e]\wedge [Je] -[Je]\wedge [e]) & -[e\times e+Je\times Je]
    \end{array}\right).
    \label{eq:Q0}
\end{align}
\end{prop}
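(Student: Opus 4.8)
The plan is to compute $R^{\delta,k}_\epsilon = d\theta^{\delta,k}_\epsilon + \theta^{\delta,k}_\epsilon\wedge\theta^{\delta,k}_\epsilon$ directly from the explicit formula $\theta^{\delta,k}_\epsilon = A + \tfrac{k\epsilon}{2}B + \tfrac{k\epsilon\delta}{2}C$ in Corollary~\ref{cor:conn.delta.k}, expanding everything bilinearly. Writing $\Theta := \tfrac{k\epsilon}{2}(B+\delta C)$, we have $R^{\delta,k}_\epsilon = F_A + d\Theta + A\wedge\Theta + \Theta\wedge A + \Theta\wedge\Theta$, where the last term is $\tfrac{k^2\epsilon^2}{4}(B+\delta C)\wedge(B+\delta C) = \tfrac{k^2\epsilon^2}{4}Q^\delta$. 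So the statement reduces to two things: (a) evaluating the linear-in-$\Theta$ part $d\Theta + A\wedge\Theta + \Theta\wedge A = \tfrac{k\epsilon}{2}\big(d(B+\delta C) + A\wedge(B+\delta C) + (B+\delta C)\wedge A\big)$, which should collapse to $\tfrac{k\epsilon^2(1-\delta)}{2}\omega\mathcal I$; and (b) expanding the quadratic term $Q^\delta$ into the claimed decomposition $(1-\delta)Q^\delta_- + (1+\delta)Q^\delta_+ + \delta^2 Q_0$.

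For part (a), I would reuse the computation \eqref{eq:dAB} from the previous proposition, which already established $dB + A\wedge B + B\wedge A = \epsilon\omega\mathcal I$ via the structure equations of Proposition~\ref{prop:str.eqs}. It then remains to show that $dC + A\wedge C + C\wedge A = -\epsilon\omega\mathcal I$ (so that the $B$ and $\delta C$ contributions combine to $(1-\delta)\epsilon\omega\mathcal I$, giving the $\mathcal I$-coefficient $\tfrac{k\epsilon^2(1-\delta)}{2}$ after the $\tfrac{k\epsilon}{2}$ prefactor). Since $C = B - e_0\mathcal I + \big(\begin{smallmatrix}0&0&0\\0&-[e]&2[Je]\\0&2[Je]&[e]\end{smallmatrix}\big)$ — wait, more precisely $C$ equals the matrix in \eqref{eq:C} — one differentiates $C$ block by block using Proposition~\ref{prop:str.eqs.2} for $d[e]$ and $d[Je]$, \eqref{eq:str.eqs.1} for $de_0 = \epsilon\omega$, and the covariant-constancy identities; the $a$- and $b$-terms cancel against $A\wedge C + C\wedge A$ exactly as in \eqref{eq:dAB}, leaving only the $e_0$- and $\omega$-contributions, which should assemble to $-\epsilon\omega\mathcal I$. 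This is a finite, mechanical block-matrix computation.

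For part (b), $Q^\delta = (B+\delta C)\wedge(B+\delta C) = B\wedge B + \delta(B\wedge C + C\wedge B) + \delta^2 C\wedge C$. One has $B\wedge B$ from \eqref{eq:BwedgeB}; $C\wedge C$ is computed from \eqref{eq:C} using the bracket identities of Lemma~\ref{lem: computational rules} (e.g.~\eqref{eq:box.e.Je.identity}), and the cross term $B\wedge C + C\wedge B$ similarly. The task is then purely bookkeeping: regroup the resulting $7\times7$ matrix of $2$-forms so that the $e_0$-linear pieces land in $Q^\delta_-$, the basic (horizontal) pieces land in $Q^\delta_+$, and the $\delta^2$-part with the $[e\times e + Je\times Je]$ and $[e]\wedge[Je]-[Je]\wedge[e]$ entries lands in $Q_0$, matching the normalizations in \eqref{eq:Qdelta-}--\eqref{eq:Q0}. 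Lemma~\ref{lem:basic} is what guarantees these groupings are meaningful (the entries of $Q^\delta_+$ are genuinely basic $(1,1)$ or $(2,0)+(0,2)$ forms, etc.), though strictly it is not needed for the identity itself.

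The main obstacle is simply the sheer volume and sign-discipline of the block-matrix wedge computations, especially keeping track of the anticommutativity of $1$-forms under $\wedge$ when multiplying matrices whose entries are $1$-forms (so $(XY)^{\mathrm T} = -Y^{\mathrm T}X^{\mathrm T}$ for such matrices, affecting symmetry of the blocks) and correctly applying the $[\,\cdot\,]$-bracket wedge rules from Lemma~\ref{lem: computational rules}. There is no conceptual difficulty — both parts are "an elementary computation" of the type flagged before Proposition~\ref{prop:str.eqs.2} — but the risk of an arithmetic slip in reassembling $Q^\delta$ is real, so I would organize the expansion by collecting coefficients of $e_0\wedge(\cdot)$ versus purely-horizontal terms from the outset rather than expanding all nine blocks independently.
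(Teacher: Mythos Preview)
Your proposal is correct and follows essentially the same approach as the paper: expand $R^{\delta,k}_\epsilon$ bilinearly in $A$ and $\Theta=\tfrac{k\epsilon}{2}(B+\delta C)$, reuse \eqref{eq:dAB} for the $B$-part, compute $dC+A\wedge C+C\wedge A=-\epsilon\omega\mathcal I$ block-by-block via Propositions~\ref{prop:str.eqs} and \ref{prop:str.eqs.2}, and then work out $(B+\delta C)\wedge(B+\delta C)$ using Lemma~\ref{lem: computational rules}. The only cosmetic difference is that the paper squares the combined matrix $B+\delta C$ from \eqref{eq:BdeltaC} row-by-row rather than first splitting into $B\wedge B+\delta(B\wedge C+C\wedge B)+\delta^2 C\wedge C$; either bookkeeping yields the decomposition \eqref{eq:BdeltaC2}.
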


\begin{proof}
We begin by observing that, by Corollary \ref{cor:conn.delta.k} and \eqref{eq:dAB},
\begin{align}
    R^{\delta,k}_\epsilon
    &=d\theta_{\epsilon}^{\delta,k} +\theta_{\epsilon}^{\delta,k} \wedge\theta_{\epsilon}^{\delta,k}
    \nonumber\\
    &=d A +\frac{k\epsilon}{2}dB +\frac{k\delta\epsilon}{2}d C+ \left(A+\frac{k\epsilon}{2}B +\frac{k\epsilon\delta}{2}C\right)\wedge \left(A+\frac{k\epsilon}{2}B +\frac{k\epsilon\delta}{2}C\right)
    \nonumber\\
    &= F_A+ \frac{k\epsilon}{2}(d B+ A\wedge B+B\wedge A) +\frac{k\epsilon\delta}{2}(dC +A\wedge C+C\wedge A)+ \frac{k^2\epsilon^2}{4}(B+\delta C)\wedge (B+\delta C)
    \nonumber\\
    &=F_A+\frac{k\epsilon^2}{2}\omega\mathcal{I}+\frac{k\epsilon\delta}{2}(dC +A\wedge C+C\wedge A)+ \frac{k^2\epsilon^2}{4}(B+\delta C)\wedge (B+\delta C).
    \label{eq:R.delta.k.1}
\end{align}

We may easily compute $dC+A\wedge C+C\wedge A$ appearing in \eqref{eq:R.delta.k.1}.
We first see that
\begin{equation*}
    (dC+A\wedge C+C\wedge A)_{1j}
    =(dB+A\wedge B+B\wedge A)_{1j}=0.
\end{equation*}
Therefore,
\begin{equation*}
    (dC+A\wedge C+C\wedge A)_{j1}=0
\end{equation*}
as well by skew-symmetry.  We may therefore write $dC+A\wedge C+C\wedge A$ in the block form
\begin{equation*}
dC+A\wedge C+C\wedge A=\left(\begin{array}{ccc}
    0 & 0 & 0  \\
    0 & c & d \\
    0 & -d^{\rm T} & -c
\end{array}\right).
\end{equation*}
We then find that
\begin{align*}
    c&=-b\wedge e_0I-e_0I\wedge b -d([e])-a\wedge [e]-[e]\wedge a+b\wedge [Je]-[Je]\wedge b=0
\end{align*}
using the structure equations \eqref{eq:str.eqs.2} in Proposition \ref{prop:str.eqs.2}.  We also find that
\begin{align*}
    d&=d(e_0)I+d([Je])+a\wedge e_0I+e_0I\wedge a+a\wedge [Je]+[Je]\wedge a+b\wedge [e]-[e]\wedge b\\
    &=\epsilon\omega I,
\end{align*}
using \eqref{eq:str.eqs.1} and  \eqref{eq:str.eqs.2}.
Overall, we deduce that
\begin{equation*}
dC+A\wedge C+C\wedge A=\epsilon\omega\left(\begin{array}{ccc} 0 & 0 & 0\\
0  & 0  & I  \\
0 & -I     & 0
\end{array}\right)=-\epsilon\omega\mathcal{I}.
\end{equation*}
Hence, \eqref{eq:R.delta.k} follows.

We now need only verify \eqref{eq:BdeltaC2}.  Recall from Corollary \ref{cor:conn.delta.k} that
\begin{equation}\label{eq:BdeltaC}
    B+\delta C=\left(\begin{array}{ccc}
0  & (1+\delta)Je^{\rm T}  & -(1+\delta)e^{\rm T} \\
-(1+\delta)Je & -\delta[e]    & -(1-\delta)e_0I+\delta[Je]\\
(1+\delta)e &  (1-\delta)e_0I+\delta[Je]  & \delta[e]
\end{array}\right).
\end{equation}
Using Lemma \ref{lem: computational rules}, we start with the first row of $(B+\delta C)\wedge (B+\delta C)$ and find the non-zero entries
\begin{align*}
    (1-\delta)(1+\delta)e_0\wedge e^{\rm T}-\delta(1+\delta)\big(Je^{\rm T}\wedge [e] &+e^{\rm T}\wedge [Je]\big)  =(1-\delta)(1+\delta)e_0\wedge e^{\rm T}+2\delta(1+\delta)(e\times Je)^{\rm T}
\end{align*}
and 
\begin{align*}
 (1-\delta)(1+\delta)e_0\wedge Je^{\rm T}&+   \delta(1+\delta)\big(Je^{\rm T}\wedge [Je]-e^{\rm T}\wedge [e]\big)\\
&    =(1-\delta)(1+\delta)e_0\wedge Je^{\rm T}+\delta(1+\delta)(e\times e-Je\times Je)^{\rm T}.    
\end{align*}
Moving to the middle block and again using Lemma \ref{lem: computational rules}, we obtain
\begin{align*}
    -(1+\delta)^2Je\wedge Je^{\rm T}
    &+\delta^2\big([e]\wedge [e]+[Je]\wedge [Je]\big)-\delta(1-\delta)(e_0I\wedge [Je]-[Je]\wedge e_0I) \\
    &=-(1+\delta)^2Je\wedge Je^{\rm T} -\frac{1}{2}\delta^2[e\times e +Je\times Je] -2\delta(1-\delta)e_0\wedge[ Je].
\end{align*}
Similarly, for the bottom right block, we obtain
\begin{align*}
    -(1+\delta)^2e\wedge e^{\rm T}-\frac{1}{2}\delta^2[e\times e+Je\times Je]+2\delta(1-\delta)e_0\wedge[ Je].
\end{align*}
The remaining entries are defined by the middle right block, which is
\begin{align*}
    (1+\delta)^2 Je\wedge e^{\rm T}-\delta^2([e]\wedge [Je]-[Je]\wedge[e])-2\delta(1-\delta)e_0\wedge [e].
\end{align*}
Equation \eqref{eq:BdeltaC2} now follows.
\end{proof}

We now can specialize to the Bismut and Hull connections.

\begin{cor}
The curvature $R_{\theta^{+}_\epsilon}$ of the Bismut connection $\theta^{+}_\epsilon$ satisfies
\begin{equation}\label{eq:R.+}
R_{\theta^{+}_\epsilon}=F_A+ \frac{\epsilon^2}{4}(B+C)\wedge (B+C),
\end{equation}
where 
\begin{align}
\label{eq:BdeltaC2.+}
\begin{split}
    (B+C)\wedge (B+ C)
    =\;&2
    \left(\begin{array}{ccc} 
        0 & 2(e\times Je)^{\rm T} & (e\times e-Je\times Je)^{\rm T}\\ -2(e\times Je) & -2(Je\wedge Je^{\rm T}) & 2(Je\wedge e^{\rm T})\\
        -(e\times e-Je\times Je) & 2(e\wedge Je^{\rm T}) & - 2(e\wedge e^{\rm T})
    \end{array}\right)\\ 
    &+\frac{1}{2}
    \left(\begin{array}{ccc} 
        0 & 0 & 0 \\ 
        0 & -[e\times e+Je\times Je] & -2([e]\wedge [Je]-[Je]\wedge [e])\\
        0 & 2([e]\wedge [Je] -[Je]\wedge [e]) & -[e\times e+Je\times Je]
    \end{array}\right).
\end{split}
\end{align}
\end{cor}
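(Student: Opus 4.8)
The plan is to deduce this corollary by specializing Proposition~\ref{prop:curv.k} to $\delta=+1$ and $k=1$, which is exactly the combination producing the Bismut connection $\theta^+_\epsilon$ (cf.\ Definition~\ref{dfn:Bismut.k}). First I would substitute these values into the curvature formula \eqref{eq:R.delta.k}: the coefficient $\frac{k\epsilon^2(1-\delta)}{2}$ of the $\omega\mathcal{I}$ term vanishes identically when $\delta=1$, so that term disappears and one is left with $R_{\theta^+_\epsilon}=F_A+\frac{\epsilon^2}{4}(B+C)\wedge(B+C)$, which is \eqref{eq:R.+}. This vanishing is consistent with --- indeed it expresses --- the observation in Definition~\ref{dfn:Bismut.k} that $\theta^+_\epsilon$ has holonomy contained in $\rG_2$, so that its curvature must take values in $\mathfrak{g}_2\subseteq\Lambda^2$.

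Next I would evaluate the decomposition $Q^{\delta}=(1-\delta)Q^{\delta}_-+(1+\delta)Q^{\delta}_++\delta^2 Q_0$ from \eqref{eq:BdeltaC2} at $\delta=1$. The block $Q^{\delta}_-$ carries the coefficient $1-\delta=0$ and drops out, leaving $(B+C)\wedge(B+C)=Q^1=2\,Q^1_++Q_0$. Reading $Q^1_+$ off \eqref{eq:Qdelta+} at $\delta=1$ (so that $1+\delta=2$) and $Q_0$ off \eqref{eq:Q0}, one recovers precisely the two matrices displayed in \eqref{eq:BdeltaC2.+}, which completes the argument.

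The computation is entirely mechanical, so there is no serious obstacle; the only place warranting attention is bookkeeping in the off-diagonal blocks --- for instance, checking that the $(2,3)$ and $(3,2)$ entries assembled from $2Q^1_+$ and from $Q_0$ carry the correct signs, and that the (anti)symmetry pattern of $B+C$ recorded in \eqref{eq:BdeltaC} is respected throughout the multiplication via the identities of Lemma~\ref{lem: computational rules}. A useful final consistency check is to verify directly from the resulting matrix that $(B+C)\wedge(B+C)$ takes values in $\mathfrak{g}_2$, matching the holonomy statement for $\theta^+_\epsilon$ in Definition~\ref{dfn:Bismut.k}.
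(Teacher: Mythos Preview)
Your proposal is correct and follows exactly the approach the paper intends: the corollary is stated without proof because it is an immediate specialization of Proposition~\ref{prop:curv.k} at $\delta=1$, $k=1$, with the $\omega\mathcal{I}$ term vanishing and $(B+C)\wedge(B+C)=2Q^{1}_{+}+Q_{0}$ read off from \eqref{eq:BdeltaC2}--\eqref{eq:Q0}. Your additional remark linking the vanishing of the $\omega\mathcal{I}$ term to the $\mathfrak{g}_2$-holonomy observation in Definition~\ref{dfn:Bismut.k} is a nice consistency check that the paper does not spell out.
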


\begin{cor}
The curvature $R_{\theta_{\epsilon}^{-}}$ of the Hull connection $\theta_{\epsilon}^{-}$ satisfies
\begin{equation}
\label{eq:R.-}
    R_{\theta^{-}_\epsilon}
    =F_A +\epsilon^2\omega\mathcal{I}+ \frac{\epsilon^2}{4}(B- C)\wedge (B- C),
\end{equation}
where $\mathcal{I}$ is given in \eqref{eq:mathcal.I} and
\begin{align}
\label{eq:BdeltaC2.-}
\begin{split}
    (B- C)&\wedge (B- C)\\
    =\;& 4e_0\wedge 
    \left(\begin{array}{ccc} 
        0 & 0 & 0\\ 
        0 & [Je] & [e] \\
        0 & [e] & -[Je]
    \end{array}\right)
    +\frac{1}{2}
    \left(\begin{array}{ccc} 
        0 & 0 & 0 \\ 
        0 & -[e\times e+Je\times Je] & -2([e]\wedge [Je]-[Je]\wedge [e])\\
        0 & 2([e]\wedge [Je] -[Je]\wedge [e]) & -[e\times e+Je\times Je]
    \end{array}\right).
\end{split}
\end{align}
\end{cor}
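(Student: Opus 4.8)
The plan is to read off both displayed identities from Proposition~\ref{prop:curv.k} by specialising to the Hull parameters, since by Definition~\ref{dfn:Hull.k} the Hull connection $\theta^-_\epsilon$ is precisely $\theta^{\delta,k}_\epsilon$ with $\delta=-1$ and $k=1$. Substituting these values into the master formula \eqref{eq:R.delta.k}, the coefficient $\frac{k\epsilon^2(1-\delta)}{2}$ of $\omega\mathcal{I}$ becomes $\epsilon^2$, the coefficient $\frac{k^2\epsilon^2}{4}$ of $Q^{\delta}$ becomes $\frac{\epsilon^2}{4}$, and $Q^{-1}=(B-C)\wedge(B-C)$; this is exactly \eqref{eq:R.-}.

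It then remains to simplify $Q^{-1}$ using the decomposition \eqref{eq:BdeltaC2}, namely $Q^{\delta}=(1-\delta)Q_-^{\delta}+(1+\delta)Q_+^{\delta}+\delta^2Q_0$. At $\delta=-1$ the middle term drops out entirely, since the factor $1+\delta$ vanishes, while $\delta^2=1$; hence $Q^{-1}=2\,Q_-^{-1}+Q_0$. Substituting $\delta=-1$ into \eqref{eq:Qdelta-}, every entry in the first row and first column of $Q_-^{\delta}$ vanishes (each carries a factor $1+\delta$), and the surviving lower-right $2\times 2$ block, with entries $-2\delta[Je]$, $-2\delta[e]$, $-2\delta[e]$, $2\delta[Je]$, becomes $2[Je]$, $2[e]$, $2[e]$, $-2[Je]$; so $2\,Q_-^{-1}$ is exactly the first summand $4e_0\wedge(\cdots)$ of \eqref{eq:BdeltaC2.-}. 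The term $Q_0$ from \eqref{eq:Q0} is parameter-free and coincides verbatim with the second summand of \eqref{eq:BdeltaC2.-}, so adding the two yields the claimed formula for $(B-C)\wedge(B-C)$.

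As an independent check one may instead compute $(B-C)\wedge(B-C)$ directly from \eqref{eq:B} and \eqref{eq:C}: the matrix $B-C$ has vanishing first row and column and lower-right $2\times 2$ block with entries $[e]$, $-2e_0I-[Je]$, $2e_0I-[Je]$, $-[e]$, so one multiplies this $2\times 2$ matrix of $1$- and $2$-forms with itself, separating the $e_0$-linear contributions from the basic ones and invoking the bracket identities of Lemma~\ref{lem: computational rules} (in particular $[e]\wedge[e]+[Je]\wedge[Je]=-\tfrac{1}{2}[e\times e+Je\times Je]$, together with the cross terms producing $[e]\wedge[Je]-[Je]\wedge[e]$). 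There is essentially no obstacle here: the only points needing care are the signs produced by the factor $2\delta$ at $\delta=-1$ and the remark that, because the Hull torsion has the opposite sign to the Bismut torsion, the $\omega\mathcal{I}$ term survives — in contrast to the Bismut case \eqref{eq:R.+}, where the analogous coefficient $\frac{k\epsilon^2(1-\delta)}{2}$ vanishes at $\delta=+1$. The substantive computation has already been carried out in Proposition~\ref{prop:curv.k}.
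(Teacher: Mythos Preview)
Your proposal is correct and follows exactly the approach implicit in the paper: the corollary is obtained by specialising Proposition~\ref{prop:curv.k} at $\delta=-1$, $k=1$, reading off both the coefficient of $\omega\mathcal{I}$ and the decomposition $Q^{-1}=2Q_-^{-1}+Q_0$ from \eqref{eq:BdeltaC2}--\eqref{eq:Q0}. The paper gives no separate proof for this corollary, so your write-up (including the optional direct check) is already more detailed than what appears there.
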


\subsection{Connections: an extra twist}

It will be useful to ``twist'' our connection by multiples of $e_0\mathcal{I}$.  To discern the impact of this twist on the curvature of the connection, we have the following lemma.

\begin{lemma}
\label{lem:twist}
    The local connection matrices $A,B,C$ from Corollary \ref{cor:conn.eps} and Proposition \ref{prop:H.torsion} satisfy
\begin{align}
    A\wedge e_0\mathcal{I}+e_0\mathcal{I}\wedge A&=0,
\label{eq:twist.A}\\
    B\wedge e_0\mathcal{I}+e_0\mathcal{I}\wedge  B &=e_0\wedge 
    \left(\begin{array}{ccc} 
        0 & e^{\rm T} & Je^{\rm T}\\
        -e & 0 & 0\\
        -Je & 0 & 0
    \end{array}\right),
\label{eq:twist.B}\\
    C\wedge e_0\mathcal{I}+e_0\mathcal{I}\wedge C &=e_0\wedge 
    \left(\begin{array}{ccc} 
        0 & e^{\rm T} & Je^{\rm T}\\
        -e & -2[Je] & -2[e]\\
        -Je & -2[e] & 2[Je]
    \end{array}\right).
\label{eq:twist.C}
\end{align}
\end{lemma}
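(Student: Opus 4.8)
The plan is to verify the three identities in Lemma~\ref{lem:twist} by direct block-matrix computation, using the explicit forms of $A$, $B$, $C$ from \eqref{eq:A}, \eqref{eq:B}, \eqref{eq:C} together with the definition $\mathcal{I}=\left(\begin{smallmatrix}0&0&0\\0&0&-I\\0&I&0\end{smallmatrix}\right)$ from \eqref{eq:mathcal.I} and the box-vector matrix operations of Definition~\ref{dfn:box.vector}.  Since $e_0\mathcal{I}$ is a matrix of $1$-forms, the anticommutator $X\wedge e_0\mathcal{I}+e_0\mathcal{I}\wedge X$ is bilinear and can be computed blockwise; moreover every entry of $e_0\mathcal{I}$ is a multiple of the single $1$-form $e_0$, so all products will carry an overall factor of $e_0\wedge(\cdot)$, which already explains the shape of the right-hand sides.

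First I would record that left-multiplication by $\mathcal{I}$ swaps the second and third block-rows (with a sign), and right-multiplication by $\mathcal{I}$ swaps the second and third block-columns (with a sign).  Concretely, writing a $7\times 7$ matrix in $1+3+3$ block form as $X=\left(\begin{smallmatrix}X_{00}&X_{0e}&X_{0J}\\X_{e0}&X_{ee}&X_{eJ}\\X_{J0}&X_{Je}&X_{JJ}\end{smallmatrix}\right)$, one has $e_0\mathcal{I}\wedge X = e_0\wedge\left(\begin{smallmatrix}0&0&0\\-X_{J0}&-X_{Je}&-X_{JJ}\\X_{e0}&X_{ee}&X_{eJ}\end{smallmatrix}\right)$ and $X\wedge e_0\mathcal{I} = e_0\wedge\left(\begin{smallmatrix}0&X_{0J}&-X_{0e}\\0&X_{eJ}&-X_{ee}\\0&X_{JJ}&-X_{Je}\end{smallmatrix}\right)$ — here I must be careful with the sign coming from moving $e_0$ past a $1$-form entry of $X$, but since all entries of $A,B,C$ are $1$-forms the two conventions can be fixed consistently.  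Then \eqref{eq:twist.A} follows because $A$ is block-diagonal of the special form $\left(\begin{smallmatrix}0&0&0\\0&a&b\\0&-b&a\end{smallmatrix}\right)$, so the two anticommutator terms cancel exactly; this is the quickest of the three.

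For \eqref{eq:twist.B}, I substitute $B=\left(\begin{smallmatrix}0&Je^{\rm T}&-e^{\rm T}\\-Je&0&-e_0I\\e&e_0I&0\end{smallmatrix}\right)$ into the two blockwise formulas above.  The $e_0I$ entries produce terms $e_0\wedge e_0 I=0$, so only the entries involving $e$ and $Je$ survive, and after collecting them one reads off exactly $e_0\wedge\left(\begin{smallmatrix}0&e^{\rm T}&Je^{\rm T}\\-e&0&0\\-Je&0&0\end{smallmatrix}\right)$.  For \eqref{eq:twist.C}, I use $C=\left(\begin{smallmatrix}0&Je^{\rm T}&-e^{\rm T}\\-Je&-[e]&e_0I+[Je]\\e&-e_0I+[Je]&[e]\end{smallmatrix}\right)$; again the $e_0I$ pieces die after wedging with $e_0$, but now the $[e]$ and $[Je]$ blocks contribute, and combining the swapped rows/columns yields the claimed $e_0\wedge\left(\begin{smallmatrix}0&e^{\rm T}&Je^{\rm T}\\-e&-2[Je]&-2[e]\\-Je&-2[e]&2[Je]\end{smallmatrix}\right)$.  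The main obstacle is purely bookkeeping: getting the signs right when commuting $e_0$ past the $1$-form entries and when applying the skew/symmetry transpose conventions implicit in the block structure of $\mathcal{I}$; there is no conceptual difficulty, and one should double-check a couple of scalar entries (say the $(1,2)$ and $(2,3)$ blocks) against the stated answer to be safe.
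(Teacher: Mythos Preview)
Your proposal is correct and follows essentially the same direct block-matrix computation as the paper's proof. The only notable difference is for \eqref{eq:twist.A}: where you verify cancellation by plugging in the block form of $A$, the paper observes more conceptually that $A$ takes values in $\mathfrak{su}(3)\subseteq\mathfrak{u}(3)$ and $\mathcal{I}$ is central in $\mathfrak{u}(3)$, which immediately gives the anticommutator zero; your sign caveat is well placed---once you commute $e_0$ past the $1$-form entries of $X$ (introducing the extra minus sign in your formula for $X\wedge e_0\mathcal{I}$), the summed block formula reproduces exactly the stated right-hand sides for $B$ and $C$.
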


\begin{proof}
Given that $A$ in \eqref{eq:A} takes values in $\mathfrak{su}(3)\subseteq\mathfrak{u}(3)$ and $\mathcal{I}$ in \eqref{eq:mathcal.I} is central in $\mathfrak{u}(3)$, we immediately deduce \eqref{eq:twist.A}.
Moreover, we see from \eqref{eq:B}, \eqref{eq:C} and \eqref{eq:mathcal.I} that
$$B\wedge e_0\mathcal{I}=\left(\begin{array}{ccc} 0 & -(e\wedge e_0)^{\rm T} & -(Je\wedge e_0)^{\rm T}\\
0 & 0 & 0\\
0 & 0 & 0
\end{array}\right),\qquad e_0\mathcal{I}\wedge B=\left(\begin{array}{ccc} 0 & 0 & 0 \\
-e_0\wedge e & 0 & 0\\
-e_0\wedge Je & 0 & 0\end{array}\right)$$
and
$$C\wedge e_0\mathcal{I}=\left(\begin{array}{ccc} 0 & -(e\wedge e_0)^{\rm T} & -(Je\wedge e_0)^{\rm T}\\
0 & [Je]\wedge e_0 & [e]\wedge e_0\\
0 & [e]\wedge e_0  & -[Je]\wedge e_0 \end{array}\right),$$ $$e_0\mathcal{I}\wedge C=\left(\begin{array}{ccc}0 & 0 & 0\\
-e_0\wedge e & -e_0\wedge [Je] & -e_0\wedge [e]\\
-e_0\wedge Je & -e_0\wedge [e] & e_0\wedge [Je]\end{array}\right).$$
Equations \eqref{eq:twist.B} and \eqref{eq:twist.C} then follow.
\end{proof}

The previous lemma allows us to compute the curvature of a twisted connection, in particular establishing Theorem \ref{thm: Main Theorem}--(iii),  as follows.

\begin{prop}
\label{prop:twist.curvature}
    In the local coframe from Definition \ref{dfn:coframe}, define a connection $\theta^{\delta,k}_{\epsilon,m} $ on $TK$ by
    \begin{equation}
    \label{eq:theta.delta.m.k}
    \theta^{\delta,k}_{\epsilon,m} =\theta^{\delta,k}_{\epsilon}+\frac{km\epsilon}{2}e_0\mathcal{I}.
    \end{equation}
    Then its torsion is
    \begin{equation}\label{eq:H.k.delta.eps.m}
    H^{\delta,k}_{\epsilon,m} =\big(1-k-\frac{km}{2}\big)\epsilon\omega\otimes e_0+\frac{km\epsilon}{2}e_0\wedge\omega+k\delta H_{\epsilon}    
    \end{equation}
    and its curvature is given by
    \begin{equation}
    \label{eq:curv.delta.m.k}
        R^{\delta,k}_{\epsilon,m} 
        =F_A  +\frac{k\epsilon^2(1-\delta+m)}{2}\omega\mathcal{I}  +\frac{k^2\epsilon^2}{4}Q^{\delta}_m
    \end{equation}
    where 
    \begin{equation}
    \label{eq:Q.delta.m}
    Q^{\delta}_m=
    \left(1-\delta+m\right)Q_-^{\delta} +(1+\delta)Q_+^{\delta}+\delta^2Q_0
    \end{equation}
    for $Q^{\delta}_-,Q^{\delta}_+,Q_0$ defined in \eqref{eq:Qdelta-},  \eqref{eq:Qdelta+} and \eqref{eq:Q0}, respectively. 
\end{prop}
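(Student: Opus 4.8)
The statement is a computation that builds directly on the curvature formula for $\theta_\epsilon^{\delta,k}$ in Proposition~\ref{prop:curv.k} together with the twist lemma (Lemma~\ref{lem:twist}). First I would record the torsion: by Corollary~\ref{cor:conn.delta.k}, $\theta_\epsilon^{\delta,k}$ has torsion $(1-k)\epsilon\omega\otimes e_0+k\delta H_\epsilon$, and adding the twist term $\frac{km\epsilon}{2}e_0\mathcal I$ to the connection matrix changes the torsion by exactly $\frac{km\epsilon}{2}\,e_0\mathcal I$ applied to the coframe vector. Using \eqref{eq:twist.A}--\eqref{eq:twist.C} is not even needed for the torsion: one just computes $\frac{km\epsilon}{2}e_0\mathcal I\wedge(e_0,e,Je)^{\rm T}$, which contributes $\frac{km\epsilon}{2}(0,\ -e_0\wedge Je,\ e_0\wedge e)^{\rm T}$, i.e.\ raising an index on $\frac{km\epsilon}{2}e_0\wedge\omega$ and subtracting the corresponding symmetric piece; unwinding the identification in Proposition~\ref{prop:H.torsion} (compare with how $(1-k)\epsilon\omega\otimes e_0$ arises there) yields $H^{\delta,k}_{\epsilon,m}=(1-k-\tfrac{km}{2})\epsilon\omega\otimes e_0+\tfrac{km\epsilon}{2}e_0\wedge\omega+k\delta H_\epsilon$ as in \eqref{eq:H.k.delta.eps.m}.

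For the curvature, I would write $\theta^{\delta,k}_{\epsilon,m}=\theta^{\delta,k}_\epsilon+\frac{km\epsilon}{2}e_0\mathcal I$ and expand $R^{\delta,k}_{\epsilon,m}=d\theta^{\delta,k}_{\epsilon,m}+\theta^{\delta,k}_{\epsilon,m}\wedge\theta^{\delta,k}_{\epsilon,m}$. This gives
\[
R^{\delta,k}_{\epsilon,m}=R^{\delta,k}_\epsilon+\frac{km\epsilon}{2}d(e_0\mathcal I)+\frac{km\epsilon}{2}\big(\theta^{\delta,k}_\epsilon\wedge e_0\mathcal I+e_0\mathcal I\wedge\theta^{\delta,k}_\epsilon\big)+\frac{k^2m^2\epsilon^2}{4}e_0\mathcal I\wedge e_0\mathcal I.
\]
Now $d(e_0\mathcal I)=de_0\,\mathcal I=\epsilon\omega\mathcal I$ by \eqref{eq:str.eqs.1}, and $e_0\mathcal I\wedge e_0\mathcal I=e_0\wedge e_0\,\mathcal I^2=0$. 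The cross term splits using $\theta^{\delta,k}_\epsilon=A+\frac{k\epsilon}{2}B+\frac{k\epsilon\delta}{2}C$ and Lemma~\ref{lem:twist}: the $A$-part vanishes by \eqref{eq:twist.A}, and the $B$- and $C$-parts contribute $\frac{k\epsilon}{2}\cdot\frac{km\epsilon}{2}$ times \eqref{eq:twist.B} plus $\frac{k\epsilon\delta}{2}\cdot\frac{km\epsilon}{2}$ times \eqref{eq:twist.C}. Reading off \eqref{eq:Qdelta-}, one recognizes that $\bigl(\text{\eqref{eq:twist.B}}+\delta\cdot\text{\eqref{eq:twist.C}}\bigr)$ is precisely $Q^\delta_-$ (up to the scalar bookkeeping already built into \eqref{eq:Qdelta-}); indeed $Q^\delta_-=e_0\wedge\bigl(\begin{smallmatrix}0&(1+\delta)e^{\rm T}&(1+\delta)Je^{\rm T}\\-(1+\delta)e&-2\delta[Je]&-2\delta[e]\\-(1+\delta)Je&-2\delta[e]&2\delta[Je]\end{smallmatrix}\bigr)$, which matches \eqref{eq:twist.B}$+\delta\cdot$\eqref{eq:twist.C} term by term. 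Hence the cross term equals $\frac{k^2\epsilon^2 m}{4}Q^\delta_-$, and the $d(e_0\mathcal I)$ term adds $\frac{km\epsilon^2}{2}\omega\mathcal I$.

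Assembling, $R^{\delta,k}_{\epsilon,m}=R^{\delta,k}_\epsilon+\frac{km\epsilon^2}{2}\omega\mathcal I+\frac{k^2\epsilon^2 m}{4}Q^\delta_-$, and substituting \eqref{eq:R.delta.k} for $R^{\delta,k}_\epsilon$ gives
\[
R^{\delta,k}_{\epsilon,m}=F_A+\frac{k\epsilon^2(1-\delta+m)}{2}\omega\mathcal I+\frac{k^2\epsilon^2}{4}\bigl((1-\delta)Q^\delta_-+(1+\delta)Q^\delta_++\delta^2Q_0+mQ^\delta_-\bigr),
\]
which is exactly \eqref{eq:curv.delta.m.k} with $Q^\delta_m=(1-\delta+m)Q^\delta_-+(1+\delta)Q^\delta_++\delta^2Q_0$ as in \eqref{eq:Q.delta.m}. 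The only genuinely delicate point—and the one I would be most careful about—is matching signs and scalar factors in the identification $\text{\eqref{eq:twist.B}}+\delta\,\text{\eqref{eq:twist.C}}=Q^\delta_-$, since the entries of $Q^\delta_-$ in \eqref{eq:Qdelta-} already absorb a factor coming from the $\frac14$ normalization; I would verify this block by block against the explicit matrices in Lemma~\ref{lem:twist} rather than trust a shortcut.
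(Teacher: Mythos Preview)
Your proposal is correct and follows essentially the same approach as the paper's proof: expand the curvature of the twisted connection, use $de_0=\epsilon\omega$ and $e_0\mathcal I\wedge e_0\mathcal I=0$, apply Lemma~\ref{lem:twist} to identify the cross term $\theta^{\delta,k}_\epsilon\wedge e_0\mathcal I+e_0\mathcal I\wedge\theta^{\delta,k}_\epsilon$ with $\frac{k\epsilon}{2}Q^\delta_-$, and then substitute \eqref{eq:R.delta.k}. Your caution about the identification $\text{\eqref{eq:twist.B}}+\delta\cdot\text{\eqref{eq:twist.C}}=Q^\delta_-$ is well placed but unfounded in the end: there is no hidden normalization, the matrices match entry by entry, and the paper verifies this in exactly the block-by-block manner you suggest. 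For the torsion, your phrase ``subtracting the corresponding symmetric piece'' is slightly imprecise; what is really happening is that raising an index on $e_0\wedge\omega$ gives $(\omega,-e_0\wedge Je,e_0\wedge e)^{\rm T}$, so your vector $(0,-e_0\wedge Je,e_0\wedge e)^{\rm T}$ differs from it by $(\omega,0,0)^{\rm T}$, which is the $\omega\otimes e_0$ term that combines with the existing $(1-k)\epsilon\omega\otimes e_0$ to give the coefficient $1-k-\tfrac{km}{2}$.
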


\begin{proof}
Using \eqref{eq:H.k.delta.eps} we see that
\begin{align*}
    d\left(\begin{array}{c} e_0\\ e\\ Je\end{array}\right)&=-\theta^k_\delta\wedge\left(\begin{array}{c} e_0\\ e\\ Je\end{array}\right)+(1-k)\epsilon\left(\begin{array}{c} \omega \\ 0\\ 0 \end{array}\right)+k\delta H_{\epsilon}\\
    &=-\theta^{\delta,k}_{\epsilon,m} \wedge\left(\begin{array}{c} e_0\\ e\\ Je\end{array}\right)+\frac{km\epsilon}{2}e_0\mathcal{I}\wedge\left(\begin{array}{c} e_0\\ e\\ Je\end{array}\right)+(1-k)\epsilon\left(\begin{array}{c} \omega \\ 0\\ 0 \end{array}\right)+k\delta H_{\epsilon}.
\end{align*}
Since 
\begin{equation*}
    \frac{km\epsilon}{2}e_0\mathcal{I}\wedge\left(\begin{array}{c} e_0\\ e\\ Je\end{array}\right)=\frac{km\epsilon}{2}\left(\begin{array}{c}0\\ -e_0\wedge Je\\ e_0\wedge e\end{array}\right)
\end{equation*}
and raising an index on $e_0\wedge\omega$ gives the vector-valued 2-form
$$\left(\begin{array}{c} \omega \\ -e_0\wedge Je\\ e_0\wedge e \end{array}\right),$$
we quickly deduce \eqref{eq:H.k.delta.eps.m}.

We know by definition that
\begin{align*}
    R^{\delta,k}_{\epsilon,m} &=d(\theta^{\delta,k}_{\epsilon}+\frac{km\epsilon}{2}e_0\mathcal{I})+(\theta^{\delta,k}_{\epsilon}+\frac{km\epsilon}{2}e_0\mathcal{I})\wedge (\theta^{\delta,k}_{\epsilon}+\frac{km\epsilon}{2}e_0 \mathcal{I})\\
    &=
R^{\delta,k}_\epsilon+\frac{km\epsilon^2}{2}\omega\mathcal{I}+\frac{km\epsilon}{2}(\theta^{\delta,k}_{\epsilon}\wedge e_0 \mathcal{I}+e_0 \mathcal{I}\wedge\theta^{\delta,k}_{\epsilon}).
\end{align*}
Lemma \ref{lem:twist} implies that
\begin{align*}
    \theta^{\delta,k}_{\epsilon}\wedge e_0\mathcal{I}+e_0\mathcal{I}\wedge\theta^{\delta,k}_{\epsilon}&=\left(A+\frac{k\epsilon}{2}(B+\delta C)\right)\wedge e_0\mathcal{I}+e_0\mathcal{I}\wedge\left(A+\frac{k\epsilon}{2}(B+\delta C)\right)\\
    &=\frac{k\epsilon}{2} e_0\wedge \left(\begin{array}{ccc}
    0 & (1+\delta)e^{\rm T} & (1+\delta) Je^{\rm T}\\
    -(1+\delta)e & -2\delta [Je] & -2\delta [e]\\
    -(1+\delta)Je & -2\delta[e] & 2\delta[Je]
    \end{array}\right)=\frac{k\epsilon}{2}Q^{\delta}_-
\end{align*}
by \eqref{eq:Qdelta-}.
  The result now follows from Proposition \ref{prop:curv.k}.
\end{proof}

The following observation, which may have potential interest, is immediate from \eqref{eq:H.k.delta.eps.m}:

\begin{cor}
    The connection $\theta^{\delta,k}_{\epsilon,m} $ in \eqref{eq:theta.delta.m.k} has totally skew-symmetric torsion if, and only if,
    $$
    1-k\left(1+\frac{m}{2}\right)=0.
    $$
\end{cor}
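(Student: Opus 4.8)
The plan is to extract the conclusion directly from the torsion formula \eqref{eq:H.k.delta.eps.m} of Proposition~\ref{prop:twist.curvature}, by sorting its three summands according to their symmetry type as $(0,3)$-tensors.

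First recall that the torsion of a metric connection on $K$ is a section of $\Lambda^2T^*K\otimes TK$, which the metric $g_\epsilon$ identifies with $\Lambda^2T^*K\otimes T^*K\subset\bigotimes^3 T^*K$; the connection has \emph{totally skew-symmetric torsion} precisely when this tensor lies in the linear subspace $\Lambda^3T^*K$. By \eqref{eq:H.k.delta.eps.m} the torsion of $\theta^{\delta,k}_{\epsilon,m}$ is the sum of the three terms $\big(1-k-\tfrac{km}{2}\big)\epsilon\,\omega\otimes e_0$, $\tfrac{km\epsilon}{2}\,e_0\wedge\omega$ and $k\delta H_\epsilon$. Now $e_0\wedge\omega=\epsilon\,\eta\wedge d\eta$ and $H_\epsilon=-\epsilon^2\eta\wedge\omega+\epsilon\,\Re\Omega$ (Lemma~\ref{lem:flux}) are genuine $3$-forms, so the second and third terms already lie in $\Lambda^3T^*K$; since $\Lambda^3T^*K$ is a linear subspace, the full torsion is totally skew-symmetric if and only if the remaining term $\big(1-k-\tfrac{km}{2}\big)\epsilon\,\omega\otimes e_0$ is.

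It then remains only to observe that $\omega\otimes e_0$ itself is \emph{not} totally skew-symmetric. As a $(0,3)$-tensor it sends $(X,Y,Z)\mapsto\omega(X,Y)\,e_0(Z)$; in the orthonormal coframe of Definition~\ref{dfn:coframe} it takes the value $\omega(e_1^\sharp,(Je_1)^\sharp)\,e_0(e_0^\sharp)=1$ on $\big(e_1^\sharp,(Je_1)^\sharp,e_0^\sharp\big)$, whereas swapping the last two arguments produces $\omega(e_1^\sharp,e_0^\sharp)\,e_0\big((Je_1)^\sharp\big)=0$ because $\omega$ is horizontal and $e_0=\epsilon\eta$ annihilates horizontal vectors. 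Hence $\omega\otimes e_0$ is not antisymmetric in its last two slots, so $\big(1-k-\tfrac{km}{2}\big)\epsilon\,\omega\otimes e_0\in\Lambda^3T^*K$ forces $\big(1-k-\tfrac{km}{2}\big)\epsilon=0$; conversely, if this coefficient vanishes then the torsion reduces to $\tfrac{km\epsilon}{2}\,e_0\wedge\omega+k\delta H_\epsilon\in\Lambda^3T^*K$. Since $\epsilon>0$ throughout (Definition~\ref{dfn:G2.str}), this is equivalent to $1-k\big(1+\tfrac{m}{2}\big)=0$, which is the claim.

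I do not expect a genuine obstacle here: the statement is flagged as immediate from \eqref{eq:H.k.delta.eps.m}, and the only point needing more than bookkeeping is the one-line verification above that $\omega\otimes e_0$ fails to be a $3$-form.
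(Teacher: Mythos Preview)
Your argument is correct and matches the paper's approach: the paper simply declares the corollary ``immediate from \eqref{eq:H.k.delta.eps.m}'', and you have spelled out exactly the reasoning that makes it immediate---namely, that the second and third summands of \eqref{eq:H.k.delta.eps.m} are already $3$-forms, while $\omega\otimes e_0$ is not, so total skew-symmetry forces the first coefficient to vanish. Your explicit check that $\omega\otimes e_0$ fails antisymmetry in the last two slots is a clean way to verify the one non-trivial point.
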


\subsection{The \texorpdfstring{G\textsubscript{2}}{G2}-instanton condition}

One way to check the $\rG_2$-instanton condition is to verify the vanishing of the wedge product of the curvature with $\psi_\epsilon$, cf.~\eqref{eq:psi.eps}. Before doing this, we make some elementary observations.

\begin{lemma}
\label{lem:ImOmega.identities} 
    In the local coframe \eqref{eq:coframe} on a contact Calabi--Yau $7$-manifold as in Definition \ref{dfn:G2.str}, and using the notation from Definition \ref{dfn:box.vector}, the following identities hold:
\begin{align}
    2(e\times Je)\wedge\Im\Omega &= 4e\wedge\frac{\omega^2}{2},&
    (e\times e-Je\times Je)\wedge\Im\Omega &= 4Je\wedge \frac{\omega^2}{2},\label{eq:exJe.ImOmega}\\
    e\wedge e^{\rm T}\wedge\Im\Omega &=[Je]\wedge \frac{\omega^2}{2}, &
    Je\wedge Je^{\rm T}\wedge\Im\Omega &=-[Je]\wedge \frac{\omega^2}{2},\label{eq:ee.ImOmega}\\
    [e\times e+Je\times Je]\wedge\Im\Omega&=0, & ([e]\wedge [Je]-[Je]\wedge[e])\wedge\Im\Omega &=0,\label{eq:eJe.ImOmega}\\
    [e\times e+Je\times Je]\wedge\frac{\omega^2}{2}&=0, & 
    ([e]\wedge[Je]-[Je]\wedge [e])\wedge\frac{\omega^2}{2}&=-4\frac{\omega^3}{6}I,\label{eq:eJe.omega1}\\
    e\wedge Je^{\rm T}\wedge\Im\Omega &=[e]\wedge\frac{\omega^2}{2}, & e\wedge Je^{\rm T}\wedge\frac{\omega^2}{2}&=\frac{\omega^3}{6}I.\label{eq:eJe.omega2}\\
    Je\wedge e^{\rm T}\wedge\Im\Omega &=[e]\wedge\frac{\omega^2}{2}, & Je\wedge e^{\rm T}\wedge\frac{\omega^2}{2}&=-\frac{\omega^3}{6}I.\label{eq:eJe.omega3}
\end{align}
\end{lemma}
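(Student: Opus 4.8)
The plan is to verify every identity by an explicit computation in the local $\SU(3)$-coframe of Definition~\ref{dfn:coframe}. Since all the forms involved are pullbacks under $\pi$ of forms on the Calabi--Yau base $V$, and $\pi^{\ast}$ is injective on differential forms, it suffices to work on $V$, where $\omega$, $\Re\Omega$, $\Im\Omega$ are given by the closed expressions \eqref{eq:omega.coframe}, \eqref{eq:Re.Omega.coframe} and \eqref{eq:Im.Omega.coframe}. First I would record the auxiliary expansions $\tfrac{\omega^2}{2}=e_1\wedge Je_1\wedge e_2\wedge Je_2+e_1\wedge Je_1\wedge e_3\wedge Je_3+e_2\wedge Je_2\wedge e_3\wedge Je_3$ and $\tfrac{\omega^3}{6}=e_1\wedge Je_1\wedge e_2\wedge Je_2\wedge e_3\wedge Je_3=\vol_V$ (cf.~\eqref{eq:volV}), together with the elementary product rule $e_i\wedge Je_i\wedge\tfrac{\omega^2}{2}=\tfrac{\omega^3}{6}$ and the vanishing $e_i\wedge Je_i\wedge e_j\wedge Je_j\wedge\tfrac{\omega^2}{2}=0$; these are the only computational facts used repeatedly.

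Next I would separate the list into the identities that are immediate from structure and those requiring calculation. By Lemma~\ref{lem:basic}(b), every entry of $e\times e+Je\times Je$, and hence of $[e\times e+Je\times Je]$, as well as every entry of $[e]\wedge[Je]-[Je]\wedge[e]$, is a basic form of type $(1,1)$; wedging a $(1,1)$-form on the $3$-fold $V$ with the $(3,0)+(0,3)$-form $\Im\Omega$ lands in $\Lambda^{4,1}\oplus\Lambda^{1,4}=0$, which disposes of both identities in \eqref{eq:eJe.ImOmega}. Likewise, the entries of $e\times e+Je\times Je$ and the off-diagonal entries of $[e]\wedge[Je]-[Je]\wedge[e]$ are \emph{primitive} $(1,1)$-forms (Lemma~\ref{lem:basic}(b)), hence annihilated by $\wedge\,\omega^2$, which gives the left identity in \eqref{eq:eJe.omega1}; for the right identity in \eqref{eq:eJe.omega1} I combine this with the explicit diagonal part \eqref{eq:diag} and the product rule above, the summand $e_j\wedge Je_j+e_k\wedge Je_k$ contributing $2\tfrac{\omega^3}{6}$ and the prefactor $-2$ yielding the constant $-4$.

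The genuinely computational part concerns the $(2,0)+(0,2)$-vectors $e\times Je$ and $e\times e-Je\times Je$ and the outer products $e\wedge e^{\rm T}$, $Je\wedge Je^{\rm T}$, $e\wedge Je^{\rm T}$, $Je\wedge e^{\rm T}$, paired against $\Im\Omega$ and $\tfrac{\omega^2}{2}$. Here I would exploit that cyclic permutation of the index set $\{1,2,3\}$ fixes $\omega$ and $\Omega$ and acts equivariantly on all the bracket, cross and outer-product operations (via the cyclic invariance of $\epsilon_{ijk}$), so it suffices to check one diagonal and one off-diagonal entry of each matrix identity and the first component of each vector identity. Each check is then a short expansion; e.g.\ for the right half of \eqref{eq:eJe.omega2} one has $(e\wedge Je^{\rm T})_{ij}=e_i\wedge Je_j$, the off-diagonal terms wedge with $\tfrac{\omega^2}{2}$ to zero by a repeated factor and the diagonal term gives $e_i\wedge Je_i\wedge\tfrac{\omega^2}{2}=\tfrac{\omega^3}{6}$, so $e\wedge Je^{\rm T}\wedge\tfrac{\omega^2}{2}=\tfrac{\omega^3}{6}I$ (and similarly with a sign for \eqref{eq:eJe.omega3}); for \eqref{eq:exJe.ImOmega} one writes $(e\times Je)_1=e_2\wedge Je_3-e_3\wedge Je_2$, pairs it with the four monomials of $\Im\Omega$ in \eqref{eq:Im.Omega.coframe}, of which only two survive, and matches $2(e\times Je)_1\wedge\Im\Omega$ against $4\,e_1\wedge\tfrac{\omega^2}{2}$; and \eqref{eq:ee.ImOmega}, together with the left halves of \eqref{eq:eJe.omega2} and \eqref{eq:eJe.omega3}, follow the same pattern using the conventions of Definition~\ref{dfn:box.vector} and the computational rules of Lemma~\ref{lem: computational rules}.

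The main obstacle is purely bookkeeping: there is no conceptual content, but the three-fold's worth of $6$-dimensional wedge products, the sign conventions built into Definition~\ref{dfn:box.vector} and Lemma~\ref{lem: computational rules}, and the precise numerical constants (the $4$ and $-4$, and the signs distinguishing $e\wedge Je^{\rm T}$ from $Je\wedge e^{\rm T}$ or $e\wedge e^{\rm T}$ from $Je\wedge Je^{\rm T}$) must all be tracked with care. To keep this manageable I would pass to the complex coframe $\zeta_j:=e_j+iJe_j$, so that $\omega=\tfrac{i}{2}\sum_j\zeta_j\wedge\bar\zeta_j$, $\Omega=\zeta_1\wedge\zeta_2\wedge\zeta_3$ and $\Im\Omega=\tfrac{1}{2i}(\Omega-\bar\Omega)$; this makes the type-counting used in the second paragraph transparent and reduces each surviving computation to a monomial count, after which one translates back to the real coframe.
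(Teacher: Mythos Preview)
Your proposal is correct and follows essentially the same approach as the paper: direct verification in the local $\SU(3)$-coframe, using Lemma~\ref{lem:basic} for the type/primitivity arguments behind \eqref{eq:eJe.ImOmega} and \eqref{eq:eJe.omega1}, and reducing the remaining identities to a handful of monomial computations such as $\Im\Omega\wedge e_2\wedge Je_3=e_1\wedge\tfrac{\omega^2}{2}$ and $e_1\wedge Je_1\wedge\tfrac{\omega^2}{2}=\tfrac{\omega^3}{6}$. Your organisation is slightly more systematic (explicit appeal to cyclic symmetry and to the complex coframe for bookkeeping), whereas the paper simply exhibits a few representative wedge computations and leaves the rest implicit, but the underlying argument is the same.
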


\begin{proof}
We observe from \eqref{eq:Im.Omega.coframe} that
\begin{align*}
    \Im\Omega\wedge e_2\wedge Je_3
    =Je_2\wedge e_3\wedge e_1\wedge e_2\wedge Je_3
    =e_1\wedge (e_2\wedge Je_2\wedge e_3\wedge Je_3)=e_1\wedge \frac{\omega^2}{2}
    \end{align*}
    and
\begin{align*}
    \Im\Omega\wedge e_3\wedge Je_2    =Je_3\wedge e_1\wedge e_2\wedge e_3\wedge Je_2=- =e_1\wedge (e_2\wedge Je_2\wedge e_3\wedge Je_3)
    =-e_1\wedge \frac{\omega^2}{2}.
    \end{align*}   
Similarly, we may also compute
\begin{align*}
\Im\Omega\wedge e_2\wedge e_3 &=
-Je_1\wedge Je_2\wedge Je_3\wedge e_2\wedge e_3= ( e_2\wedge Je_2\wedge e_3\wedge Je_3)\wedge Je_1 
=\frac{\omega^2}{2}\wedge Je_1.
\end{align*}
and 
\begin{align*}
    \Im\Omega\wedge Je_2\wedge Je_3=Je_1\wedge e_2\wedge e_3\wedge Je_2\wedge Je_3=-( e_2\wedge Je_2\wedge e_3\wedge Je_3)\wedge Je_1  =-\frac{\omega^2}{2}\wedge Je_1.
\end{align*}
Hence, \eqref{eq:exJe.ImOmega}, \eqref{eq:ee.ImOmega} and the first equations in \eqref{eq:eJe.omega2} and \eqref{eq:eJe.omega3} hold (noting that $e_j\wedge Je_j\wedge \Im\Omega=0$).  


We also notice that 
\begin{align*}
    e_1\wedge Je_1\wedge\frac{\omega^2}{2}&=e_1\wedge Je_1\wedge e_2\wedge Je_2\wedge e_3\wedge Je_3 =\frac{\omega^3}{6},
\end{align*}
from which the remaining identities in \eqref{eq:eJe.omega2} and \eqref{eq:eJe.omega3} follow (since clearly $e_j\wedge Je_k\wedge\omega^2=0$ for $j\neq k$).

The previous calculation, together with Lemma \ref{lem:basic} and \eqref{eq:diag}, show that
\begin{align*}
([e]\wedge [Je]-[Je]\wedge [e])\wedge\frac{\omega^2}{2}=-4 (e_1\wedge Je_1\wedge e_2\wedge Je_2\wedge e_3\wedge Je_3) I=-4\frac{\omega^3}{6}I
\end{align*}
as claimed.  The rest of 
\eqref{eq:eJe.omega1} follows from Lemma \ref{lem:basic}.
\end{proof}

\begin{prop}
\label{prop:inst.delta.k}
    The curvature $R_{\theta_{\epsilon}^{\delta,k}}$ of the connection $\theta_{\epsilon}^{\delta,k}$ in \eqref{eq:theta.delta.eps.k} satisfies
\begin{equation}
    \begin{split}\label{eq:G2.inst.delta.eps.k}
      R_{\theta_{\epsilon}^{\delta,k}} \wedge\psi_{\epsilon}
    &=
    \frac{k\epsilon^2(1-\delta)\big(6+k(1+3\delta)\big)}{4}\frac{\omega^3}{6}\mathcal{I} \\
    &+\frac{k^2\epsilon^2
    }{4} e_0\wedge\frac{\omega^2}{2} \wedge \left(\begin{array}{ccc}
       0 & (1-5\delta)(1+\delta)e^{\rm T} & (1-5\delta)(1+\delta)Je^{\rm T}\\
        (5\delta -1)(1+\delta)e & 
        (\delta^2-4\delta-1)[Je] & 
        (\delta^2-4\delta-1)[e]\\
        (5\delta-1)(1+\delta)Je & 
        (\delta^2-4\delta-1)[e] & 
        -(\delta^2-4\delta-1)[Je]
    \end{array}\right).      
    \end{split}
\end{equation}
Therefore, $\theta^{\delta,k}_{\epsilon}$ is never a $\rG_2$-instanton.
\end{prop}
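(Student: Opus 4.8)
The plan is to compute $R_{\theta_\epsilon^{\delta,k}}\wedge\psi_\epsilon$ directly from the curvature formula \eqref{eq:R.delta.k} of Proposition \ref{prop:curv.k}, wedging each term against $\psi_\epsilon$ as written in the form \eqref{eq:psi.eps} (or the frame expression \eqref{eq:psi.eps.frame}), and then to read off from the resulting block matrix that it cannot vanish. First I would split $\psi_\epsilon=\tfrac12\omega^2-e_0\wedge\Im\Omega$ and handle the contributions of the three pieces of $R_{\theta_\epsilon^{\delta,k}}$ separately: (a) the $F_A$ term, (b) the $\tfrac{k\epsilon^2(1-\delta)}{2}\omega\mathcal I$ term, and (c) the $\tfrac{k^2\epsilon^2}{4}Q^\delta$ term, where $Q^\delta=(1-\delta)Q_-^\delta+(1+\delta)Q_+^\delta+\delta^2 Q_0$ as in \eqref{eq:BdeltaC2}.

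For (a), since $A$ is a $\rG_2$-instanton by Lemma \ref{lem:A} (equivalently, $F_A$ is basic and of type $(1,1)$ and primitive in the $\SU(3)$ sense), we have $F_A\wedge\tfrac12\omega^2=0$ and, because $\Im\Omega$ has no component pairing with $\mathfrak{su}(3)$-valued $(1,1)$-forms, also $F_A\wedge e_0\wedge\Im\Omega=0$; so this term contributes nothing — this is just the statement $F_A\wedge\psi_\epsilon=0$ already recorded in Lemma \ref{lem:A}. For (b), one computes $\omega\wedge\tfrac12\omega^2=\tfrac12\omega^3$ (so three times $\tfrac{\omega^3}{6}$) and $\omega\wedge e_0\wedge\Im\Omega=0$ since $\omega\wedge\Im\Omega=0$ on the Calabi--Yau base; this produces the $\tfrac{3k\epsilon^2(1-\delta)}{2}\tfrac{\omega^3}{6}\mathcal I$ piece which, combined with part of (c), gives the first line of \eqref{eq:G2.inst.delta.eps.k}. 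For (c), this is the computational heart: I would feed each of the matrix entries appearing in $Q_-^\delta$, $Q_+^\delta$, $Q_0$ (which are built from $e_0\wedge e$, $e_0\wedge Je$, $[e]$, $[Je]$, $e\times Je$, $e\times e-Je\times Je$, $e\wedge e^{\rm T}$, $Je\wedge Je^{\rm T}$, $Je\wedge e^{\rm T}$, $[e\times e+Je\times Je]$, $[e]\wedge[Je]-[Je]\wedge[e]$) through the wedge-with-$\psi_\epsilon$ identities collected in Lemma \ref{lem:ImOmega.identities}, together with the primitivity/type observations of Lemma \ref{lem:basic}. The terms from $Q_+^\delta$ and $Q_0$ wedged with $\tfrac12\omega^2$ and with $e_0\wedge\Im\Omega$ will, after using \eqref{eq:exJe.ImOmega}--\eqref{eq:eJe.omega3}, collapse into two types of output: multiples of $\tfrac{\omega^3}{6}\mathcal I$ (contributing the $k^2$ correction inside the bracket $6+k(1+3\delta)$) and multiples of $e_0\wedge\tfrac{\omega^2}{2}\wedge(\cdots)$; the $Q_-^\delta$ terms, carrying an explicit $e_0$, wedge with $\tfrac12\omega^2$ to give the $e_0\wedge\tfrac{\omega^2}{2}$-type entries and vanish against $e_0\wedge\Im\Omega$ (since $e_0\wedge e_0=0$). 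Bookkeeping the coefficients of $e^{\rm T}$, $Je^{\rm T}$, $[e]$, $[Je]$ in the off-diagonal and lower blocks yields the factors $(1-5\delta)(1+\delta)$ and $(\delta^2-4\delta-1)$ recorded in the displayed matrix.

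Finally, to conclude that $\theta_\epsilon^{\delta,k}$ is never a $\rG_2$-instanton, I would argue that the right-hand side of \eqref{eq:G2.inst.delta.eps.k} cannot vanish for any $\delta\in\R$ and $k\neq0$: the scalar prefactor $(1-\delta)(6+k(1+3\delta))$ of the $\tfrac{\omega^3}{6}\mathcal I$ term vanishes only if $\delta=1$ or $k=-\tfrac{6}{1+3\delta}$, while the $e_0\wedge\tfrac{\omega^2}{2}$-block vanishes only if both $(1-5\delta)(1+\delta)=0$ and $\delta^2-4\delta-1=0$ — but $\delta^2-4\delta-1=0$ gives $\delta=2\pm\sqrt5$, which is neither $1$, $-1$, nor $\tfrac15$, so the two conditions are incompatible. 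Hence for every admissible choice of parameters at least one block of $R_{\theta_\epsilon^{\delta,k}}\wedge\psi_\epsilon$ is nonzero. The main obstacle is purely the organisational/algebraic one in step (c): keeping the $3\times3$-block matrix entries and their $\delta$-polynomial coefficients straight while repeatedly invoking Lemma \ref{lem:ImOmega.identities}, since a single sign slip propagates into the final coefficients; there is no conceptual difficulty once the wedge identities are in hand.
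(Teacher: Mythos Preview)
Your proposal is correct and follows essentially the same route as the paper: both start from the curvature decomposition \eqref{eq:R.delta.k}, kill the $F_A$ contribution via Lemma~\ref{lem:A}, compute $\omega\mathcal I\wedge\psi_\epsilon$ directly, and then process $Q^\delta=(1-\delta)Q_-^\delta+(1+\delta)Q_+^\delta+\delta^2Q_0$ term by term against $\psi_\epsilon$ using the identities of Lemmas~\ref{lem:basic} and~\ref{lem:ImOmega.identities}. Your final incompatibility argument---that $(1-5\delta)(1+\delta)$ and $\delta^2-4\delta-1$ share no real root---is exactly the paper's observation, so the $e_0\wedge\tfrac{\omega^2}{2}$-block alone already forces $k=0$, making your discussion of the $\tfrac{\omega^3}{6}\mathcal I$ coefficient superfluous (though not wrong).
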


\begin{remark}
We see that $\theta^{\delta,k}_{\epsilon}$ can  be a $\rG_2$-instanton if and only if we are in the trivial case where $k=0$, which we have excluded.
\end{remark}
\begin{proof}
Since $A$ is a $\rG_2$-instanton by Lemma \ref{lem:A}, we deduce immediately from Proposition \ref{prop:curv.k} that
\begin{align}
    R_{\theta_{\epsilon}^{\delta,k}}\wedge \psi_{\epsilon}
    &= F_A\wedge\psi_{\epsilon} +\frac{k\epsilon^2(1-\delta)}{2}(\omega \wedge\psi_{\epsilon})\mathcal{I} +\frac{k^2\epsilon^2}{4}Q^{\delta} 
    \wedge\psi_{\epsilon}\nonumber\\
    &=\frac{k\epsilon^2(1-\delta)}{4}\omega^3\mathcal{I}+\frac{k^2\epsilon^2}{4}Q^{\delta}
    \wedge \psi_{\epsilon}. 
    \label{eq:G2.inst.delta.eps.k.1}
\end{align}

We now study the term $Q^{\delta}
\wedge\psi_{\epsilon}$.  We first note that
\begin{align*}
    e_0\wedge e\wedge\psi_{\epsilon}
    &=e_0\wedge \frac{\omega^2}{2}\wedge e,\qquad
    e_0\wedge Je\wedge\psi_{\epsilon}
    =e_0\wedge \frac{\omega^2}{2}\wedge Je.
    \end{align*}
Hence, from \eqref{eq:Qdelta-}, we find that
\begin{equation}\label{eq:Qdelta-.inst}
Q^{\delta}_-\wedge\psi_{\epsilon}=e_0 \wedge\frac{1}{2}\omega^2\wedge \left(\begin{array}{ccc} 
        0 & (1+\delta) 
        e^{\rm T} & (1+\delta) 
        Je^{\rm T}\\ -(1+\delta) 
        e & -2\delta[
        Je] & -2\delta[
        e] \\
        -(1+\delta) 
        Je & -2\delta[
        e] & 2\delta[
        Je]
    \end{array}\right).    
\end{equation}

By Lemmas \ref{lem:basic} and \ref{lem:ImOmega.identities} we find that
\begin{align*}
    2(e\times Je)\wedge\psi_{\epsilon}
    &=-2e_0\wedge \Im\Omega\wedge(e\times Je)=-4e_0\wedge\frac{\omega^2}{2}\wedge e,\\
    (e\times e-Je\times Je)\wedge\psi_{\epsilon}
    &=-e_0\wedge \Im\Omega \wedge(e\times e-Je\times Je)=-4e_0\wedge\frac{\omega^2}{2}\wedge Je.
\end{align*}
%
We also see from Lemma \ref{lem:ImOmega.identities} that
\begin{align*}
    Je\wedge Je^{\rm T}\wedge \psi_{\epsilon}
    &=-e_0 \wedge \Im\Omega \wedge Je\wedge Je^{\rm T}=e_0 \wedge \frac{\omega^2}{2}\wedge [Je],\\
    e\wedge e^{\rm T}\wedge \psi_{\epsilon}
    &=-e_0 \wedge \Im\Omega \wedge e\wedge e^{\rm T}=-e_0 \wedge\frac{\omega^2}{2}\wedge [Je],\\
    Je\wedge e^{\rm T}\wedge \psi_{\epsilon}&=-\frac{\omega^3}{6}I-e_0\wedge \Im\Omega\wedge Je\wedge e^{\rm T}=-\frac{\omega^3}{6}I-e_0\wedge\frac{\omega^2}{2}\wedge [e],\\
      e\wedge Je^{\rm T}\wedge \psi_{\epsilon}&=\frac{\omega^3}{6}I-e_0\wedge \Im\Omega\wedge e\wedge Je^{\rm T}=\frac{\omega^3}{6}I-e_0\wedge\frac{\omega^2}{2}\wedge [e].
\end{align*}
We deduce that
\begin{align*}
Q_+^{\delta}\wedge\psi_{\epsilon}=(1+\delta)\frac{\omega^3}{6}\mathcal{I}+e_0\wedge\frac{\omega^2}{2}\wedge \left(\begin{array}{ccc} 0 & -4\delta e^{\rm T} & -4\delta Je^{\rm T}\\
4\delta e & -(1+\delta)[Je] & -(1+\delta)[e] \\
4\delta Je &-(1+\delta)[e] & (1+\delta)[Je] \end{array}\right).  
\end{align*}

Finally, it follows from Lemma \ref{lem:ImOmega.identities}
  that
  \begin{align*}
    [e\times e+Je\times Je]\wedge\psi_{\epsilon}=0,\qquad ([e]\wedge [Je]-[Je]\wedge [e])\wedge\psi_{\epsilon}=-4\frac{\omega^3}{6}I.  
  \end{align*}
 Thus, 
\begin{align*}
Q_0\wedge\psi_{\epsilon}= \frac{\omega^3}{6}\left(\begin{array}{ccc} 0 & 0 & 0\\
0 & 0 & 4I\\
0 & -4I & 0
\end{array}\right)=-4\frac{\omega^3}{6}\mathcal{I}.
\end{align*}

Overall, we have 
\begin{align*}
    Q^{\delta}&\wedge\psi_{\epsilon}=\big((1-\delta)Q_-^{\delta}+(1+\delta)Q_+^{\delta}+\delta^2Q_0\big)\wedge\psi_\epsilon\\
    &=(1-\delta)e_0 \wedge\frac{1}{2}\omega^2\wedge \left(\begin{array}{ccc} 
        0 & (1+\delta) 
        e^{\rm T} & (1+\delta) 
        Je^{\rm T}\\ -(1+\delta) 
        e & -2\delta[
        Je] & -2\delta[
        e] \\
        -(1+\delta) 
        Je & -2\delta[
        e] & 2\delta[
        Je]
    \end{array}\right)\\
    &\qquad+(1+\delta)^2\frac{\omega^3}{6}\mathcal{I}+(1+\delta)e_0^{(k)}\wedge\frac{\omega^2}{2}
    \wedge 
    \left(\begin{array}{ccc} 
        0 & -4\delta e^{\rm T} & -4\delta Je^{\rm T}\\
        4\delta e & -(1+\delta)[Je] & -(1+\delta)[e] \\
        4\delta Je &-(1+\delta)[e] & (1+\delta)[Je] 
    \end{array}\right)\\
    &\qquad
    -4\delta^2\frac{\omega^3}{6}\mathcal{I}\\
    &=(1-\delta)(1+3\delta)\frac{\omega^3}{6}\mathcal{I} +e_0\wedge\frac{\omega^2}{2} \wedge 
    \left(\begin{array}{ccc} 
        0 & (1+\delta)(1-5\delta)e^{\rm T} & (1+\delta)(1-5\delta)Je^{\rm T} \\
        (1+\delta)(5\delta-1)e & (\delta^2-4\delta-1)[Je] & (\delta^2-4\delta-1)[e] \\
        (1+\delta)(5\delta-1)Je & (\delta^2-4\delta-1)[e] & -(\delta^2-4\delta-1)[Je]
    \end{array}\right).
\end{align*}
We deduce from this equation and \eqref{eq:G2.inst.delta.eps.k.1} that the coefficient of $\frac{\omega^3}{6}\mathcal{I}$ in $R_{\theta_{\epsilon}^{\delta,k}}\wedge\psi_{\epsilon}$ is
\begin{equation*}
    \frac{6k\epsilon^2(1-\delta)}{4}+\frac{k^2\epsilon^2(1-\delta)(1+3\delta)}{4}=\frac{k\epsilon^2(1-\delta)\big(6+k(1+3\delta)\big)}{4}
\end{equation*}
The claimed formula \eqref{eq:G2.inst.delta.eps.k} now follows.

Since the quadratics $(1-5\delta)(1+\delta)$ and $\delta^2-4\delta-1$ in $\delta$ have no common roots, we see that if $\theta^{\delta,k}_{\epsilon}$ were a $\rG_2$-instanton, then we must have $k=0$.  
\end{proof}

\begin{remark}
In particular, we see that neither the Bismut nor the Hull connection are $\rG_2$-instantons. 
\end{remark}

A straightforward adaptation of the arguments leading to Proposition \ref{prop:inst.delta.k}, using Proposition \ref{prop:twist.curvature}, gives the following result for $\theta^{\delta,k}_{\epsilon,m} $.  

\begin{cor}\label{cor:inst.delta.m.k}
The curvature $R^{\delta,k}\epm$ of the connection $\theta^{\delta,k}_{\epsilon,m} $ in \eqref{eq:theta.delta.m.k} satisfies
\begin{equation}
\begin{split}
\label{eq:G2.inst.eps.delta.m.k}
    & R^{\delta,k}\epm \wedge\psi_{\epsilon}\\
    &=\frac{k\epsilon^2\big(6(1-\delta+m)+k(1-\delta)(1+3\delta)\big)}{4} \frac{\omega^3}{6}\mathcal{I} \\
    &+\frac{k^2\epsilon^2
    }{4} e_0\wedge\frac{\omega^2}{2} \wedge \left(\begin{array}{ccc}
       0 & (1+m-5\delta)(1+\delta)e^{\rm T} & (1+m-5\delta)(1+\delta)Je^{\rm T}\\
        (5\delta -1-m)(1+\delta)e & 
        (\delta^2-2(2+m)\delta-1)[Je] & 
        (\delta^2-2(2+m)\delta-1)[e]\\
        (5\delta-1-m)(1+\delta)Je & 
        (\delta^2-2(2+m)\delta-1)[e] & 
        -(\delta^2-2(2+m)\delta-1)[Je]
    \end{array}\right).
\end{split}
\end{equation}

Therefore, $\theta^{\delta,k}_{\epsilon,m} $ is never a $\rG_2$-instanton. 
\end{cor}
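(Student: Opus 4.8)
The plan is to run the proof of Proposition~\ref{prop:inst.delta.k} essentially verbatim, replacing the input curvature formula from Proposition~\ref{prop:curv.k} by the twisted one from Proposition~\ref{prop:twist.curvature}. Concretely, I would start from \eqref{eq:curv.delta.m.k}, wedge with $\psi_{\epsilon}$, and use $F_A\wedge\psi_{\epsilon}=0$ (Lemma~\ref{lem:A}) together with $\omega\wedge\psi_{\epsilon}=\tfrac12\omega^3$ (since $\omega\wedge\Omega=0$) to reduce everything to the single computation of $Q^{\delta}_m\wedge\psi_{\epsilon}$. The key observation is that, by \eqref{eq:Q.delta.m}, the only difference between $Q^{\delta}_m$ and $Q^{\delta}$ is that the coefficient of $Q^{\delta}_-$ has changed from $(1-\delta)$ to $(1-\delta+m)$; hence the three building blocks $Q^{\delta}_-\wedge\psi_{\epsilon}$, $Q^{\delta}_+\wedge\psi_{\epsilon}$ and $Q_0\wedge\psi_{\epsilon}$ are exactly the ones already evaluated in the proof of Proposition~\ref{prop:inst.delta.k} via Lemmas~\ref{lem:basic} and~\ref{lem:ImOmega.identities}, so no new form-level identity is required.

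Assembling the pieces, the coefficient of $\tfrac{\omega^3}{6}\mathcal{I}$ picks up $\tfrac{6k\epsilon^2(1-\delta+m)}{4}$ from the $\omega\mathcal{I}$ term in \eqref{eq:curv.delta.m.k} and $\tfrac{k^2\epsilon^2}{4}\big((1+\delta)^2-4\delta^2\big)=\tfrac{k^2\epsilon^2}{4}(1-\delta)(1+3\delta)$ from $(1+\delta)Q^{\delta}_+\wedge\psi_{\epsilon}+\delta^2 Q_0\wedge\psi_{\epsilon}$, which reproduces the first line of \eqref{eq:G2.inst.eps.delta.m.k}. The $e_0\wedge\tfrac{\omega^2}{2}\wedge(\cdot)$ contribution is the same linear combination of the two matrices occurring in $Q^{\delta}_-\wedge\psi_{\epsilon}$ and $Q^{\delta}_+\wedge\psi_{\epsilon}$ as in Proposition~\ref{prop:inst.delta.k}, now weighted by $(1-\delta+m)$ and $(1+\delta)$ instead of $(1-\delta)$ and $(1+\delta)$; the off-diagonal coefficient becomes $(1+\delta)\big((1-\delta+m)-4\delta\big)=(1+\delta)(1+m-5\delta)$ and the near-diagonal coefficient becomes $-2\delta(1-\delta+m)-(1+\delta)^2=\delta^2-2(2+m)\delta-1$, giving the second line of \eqref{eq:G2.inst.eps.delta.m.k}.

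For the concluding assertion, suppose $R^{\delta,k}\epm\wedge\psi_{\epsilon}=0$. The $6$-forms $e_0\wedge\tfrac{\omega^2}{2}\wedge e_j$ and $e_0\wedge\tfrac{\omega^2}{2}\wedge Je_j$ all involve $e_0$, whereas $\tfrac{\omega^3}{6}$ is a nonzero basic $6$-form not involving $e_0$, so the two lines of \eqref{eq:G2.inst.eps.delta.m.k} must vanish independently. Vanishing of the $e_0\wedge\tfrac{\omega^2}{2}$-block forces $(1+m-5\delta)(1+\delta)=0$ and $\delta^2-2(2+m)\delta-1=0$; the first equation gives either $\delta=-1$ or $m=5\delta-1$, and feeding $m=5\delta-1$ into the second yields $9\delta^2+2\delta+1=0$, which has negative discriminant, so necessarily $\delta=-1$ and then $m=-2$. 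But at $\delta=-1$, $m=-2$ the coefficient of $\tfrac{\omega^3}{6}\mathcal{I}$ equals $\tfrac{k\epsilon^2}{4}\big(6(1-\delta+m)+k(1-\delta)(1+3\delta)\big)=\tfrac{k\epsilon^2}{4}(-4k)=-k^2\epsilon^2\neq 0$ because $k\neq 0$, a contradiction. Hence $\theta^{\delta,k}_{\epsilon,m}$ is never a $\rG_2$-instanton.

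Since the whole argument is a bookkeeping exercise built on already-proven identities, I do not expect a serious obstacle; the one point requiring genuine care is the last paragraph, namely checking that introducing the extra twist parameter $m$ does not open up a solution of the instanton equation. Unlike Proposition~\ref{prop:inst.delta.k}, where the two relevant quadratics in $\delta$ alone had no common root, here one faces a coupled system in $(\delta,m)$ together with the $k$-dependent coefficient of $\mathcal{I}$, and one must verify it stays inconsistent for \emph{all} admissible $\delta,m\in\R$ and $k\neq 0$ — precisely the computation sketched above.
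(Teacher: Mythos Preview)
Your proposal is correct and follows essentially the same approach as the paper: both observe that passing from $Q^{\delta}$ to $Q^{\delta}_m$ merely replaces the coefficient $(1-\delta)$ of $Q^{\delta}_-$ by $(1-\delta+m)$, so one need only add the corresponding correction (the paper phrases this via \eqref{eq:Qdelta-.inst}) to the already established formula \eqref{eq:G2.inst.delta.eps.k}. Your treatment of the final non-existence step is also equivalent to the paper's---the paper solves the three vanishing conditions to force $k=0$, while you evaluate the $\tfrac{\omega^3}{6}\mathcal{I}$-coefficient at $(\delta,m)=(-1,-2)$ and obtain $-k^2\epsilon^2\neq 0$, which is the same contradiction.
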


\begin{proof}
The key observation is \eqref{eq:Qdelta-.inst} which shows, together with Proposition \ref{prop:twist.curvature}, that we must add
\begin{equation*}
  \frac{km\epsilon^2}{4}\omega^3\mathcal{I}+  \frac{k^2\epsilon^2}{4}me_0 \wedge\frac{1}{2}\omega^2\wedge \left(\begin{array}{ccc} 
        0 & (1+\delta) 
        e^{\rm T} & (1+\delta) 
        Je^{\rm T}\\ -(1+\delta) 
        e & -2\delta[
        Je] & -2\delta[
        e] \\
        -(1+\delta) 
        Je & -2\delta[
        e] & 2\delta[
        Je]
    \end{array}\right)   
\end{equation*}
to the right-hand side of \eqref{eq:G2.inst.delta.eps.k} to obtain $  R_{\theta_{\epsilon,m}^{\delta,k}} \wedge\psi_{\epsilon}$.  The claimed formula \eqref{eq:G2.inst.eps.delta.m.k} then follows.

We deduce that, since $k\neq 0$,  $\theta^{\delta,k}_{\epsilon,m} $ is a $\rG_2$-instanton if and only if
\begin{align*}
    (1-\delta)(6+k(1+3\delta))+6m=0,\quad (5\delta -1-m)(1+\delta)=0,\quad (\delta^2-1)-2(2+m)\delta=0.
\end{align*}
One may see that the only real solutions have $\delta=-1$, meaning the second equation is satisfied for any $m$.  The third equation forces $m=-2$ and the first equation gives $12-4k+6m=0$, which then forces $k=0$.  
\end{proof}

\begin{remark}
\label{rem: approximate G2-instanton}
    Although $\theta^{\delta,k}_{\epsilon,m} $ is never a $\rG_2$-instanton, we \textcolor{black}{know} by  \eqref{eq:order.alpha} and \color{black} \eqref{eq:G2.inst.eps.delta.m.k} that it is an ``approximate'' $\rG_2$-instanton whenever
$$ \frac{k\epsilon^2\big(6(1-\delta+m)+k(1-\delta)(1+3\delta)\big)}{4},\quad \frac{k^2\epsilon^2
    }{4}(1+m-5\delta)(1+\delta), \quad \frac{k^2\epsilon^2
    }{4} (\delta^2-2(2+m)\delta-1)   $$
     are all ``sufficiently small'' in a suitable sense.  This smallness will be related to the constant $\alpha'$ which we will determine  in the next section on the anomaly-free condition \eqref{eq:anomaly}.
\end{remark}

\section{The anomaly term
}


We wish to study the heterotic Bianchi identity for the connections $\theta=\theta^{\delta,k}_{\epsilon,m} $ and $\rG_2$-structure $\varphi_{\epsilon}$.  By \eqref{eq:anomaly} and Lemma \ref{lem:flux}, this becomes
\begin{equation}
\label{eq:anomaly.eps.delta.m.k}
    dH_{\epsilon}=-\epsilon^2\omega^2=\frac{\alpha'}{4}(\tr F_A^2-\tr R_\theta^2).
\end{equation}
Proposition \ref{prop:twist.curvature} allows us to study when this condition can be satisfied, since by \eqref{eq:curv.delta.m.k}, we have that
\begin{equation}
\label{eq:anomaly.1}
\begin{split}
    R_\theta^2-F_A^2
    &=\frac{k^2\epsilon^4(1-\delta+m)^2}{4} \omega^2\mathcal{I}^2 +\frac{k\epsilon^2(1-\delta+m)}{2}(F_A\wedge\omega\mathcal{I} +\omega\mathcal{I}\wedge F_A)\\
    &\quad+\frac{k^3\epsilon^4(1-\delta+m)}{8}(\omega\mathcal{I}\wedge Q^{\delta}_m+Q^{\delta}_m\wedge\omega\mathcal{I}) +\frac{k^2\epsilon^2}{4}(F_A\wedge Q^{\delta}_m+Q^{\delta}_m\wedge F_A) +\frac{k^4\epsilon^4}{16}(Q^{\delta}_m)^2.
\end{split}
\end{equation}

\subsection{Terms involving the matrix \texorpdfstring{$\mathcal{I}$}{I}}

We begin by studying the trace of the first line on the right-hand side of \eqref{eq:anomaly.1}.  

\begin{lemma}
\label{lem:trace.1}
    For $\mathcal{I}$ as in \eqref{eq:mathcal.I} and $F_A$ as in \eqref{eq:FA} we have that
\begin{equation}
\label{eq:trace.I.1}
    \tr\mathcal{I}^2=-6 \qandq \tr(F_A\wedge\omega\mathcal{I}+\omega\mathcal{I}\wedge F_A)=0.
\end{equation}
\end{lemma}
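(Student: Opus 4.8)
The plan is to reduce both identities to elementary block–matrix computations, exploiting that $\mathcal{I}$ in \eqref{eq:mathcal.I} and $F_A$ in \eqref{eq:FA} are both assembled from $3\times 3$ blocks (with the $7\times7$ structure partitioned as $1+3+3$).

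For the first identity I would simply square $\mathcal{I}$ as a block matrix. Its only nonzero blocks are the $(2,3)$-block $-I$ and the $(3,2)$-block $I$, so $\mathcal{I}^2$ has $(2,2)$- and $(3,3)$-blocks equal to $(-I)(I)=-I$ and $(I)(-I)=-I$ respectively, and all other blocks zero. Hence $\tr\mathcal{I}^2=0+(-3)+(-3)=-6$.

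For the second identity, the key point is graded cyclicity of the matrix trace of form-valued matrices: if $P$ and $Q$ have entries which are $p$- and $q$-forms, then $\tr(P\wedge Q)=(-1)^{pq}\tr(Q\wedge P)$. Since the entries of $F_A$ and of $\omega\mathcal{I}$ are all $2$-forms, the sign is $+1$, so $\tr(F_A\wedge\omega\mathcal{I}+\omega\mathcal{I}\wedge F_A)=2\tr(F_A\wedge\omega\mathcal{I})$. Because $\omega$ is a scalar $2$-form it commutes past the (even-degree) entries of $F_A$, giving $\tr(F_A\wedge\omega\mathcal{I})=\omega\wedge\tr(F_A\mathcal{I})$. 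A one-line block multiplication using \eqref{eq:FA} and \eqref{eq:mathcal.I} shows that both diagonal blocks of $F_A\mathcal{I}$ equal $\beta$, so $\tr(F_A\mathcal{I})=2\tr\beta=0$ because $\beta$ is symmetric and traceless. Therefore $\tr(F_A\wedge\omega\mathcal{I}+\omega\mathcal{I}\wedge F_A)=0$.

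There is essentially no obstacle here; the only point requiring a moment's care is the sign in the graded cyclicity formula, which happily is trivial ($p=q=2$) so the two cross-terms \emph{add} rather than cancel. If one prefers to avoid invoking cyclicity at all, the alternative is to compute $F_A\wedge\omega\mathcal{I}$ and $\omega\mathcal{I}\wedge F_A$ directly block-by-block: each has diagonal blocks proportional to $\beta\wedge\omega$ (resp.\ $\omega\wedge\beta$), hence trace zero since $\tr\beta=0$, and the off-diagonal blocks involving $\alpha$ cancel in pairs on the diagonal anyway. Either route is a few lines and uses only the structural facts about $\alpha,\beta$ recorded after \eqref{eq:FA}.
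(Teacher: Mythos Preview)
Your proof is correct. The first identity is handled identically to the paper. For the second, your primary route via graded cyclicity is a slight streamlining over the paper, which instead carries out exactly the direct block computation you sketch as your alternative: it writes out
\[
F_A\wedge\omega\mathcal{I}+\omega\mathcal{I}\wedge F_A=\left(\begin{array}{ccc} 0 & 0 & 0\\ 0 & 2\beta\wedge\omega & -2\alpha\wedge\omega\\ 0 & 2\alpha\wedge\omega & 2\beta\wedge\omega\end{array}\right)
\]
and then invokes $\tr\beta=0$. Both arguments are a few lines and terminate at the same structural fact; your cyclicity observation simply avoids writing the off-diagonal blocks.
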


\begin{proof}
We first notice that
\begin{equation*}
    \mathcal{I}^2
    =-\left(\begin{array}{ccc} 
        0 & 0 & 0 \\
        0 & I & 0\\ 
        0 & 0 & I
    \end{array}\right)
\end{equation*}
and hence the first equation in \eqref{eq:trace.I.1} holds.

We then deduce from the formula \eqref{eq:FA} for $F_A$ 
that
\begin{equation*}
F_A\wedge\omega\mathcal{I}+\omega\mathcal{I}\wedge F_A
=\left(\begin{array}{ccc} 0 & 0 & 0\\
0 & 2\beta\wedge\omega & -2\alpha\wedge\omega\\
0 & 2\alpha\wedge\omega & 2\beta\wedge\omega\end{array}\right).
\end{equation*}
Since $\beta$ is traceless, the second equation in \eqref{eq:trace.I.1} also holds.
\end{proof}
We deduce from \eqref{eq:anomaly.eps.delta.m.k} and Lemma \ref{lem:trace.1} that
\begin{equation}
\label{eq:trace.2}
\begin{split}
    \tr(R^2_{\theta^{\delta,k}_{\epsilon,m}  }-F_A^2)
    &=-\frac{3k^2\epsilon^4(1-\delta+m)^2}{2}\omega^2 +\frac{k^3\epsilon^4(1-\delta+m)}{8}\tr(\omega\mathcal{I}\wedge Q^{\delta}_m+Q^{\delta}_m\wedge\omega\mathcal{I})\\
    &\quad+\frac{k^2\epsilon^2}{4}\tr(F_A\wedge Q^{\delta}_m+Q^{\delta}_m\wedge F_A) +\frac{k^4\epsilon^4}{16}\tr(Q^{\delta}_m)^2.
\end{split}
\end{equation}
We now wish to study the second term on the right-hand side of \eqref{eq:trace.2}.

\begin{lemma}
    For $\mathcal{I}$ in \eqref{eq:mathcal.I} and $Q^{\delta}_-$, $Q^{\delta}_+$, $Q_0$ in \eqref{eq:Qdelta-}, \eqref{eq:Qdelta+} and \eqref{eq:Q0}, we have
\begin{align}
    \tr(\omega\mathcal{I}\wedge Q^{\delta}_-+Q^{\delta}_-\wedge\omega\mathcal{I})&=0,\label{eq:tr.Q.I.-}\\
   \tr(\omega\mathcal{I}\wedge Q^{\delta}_++Q^{\delta}_+\wedge\omega\mathcal{I})&=-4(1+\delta)\omega^2,\label{eq:tr.Q.I.+} \\
    \tr(\omega\mathcal{I}\wedge Q_0+Q_0\wedge\omega\mathcal{I})&=16\omega^2.\label{eq:tr.Q.I.0}
\end{align}
Hence, for $Q^{\delta}_m$ given in \eqref{eq:Q.delta.m}, we have
\begin{equation}\label{eq:tr.Q.I.delta.m}
     \tr(\omega\mathcal{I}\wedge Q^{\delta}_m+Q^{\delta}_m\wedge\omega\mathcal{I})=4(4\delta^2-(1+\delta)^2)\omega^2.
\end{equation}
\end{lemma}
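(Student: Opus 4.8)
The plan is to reduce each trace to an elementary matrix computation, exploiting that $\omega$ is a scalar-valued $2$-form and that $\mathcal{I}$ in \eqref{eq:mathcal.I} has \emph{scalar} entries. First I would observe that, since $2$-forms commute under wedge, for any $2$-form-valued matrix $Q$ one has
\[
\omega\mathcal{I}\wedge Q+Q\wedge\omega\mathcal{I}=\big(\mathcal{I}Q+Q\mathcal{I}\big)\wedge\omega,
\]
the anticommutator being ordinary matrix multiplication; taking the trace and using $\tr(\mathcal{I}Q)=\tr(Q\mathcal{I})$ gives $\tr\big(\omega\mathcal{I}\wedge Q+Q\wedge\omega\mathcal{I}\big)=2\,\tr(\mathcal{I}Q)\wedge\omega$. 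Writing everything in the $1+3+3$ block decomposition used throughout, left-multiplication by $\mathcal{I}$ reads off from \eqref{eq:mathcal.I} as sending the block rows $(Q_{1\bullet},Q_{2\bullet},Q_{3\bullet})$ to $(0,-Q_{3\bullet},Q_{2\bullet})$, so $\tr(\mathcal{I}Q)=\tr(Q_{23})-\tr(Q_{32})$, the traces now over the $3\times3$ blocks. This reduces the whole lemma to extracting the $(2,3)$ and $(3,2)$ blocks of $Q^{\delta}_-$, $Q^{\delta}_+$ and $Q_0$ from \eqref{eq:Qdelta-}, \eqref{eq:Qdelta+}, \eqref{eq:Q0}, taking their ($3\times3$) traces, and wedging with $2\omega$.

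Carrying this out: for $Q^{\delta}_-$ the $(2,3)$ and $(3,2)$ blocks coincide (both equal $-2\delta\,e_0\wedge[e]$), so $\tr(\mathcal{I}Q^{\delta}_-)=0$, giving \eqref{eq:tr.Q.I.-}. For $Q^{\delta}_+$ the blocks are $(1+\delta)Je\wedge e^{\rm T}$ and $(1+\delta)e\wedge Je^{\rm T}$; their traces differ by $(1+\delta)\sum_i(Je_i\wedge e_i-e_i\wedge Je_i)=-2(1+\delta)\sum_i e_i\wedge Je_i=-2(1+\delta)\omega$ using \eqref{eq:omega.coframe}, and wedging with $2\omega$ yields \eqref{eq:tr.Q.I.+}. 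For $Q_0$ the blocks are $\mp\big([e]\wedge[Je]-[Je]\wedge[e]\big)$, so $\tr(\mathcal{I}Q_0)=-2\,\tr\big([e]\wedge[Je]-[Je]\wedge[e]\big)$; here I would invoke the identity \eqref{eq:box.e.Je.identity}, $[e]\wedge[Je]-[Je]\wedge[e]=e\wedge Je^{\rm T}-Je\wedge e^{\rm T}-2\omega I$, whose $3\times3$ trace is $\omega-(-\omega)-6\omega=-4\omega$, so $\tr(\mathcal{I}Q_0)=8\omega$ and wedging with $2\omega$ gives \eqref{eq:tr.Q.I.0}.

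Finally, substituting $Q^{\delta}_m=(1-\delta+m)Q^{\delta}_-+(1+\delta)Q^{\delta}_++\delta^2Q_0$ from \eqref{eq:Q.delta.m} into the bilinear expression kills the $Q^{\delta}_-$ term and leaves $(1+\delta)\big(-4(1+\delta)\omega^2\big)+\delta^2\big(16\omega^2\big)=4\big(4\delta^2-(1+\delta)^2\big)\omega^2$, which is \eqref{eq:tr.Q.I.delta.m}. I do not anticipate a genuine obstacle: the computation is routine block-matrix algebra, and the only places demanding care are the sign bookkeeping in $\tr\big(\omega\mathcal{I}\wedge Q+Q\wedge\omega\mathcal{I}\big)=2\tr(\mathcal{I}Q)\wedge\omega$ (which uses that $2$-forms commute and that $\mathcal{I}$ pulls through the trace because its entries are scalars) and the evaluation $\tr\big([e]\wedge[Je]-[Je]\wedge[e]\big)=-4\omega$, which rests on \eqref{eq:box.e.Je.identity} together with $\tr(e\wedge Je^{\rm T})=\omega$ and $\tr I=6$.
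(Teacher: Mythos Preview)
Your argument is correct and cleaner than the paper's. The paper computes each anticommutator $\omega\mathcal{I}\wedge Q+Q\wedge\omega\mathcal{I}$ block by block (splitting $Q^{\delta}_\pm$ into their constituent pieces, multiplying out the full matrices, and then reading off the trace), whereas you first observe the identity $\tr(\omega\mathcal{I}\wedge Q+Q\wedge\omega\mathcal{I})=2\,\tr(\mathcal{I}Q)\wedge\omega$ and then reduce $\tr(\mathcal{I}Q)$ to the block formula $\tr(Q_{23})-\tr(Q_{32})$. This short-circuits most of the matrix algebra: for each of $Q^{\delta}_-$, $Q^{\delta}_+$, $Q_0$ you only need the $3\times 3$ traces of two blocks, rather than the full anticommutator. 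Both routes ultimately rely on the same ingredient for the $Q_0$ case, namely $\tr\big([e]\wedge[Je]-[Je]\wedge[e]\big)=-4\omega$, which the paper extracts from \eqref{eq:diag} and you obtain from \eqref{eq:box.e.Je.identity}.

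One small slip in your closing paragraph: you write ``$\tr I=6$'', but $I$ here is the $3\times 3$ identity, so $\tr I=3$; what you use (correctly) in the body is $\tr(2\omega I)=6\omega$. This does not affect the argument.
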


\begin{proof}
We first observe that
\begin{align*}
    \omega\mathcal{I}\wedge e_0\wedge\left(\begin{array}{ccc} 0 & e^{\rm T} & Je^{\rm T}\\
    -e & 0 & 0\\
    -Je & 0 & 0\end{array}\right)+ e_0\wedge\left(\begin{array}{ccc} 0 & e^{\rm T} & Je^{\rm T}\\
    -e & 0 & 0\\
    -Je & 0 & 0\end{array}\right)\wedge\omega\mathcal{I}&=e_0\wedge\omega\wedge\left(\begin{array}{ccc} 0 & Je^{\rm T} & -e^{\rm T}\\
    Je & 0 & 0\\
    -e & 0 & 0\end{array}\right)
\end{align*}
and
\begin{align*}
    \omega\mathcal{I}\wedge e_0\wedge\left(\begin{array}{ccc} 0 & 0 & 0\\
  0 & [Je] & [e]\\
   0 & [e] & -[Je]\end{array}\right)+ e_0\wedge\left(\begin{array}{ccc} 0 & 0 & 0\\
  0 & [Je] & [e]\\
   0 & [e] & -[Je]\end{array}\right)\wedge\omega\mathcal{I}&
    =0.
\end{align*}
Given the formula \eqref{eq:Qdelta-} for $Q^{\delta}_-$ we deduce \eqref{eq:tr.Q.I.-}.

Similarly, we observe that
\begin{align*}
    \tr\bigg(\omega\mathcal{I}&\wedge e_0\wedge \left(\begin{array}{ccc} 0 & 2(e\times Je)^{\rm T} & (e\times e-Je\times Je)^{\rm T}\\
    -2(e\times Je) & 0 & 0 \\
    -(e\times e-Je\times Je) & 0 & 0\end{array}\right)\\
    &+e_0\wedge \left(\begin{array}{ccc} 0 & 2(e\times Je)^{\rm T} & (e\times e-Je\times Je)^{\rm T}\\
    -2(e\times Je) & 0 & 0 \\
    -(e\times e-Je\times Je) & 0 & 0\end{array}\right)\wedge\omega\mathcal{I}\bigg)=0.
\end{align*}
However,
\begin{align*}
\omega\mathcal{I}\wedge\left(\begin{array}{ccc} 0 & 0 & 0\\
0 & -Je\wedge Je^{\rm T} & Je\wedge e^{\rm T}\\
0 & e\wedge Je^{\rm T} & -e\wedge e^{\rm T}\end{array}\right)
&+\left(\begin{array}{ccc} 0 & 0 & 0\\
0 & -Je\wedge Je^{\rm T} & Je\wedge e^{\rm T}\\
0 & e\wedge Je^{\rm T} & -e\wedge e^{\rm T}\end{array}\right)\wedge\omega\mathcal{I}\\
&=\omega\wedge\left(\begin{array}{ccc} 0 & 0 &0 \\
0 & Je\wedge e^{\rm T}-e\wedge Je^{\rm T} & e\wedge e^{\rm T}+Je\wedge Je^{\rm T}\\
0 & -e\wedge e^{\rm T}-Je\wedge Je^{\rm T} & Je\wedge e^{\rm T}-e\wedge Je^{\rm T}\end{array}\right).
\end{align*}
Taking the trace of this equation yields
\begin{equation*}
    2\omega\wedge (-2e_1\wedge Je_1-2e_2\wedge Je_2-2e_3\wedge Je_3)=-4\omega^2.
\end{equation*}
The equation \eqref{eq:Qdelta+} for $Q^{\delta}_+$ then gives \eqref{eq:tr.Q.I.+}. 

Finally, we calculate
\begin{align*}
\frac{1}{2}\tr\Bigg(\omega\mathcal{I}&\wedge \left(\begin{array}{ccc}
        0 & 0 & 0 \\ 
        0 & -[e\times e+Je\times Je] & -2([e]\wedge [Je]-[Je]\wedge [e])\\
        0 & 2([e]\wedge [Je] -[Je]\wedge [e]) & -[e\times e+Je\times Je]
    \end{array}\right)
    \Bigg)\\
    &+\frac{1}{2}\tr\left(\left(\begin{array}{ccc}
        0 & 0 & 0 \\ 
        0 & -[e\times e+Je\times Je] & -2([e]\wedge [Je]-[Je]\wedge [e])\\
        0 & 2([e]\wedge [Je] -[Je]\wedge [e]) & -[e\times e+Je\times Je]
    \end{array}\right)\wedge\omega\mathcal{I}\right)\\
    &=
    \tr\left(
    \begin{array}{ccc} 0 & 0 & 0\\
    0 & -2\omega\wedge ([e]\wedge [Je]-[Je]\wedge [e]) & \omega\wedge [e\times e+Je\times Je] \\
    0 & -\omega\wedge [e\times e+Je\times Je] & -2\omega\wedge([e]\wedge [Je]-[Je]\wedge[e]) \end{array}
    \right)\\\
    &=-4\omega\wedge\tr ([e]\wedge [Je]-[Je]\wedge [e])=-4\omega\wedge (-4\omega)=16\omega^2
\end{align*}
by \eqref{eq:diag}.  Hence, \eqref{eq:tr.Q.I.0} holds, and equation \eqref{eq:tr.Q.I.delta.m} then immediately follows from \eqref{eq:Q.delta.m} and \eqref{eq:tr.Q.I.-}--\eqref{eq:tr.Q.I.0}.
\end{proof}

Inserting \eqref{eq:tr.Q.I.delta.m} in \eqref{eq:trace.2}, we obtain:
\begin{equation}\label{eq:trace.3}
\begin{split}
    \tr(R^2_{\theta^{\delta,k}_{\epsilon,m}  }-F_A^2)&=\frac{k^2\epsilon^4(1-\delta+m)\big(k(4\delta^2-(1+\delta)^2)-3\big)}{2}\omega^2\\
&\quad+\frac{k^2\epsilon^2}{4}\tr(F_A\wedge Q^{\delta}_m+Q^{\delta}_m\wedge F_A)+\frac{k^4\epsilon^4}{16}\tr(Q^{\delta}_m)^2.
\end{split}
\end{equation}

\subsection{Linear contribution from the \texorpdfstring{$\rG_2$}{G2} field strength}

In this subsection, we wish to analyse the term $\tr(F_A\wedge Q^{\delta}_m+Q^{\delta}_m\wedge F_A)$ from \eqref{eq:trace.3}.  

\begin{lemma}\label{lem:Qdelta.pm.FA}
For $Q^\delta_-$ in \eqref{eq:Qdelta-} and $Q^{\delta}_+$ in \eqref{eq:Qdelta+} we have
\begin{equation*}
    \tr (F_A\wedge Q^{\delta}_-+Q^{\delta}_-\wedge F_A)=0\qandq \tr (F_A\wedge Q^{\delta}_++Q^{\delta}_+\wedge F_A)=0
\end{equation*}
\end{lemma}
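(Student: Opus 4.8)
The plan is to reduce both identities to a trace over the lower-right $6\times 6$ block, and then exploit the block shape of $F_A$ in \eqref{eq:FA} together with the $\rG_2$-instanton identities \eqref{eq: F ^ e and Je}. First note that $F_A$, $Q^{\delta}_-$ and $Q^{\delta}_+$ are all matrices of $2$-forms, and $2$-forms commute under $\wedge$; hence $\tr(F_A\wedge Q+Q\wedge F_A)=2\,\tr(F_A\wedge Q)$ for $Q=Q^{\delta}_{\pm}$, so it suffices to prove $\tr(F_A\wedge Q^{\delta}_{\pm})=0$. Since $F_A$ vanishes outside its lower-right $6\times 6$ block, only the corresponding block of $Q^{\delta}_{\pm}$ enters the trace; I write this block in $2\times 2$ form with $3\times 3$ blocks of forms, so that $F_A$ contributes $\mathcal F:=\left(\begin{smallmatrix}\alpha & \beta\\ -\beta & \alpha\end{smallmatrix}\right)$.

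For $Q^{\delta}_-$: by \eqref{eq:Qdelta-} the relevant block is $-2\delta\,e_0\wedge\mathcal N$ with $\mathcal N:=\left(\begin{smallmatrix}[Je] & [e]\\ [e] & -[Je]\end{smallmatrix}\right)$, so, pulling out the $1$-form $e_0$ (which commutes with the $2$-form entries of $\mathcal F$) and expanding the block product,
\[
\tr(F_A\wedge Q^{\delta}_-)=-2\delta\,e_0\wedge\big(\tr(\alpha\wedge[Je])+\tr(\beta\wedge[e])+\tr(-\beta\wedge[e])+\tr(\alpha\wedge(-[Je]))\big)=0.
\]
This uses only the algebraic shapes of $\mathcal F$ and $\mathcal N$, not the $\rG_2$-instanton equations.

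For $Q^{\delta}_+$: by \eqref{eq:Qdelta+} the relevant block is $(1+\delta)\left(\begin{smallmatrix}-Je\wedge Je^{\rm T} & Je\wedge e^{\rm T}\\ e\wedge Je^{\rm T} & -e\wedge e^{\rm T}\end{smallmatrix}\right)$, and expanding the block product gives
\[
\tr(F_A\wedge Q^{\delta}_+)=(1+\delta)\big(-\tr(\alpha\wedge Je\wedge Je^{\rm T})+\tr(\beta\wedge e\wedge Je^{\rm T})-\tr(\beta\wedge Je\wedge e^{\rm T})-\tr(\alpha\wedge e\wedge e^{\rm T})\big).
\]
Writing each trace in the shape $\sum_i(\alpha\wedge e)_i\wedge e_i$ (and analogues) and substituting the instanton identities $\alpha\wedge e=-\beta\wedge Je$ and $\alpha\wedge Je=\beta\wedge e$ from \eqref{eq: F ^ e and Je}, one finds $\tr(\alpha\wedge e\wedge e^{\rm T})=-\tr(\beta\wedge Je\wedge e^{\rm T})$ and $\tr(\alpha\wedge Je\wedge Je^{\rm T})=\tr(\beta\wedge e\wedge Je^{\rm T})$, so the four terms cancel in pairs and the expression vanishes. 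The one delicate point is the sign bookkeeping here: one must carefully track the anticommutation of the $1$-forms $e_i,Je_j$ when rearranging sums like $\sum_{i,j}\alpha_{ij}\wedge e_j\wedge e_i$, bearing in mind that the coefficient $2$-forms $\alpha_{ij},\beta_{ij}$ commute with everything. Everything else is routine block-matrix algebra.
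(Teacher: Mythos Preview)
Your argument is correct and follows essentially the same approach as the paper: reduce to the lower-right $6\times 6$ block via the shape \eqref{eq:FA} of $F_A$, observe that the $Q^{\delta}_-$ contribution cancels by pure block symmetry, and kill the $Q^{\delta}_+$ contribution using the instanton identities \eqref{eq: F ^ e and Je}. Your two preliminary observations---that $\tr(F_A\wedge Q+Q\wedge F_A)=2\tr(F_A\wedge Q)$ for matrices of $2$-forms, and that the first row/column of $Q^{\delta}_{\pm}$ can be discarded outright---streamline the presentation compared to the paper, which instead splits $Q^{\delta}_{\pm}$ into two pieces and handles each term of the anticommutator separately, but the substance is identical.
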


\begin{proof}
We see, from \eqref{eq: F ^ e and Je},  that
\begin{align*}
F_A\wedge e_0\wedge\left(\begin{array}{ccc} 0 & e^{\rm T} & Je^{\rm T}\\
    -e & 0 & 0\\
    -Je & 0 & 0\end{array}\right)&+e_0\wedge\left(\begin{array}{ccc} 0 & e^{\rm T} & Je^{\rm T}\\
    -e & 0 & 0\\
    -Je & 0 & 0\end{array}\right)\wedge F_A\\
    &=e_0\wedge \left(\begin{array}{ccc} 0 & e^{\rm T}\wedge \alpha-Je^{\rm T}\wedge\beta & e^{\rm T}\wedge\beta+Je^{\rm T}\wedge \alpha \\
    -\alpha\wedge e-\beta\wedge Je & 0  & 0 \\
    \beta\wedge e-\alpha\wedge Je & 0 & 0 \end{array}\right)=0.    
\end{align*}
We may also compute
\begin{align*}
&\tr(F_A\wedge  e_0\wedge\left(\begin{array}{ccc} 0 & 0 & 0\\
  0 & [Je] & [e]\\
   0 & [e] & -[Je]\end{array}\right)+ e_0\wedge\left(\begin{array}{ccc} 0 & 0 & 0\\
  0 & [Je] & [e]\\
   0 & [e] & -[Je]\end{array}\right)\wedge F_A)\\
   &=e_0\wedge \tr\left(\begin{array}{ccc} 0 & 0 & 0 \\
   0 & \alpha\wedge [Je]+\beta\wedge [e]+[Je]\wedge\alpha-[e]\wedge\beta & \alpha\wedge [e]-\beta\wedge[Je]+[Je]\wedge\beta+[e]\wedge\alpha \\
   0 & -\beta\wedge[Je]+\alpha\wedge [e]+[e]\wedge\alpha+[Je]\wedge\beta & -\beta\wedge[e]-\alpha\wedge[Je] +[e]\wedge\beta-[Je]\wedge\alpha \end{array}\right)=0.
\end{align*}
The first result now follows from \eqref{eq:Qdelta-}.

For the second equation, we clearly have 
\begin{align*}
    &\tr(F_A\wedge
    \left(\begin{array}{ccc} 
        0 & 2(e\times Je)^{\rm T} & (e\times e-Je\times Je)^{\rm T} \\
        -2(e\times Je) & 0 &0\\
        -(e\times e-Je\times Je) & 0 & 0
    \end{array}\right)\\
    &\qquad+
    \left(\begin{array}{ccc} 
        0 & 2(e\times Je)^{\rm T} & (e\times e-Je\times Je)^{\rm T} \\
        -2(e\times Je) & 0 &0\\
        -(e\times e-Je\times Je) & 0 & 0
    \end{array}\right)
    \wedge F_A)=0
\end{align*}
since the matrix the trace of which we are taking has no entries along the diagonal.  On the other hand, if we consider
\begin{align*}
&F_A\wedge\left(\begin{array}{ccc} 0 & 0 & 0\\
0 & -Je\wedge Je^{\rm T} & Je\wedge e^{\rm T}\\
0 & e\wedge Je^{\rm T} & -e\wedge e^{\rm T}\end{array}\right)
+\left(\begin{array}{ccc} 0 & 0 & 0\\
0 & -Je\wedge Je^{\rm T} & Je\wedge e^{\rm T}\\
0 & e\wedge Je^{\rm T} & -e\wedge e^{\rm T}\end{array}\right)\wedge F_A,
\end{align*}
we find that the only entries which are not trivially zero are
\begin{gather*}
    (-\alpha\wedge Je+\beta\wedge e)\wedge Je^{\rm T}-Je\wedge (Je^{\rm T}\wedge \alpha+e^{\rm T}\wedge\beta),\\
  (\alpha\wedge Je-\beta\wedge e)\wedge e^{\rm T} -Je\wedge(Je^{\rm T}\wedge\beta-e^{\rm T}\wedge\alpha),\\
  (\beta\wedge Je+\alpha\wedge e)\wedge Je^{\rm T}+e\wedge (Je^{\rm T}\wedge\alpha+e^{\rm T}\wedge\beta),\\
  -(\beta\wedge Je-\alpha\wedge e)\wedge e^{\rm T}+e\wedge (Je^{\rm T}\wedge\beta-e^{\rm T}\wedge\alpha),
\end{gather*}
yet these also vanish, by \eqref{eq: F ^ e and Je}.  Using \eqref{eq:Qdelta+} completes the result.
\end{proof}

From Lemma \ref{lem:Qdelta.pm.FA} we deduce that
$$\tr (F_A\wedge Q^{\delta}_m+Q^{\delta}_m\wedge F_A)=\delta^2\tr(F_A\wedge Q_0+Q_0\wedge F_A).$$
We conclude this section by studying this final term.

\begin{lemma}
\label{lem:tr.FA.Q0}
    For $Q_0$ in \eqref{eq:Q0}, we have 
    $$
    \tr (F_A\wedge Q_0+Q_0\wedge F_A)=0.
    $$
\end{lemma}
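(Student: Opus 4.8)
The plan is to reduce the statement to the vanishing of traces of $3\times 3$ blocks, exactly as in the proofs of Lemmas~\ref{lem:trace.1} and~\ref{lem:Qdelta.pm.FA}, and then to use the holonomy relations \eqref{eq: F ^ e and Je} and \eqref{eq:F.wedge.e.Je.2} for $F_A$ together with the block structure \eqref{eq:Q0} of $Q_0$. First I would write out $F_A\wedge Q_0+Q_0\wedge F_A$ in block form using \eqref{eq:FA} and \eqref{eq:Q0}. Since $F_A$ has a zero first row and first column, only the lower-right $2\times 2$ (in $3\times 3$-block terms) portion of $Q_0$ contributes, and the trace we need is
\[
\tr\Big(\alpha\wedge\sigma+\sigma\wedge\alpha\Big)+\tr\Big(\text{cross terms in }\beta\text{ and the }([e]\wedge[Je]-[Je]\wedge[e])\text{ blocks}\Big),
\]
where $\sigma:=-\tfrac12[e\times e+Je\times Je]$ denotes the symmetric diagonal-type block appearing in $Q_0$.

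The key steps, in order: (1) expand the block product, collecting the contributions of the two diagonal blocks $-\tfrac12[e\times e+Je\times Je]$ and of the two off-diagonal blocks $\mp([e]\wedge[Je]-[Je]\wedge[e])$ against the blocks $\alpha$ and $\beta$ of $F_A$; (2) observe that the diagonal-block contributions to the trace are of the form $\tr(\alpha\wedge\sigma+\sigma\wedge\alpha)$ and $\tr(\beta\wedge\rho-\rho\wedge\beta)$-type expressions with $\rho=[e]\wedge[Je]-[Je]\wedge[e]$, and the off-diagonal-block contributions assemble into $\tr$ of commutator-like combinations of $\beta$ (resp. $\alpha$) with $\rho$; (3) invoke the two identities \eqref{eq:F.wedge.e.Je.2}, namely $\alpha\wedge[e]-[e]\wedge\alpha-\beta\wedge[Je]-[Je]\wedge\beta=0$ and $\alpha\wedge[Je]+[Je]\wedge\alpha+\beta\wedge[e]-[e]\wedge\beta=0$, to rewrite the surviving block combinations; and (4) use that $\tr$ of a wedge of matrix-valued forms is (graded-)symmetric together with the fact that $\alpha$ is skew-symmetric and $\beta$ is symmetric traceless, which forces the remaining traces to cancel in pairs or vanish termwise. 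A useful shortcut is that $\tr(F_A\wedge Q_0 + Q_0\wedge F_A) = \tr(Q_0\wedge F_A + F_A\wedge Q_0)$ is a $5$-form on $K$ that is manifestly basic (pulled back from $V$), so it suffices to check its vanishing on $V$, where $F_A$ takes values in $\su(3)$ and $Q_0$ is built from the tautological $1$-forms; at that point the statement becomes a Lie-algebra identity: the pairing $\tr(F\wedge(\text{quadratic expression in the soldering form}))$ is a $\su(3)$-equivariant contraction that lands in a trivial summand which happens to be zero.

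The main obstacle I anticipate is purely organisational: keeping track of the many wedge-sign conventions when commuting matrix-valued $2$-forms ($\alpha$, $\beta$) past matrix-valued $2$-forms (the entries of $Q_0$, which are $2$-forms as well, so they commute up to a sign $+1$ under $\wedge$ on the form part but not on the matrix part), and correctly applying the ``box'' calculus of Definition~\ref{dfn:box.vector} and Lemma~\ref{lem: computational rules}. Concretely, one must be careful that $[e\times e+Je\times Je]$ is a \emph{symmetric} matrix of basic $(1,1)$-forms (Lemma~\ref{lem:basic}(b)) and that the off-diagonal part of $[e]\wedge[Je]-[Je]\wedge[e]$ is symmetric while its diagonal part is the explicit matrix \eqref{eq:diag}; combined with $\alpha$ skew and $\beta$ symmetric, each candidate surviving trace is a trace of a product of one symmetric and one skew matrix (of forms), hence antisymmetric under transpose, hence zero — but making this rigorous requires tracking that the form-degree reshuffling introduces no extra sign that would spoil the symmetry argument. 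Once the bookkeeping is set up, every term falls into one of these two buckets (killed by \eqref{eq: F ^ e and Je}/\eqref{eq:F.wedge.e.Je.2}, or killed by the symmetric-times-skew trace identity), and the lemma follows.
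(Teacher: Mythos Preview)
Your block reduction in step~(1) is right and is exactly how the paper begins: the trace collapses (up to a constant) to
\[
\tr\big(-\alpha\wedge[e\times e+Je\times Je]\big)\;+\;2\,\tr\big(\beta\wedge([e]\wedge[Je]-[Je]\wedge[e])\big).
\]
The gap is in step~(4). The matrix $[e\times e+Je\times Je]$ is \emph{skew}-symmetric, not symmetric: by Definition~\ref{dfn:box.vector} one has $[a]^{\rm T}=-[a]$ for any $3$-vector $a$, and Lemma~\ref{lem:basic}(b) only concerns the Hodge type of its \emph{entries}, not the symmetry of the matrix. Similarly, a direct transpose check (remembering the sign when transposing a wedge of matrices of $1$-forms) shows $\rho:=[e]\wedge[Je]-[Je]\wedge[e]$ is \emph{symmetric}. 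So the two terms above are, respectively, (skew)$\times$(skew) and (symmetric)$\times$(symmetric), and neither is killed by a ``symmetric-times-skew trace vanishes'' argument. That bucket is empty, and step~(3) via \eqref{eq:F.wedge.e.Je.2} would at best be a detour.

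The paper's route after the block reduction avoids this entirely. Using Lemma~\ref{lem: computational rules} it rewrites
\[
[e\times e+Je\times Je]=2\big(e\wedge e^{\rm T}+Je\wedge Je^{\rm T}\big),
\qquad
[e]\wedge[Je]-[Je]\wedge[e]=e\wedge Je^{\rm T}-Je\wedge e^{\rm T}-2\omega I,
\]
so that the trace regroups as
\[
\tr\!\big(-(\alpha\wedge e+\beta\wedge Je)\wedge e^{\rm T}-(\alpha\wedge Je-\beta\wedge e)\wedge Je^{\rm T}\big)\;-\;2\,\omega\wedge\tr\beta,
\]
up to an overall positive constant. This vanishes immediately by the first-order Bianchi identities \eqref{eq: F ^ e and Je} and $\tr\beta=0$. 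So the decisive identities are \eqref{eq: F ^ e and Je} (not \eqref{eq:F.wedge.e.Je.2}) together with the tracelessness of $\beta$; the key organising step you are missing is the rewriting via Lemma~\ref{lem: computational rules}, which replaces the box-matrices by rank-one expressions $e\wedge e^{\rm T}$, $Je\wedge Je^{\rm T}$, $e\wedge Je^{\rm T}$, $Je\wedge e^{\rm T}$ on which \eqref{eq: F ^ e and Je} acts directly.
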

\begin{proof}
We first see that
\begin{align*}
    \tr (F_A\wedge Q_0)&=\tr(F_A\wedge  \left(\begin{array}{ccc} 
        0 & 0 & 0 \\ 
        0 & -[e\times e+Je\times Je] & -2([e]\wedge [Je]-[Je]\wedge [e])\\
        0 & 2([e]\wedge [Je] -[Je]\wedge [e]) & -[e\times e+Je\times Je]
    \end{array}\right))\\
    &=2\tr(-\alpha\wedge [e\times e+Je\times Je]+2\beta\wedge ([e]\wedge [Je]-[Je]\wedge [e]),
\end{align*}
and
\begin{align*}
    \tr (Q_0\wedge F_A)&=\tr(\left(\begin{array}{ccc} 
        0 & 0 & 0 \\ 
        0 & -[e\times e+Je\times Je] & -2([e]\wedge [Je]-[Je]\wedge [e])\\
        0 & 2([e]\wedge [Je] -[Je]\wedge [e]) & -[e\times e+Je\times Je]
    \end{array}\right)\wedge F_A)\\
    &=2\tr(-[e\times e+Je\times Je]\wedge \alpha +2([e]\wedge [Je]-[Je]\wedge [e])\wedge\beta).
\end{align*}
Hence,
$$\tr (F_A\wedge Q_0+Q_0\wedge F_A)=4\tr(-\alpha\wedge [e\times e+Je\times Je]+2\beta([e]\wedge [Je]-[Je]\wedge [e]).$$
Using Lemma \ref{lem: computational rules} we find that
\begin{gather*}
    [e\times e+Je\times Je]=2e\wedge e^{\rm T}+2Je\wedge Je^{\rm T},\\
    [e]\wedge [Je]-[Je]\wedge [e]=e\wedge Je^{\rm T}-Je\wedge e^{\rm T}-2\omega I.
\end{gather*}
Therefore,
$$
\tr (F_A\wedge Q_0+Q_0\wedge F_A)=8\tr\big(-(\alpha\wedge e+\beta\wedge Je)\wedge e^{\rm T}-(\alpha\wedge Je-\beta\wedge e)\wedge Je^{\rm T}\big)-16\omega\wedge\tr\beta=0
$$
by \eqref{eq: F ^ e and Je} and the fact that $\beta$ is traceless.
\end{proof}

By \eqref{eq:trace.3} and Lemmas \ref{lem:Qdelta.pm.FA} and \ref{lem:tr.FA.Q0} we obtain, for $\theta=\theta^{\delta,k}\epm$,
\begin{equation}
\label{eq:trace.4}
\begin{split}
    \tr(R_\theta^2-F_A^2) =\frac{k^2\epsilon^4(1-\delta+m)\big(k(4\delta^2-(1+\delta)^2)-3\big)}{2}\omega^2 +\frac{k^4\epsilon^4}{16}\tr(Q^{\delta}_m)^2.
\end{split}
\end{equation}

\subsection{The nonlinear contribution $\tr(Q^{\delta}_m)^2$}

We now wish to compute the term $\tr(Q^{\delta}_m)^2$ in \eqref{eq:trace.4}, to complete our analysis of the difference in the traces of the squares of the curvatures of $\theta^{\delta,k}_{\epsilon,m} $ and $A$.
We begin with the ``square terms'' in $(Q^\delta_m)^2$.

\begin{lemma}\label{lem:nonlinear.1}
For $Q^{\delta}_-$, $Q^{\delta}_+$, $Q_0$ in \eqref{eq:Qdelta-}--\eqref{eq:Q0} we have
\begin{gather*}
    \tr(Q^{\delta}_-)^2=0,\quad \tr(Q^{\delta}_+)^2=-8\delta^2\omega^2,\quad \tr(Q_0)^2=0. 
\end{gather*}
\end{lemma}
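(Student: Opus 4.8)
The plan is to compute each of the three traces directly from the explicit matrix formulas \eqref{eq:Qdelta-}, \eqref{eq:Qdelta+}, \eqref{eq:Q0}, using the algebraic rules for the bracket notation $[\cdot]$ and the cross products $e\times Je$, $e\times e$, etc., collected in the matrix-operations appendix (Lemma \ref{lem: computational rules}). The key point throughout is that $\tr(M\wedge M)$ for a block anti-symmetric matrix of forms picks out only the contributions that survive in degree $4$ on a $7$-manifold and are compatible with the $(1,1)$/$(2,0)+(0,2)$-type decompositions from Lemma \ref{lem:basic}.

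First I would handle $\tr(Q^{\delta}_-)^2$. From \eqref{eq:Qdelta-}, $Q^{\delta}_-$ is $e_0\wedge(\text{matrix of basic }1\text{- and }2\text{-forms})$. Wedging $Q^{\delta}_-$ with itself produces $e_0\wedge e_0\wedge(\cdots)=0$ in every entry, since $e_0\wedge e_0=0$; hence $\tr(Q^{\delta}_-)^2=0$ trivially. Next, $\tr(Q_0)^2$: from \eqref{eq:Q0}, $Q_0$ has nonzero entries built from $[e\times e+Je\times Je]$ and $[e]\wedge[Je]-[Je]\wedge[e]$, which by Lemma \ref{lem:basic} are basic forms (type $(1,1)$ or $(2,0)+(0,2)$) of degree $2$; so $Q_0\wedge Q_0$ has basic entries of degree $4$. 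I would compute the diagonal blocks of $Q_0\wedge Q_0$ using the identities $[e\times e+Je\times Je]=2e\wedge e^{\rm T}+2Je\wedge Je^{\rm T}$ and $[e]\wedge[Je]-[Je]\wedge[e]=e\wedge Je^{\rm T}-Je\wedge e^{\rm T}-2\omega I$ from Lemma \ref{lem: computational rules} (already invoked in the proof of Lemma \ref{lem:tr.FA.Q0}), and check that the trace of the two diagonal blocks cancels — this mirrors the structure already seen in Lemma \ref{lem:tr.FA.Q0}, and I expect the sign pattern inherited from $\mathcal{I}$-type conjugation to force the cancellation.

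The substantive computation is $\tr(Q^{\delta}_+)^2$. From \eqref{eq:Qdelta+}, the $(1,1)$-block entries $-(1+\delta)(Je\wedge Je^{\rm T})$, $(1+\delta)(Je\wedge e^{\rm T})$, $(1+\delta)(e\wedge Je^{\rm T})$, $-(1+\delta)(e\wedge e^{\rm T})$ contribute products of $(1,1)$-type basic $2$-forms; the off-diagonal $(2,0)+(0,2)$ entries $2\delta(e\times Je)^{\rm T}$ and $\delta(e\times e-Je\times Je)^{\rm T}$ contribute the $e_0$-free part but, once squared, land in the diagonal blocks as products of $(2,0)+(0,2)$ forms. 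I would expand $Q^{\delta}_+\wedge Q^{\delta}_+$ block by block, take the trace, and use the wedge identities for $e\wedge e^{\rm T}$, $Je\wedge Je^{\rm T}$, $e\wedge Je^{\rm T}$ and the cross products from Lemma \ref{lem: computational rules} together with $\omega=\sum e_i\wedge Je_i$ to reduce everything to multiples of $\omega^2$. The $(1+\delta)^2$-terms should organise into a multiple of $\omega^2$ and the $\delta^2$-terms (from the $(2,0)+(0,2)$ off-diagonal entries) into another; the claim is that after summing the diagonal blocks one is left with exactly $-8\delta^2\omega^2$, meaning the $(1+\delta)^2$-contributions cancel against each other across the two diagonal blocks (again by the sign flip between the $e\wedge e^{\rm T}$ and $Je\wedge Je^{\rm T}$ blocks), while the cross-type contributions survive. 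The main obstacle is precisely this bookkeeping: correctly tracking which products of $3$-component cross-vectors and $3\times 3$ matrices of $2$-forms contribute to the trace, and getting the combinatorial factors (the $2\delta$ versus $\delta$ normalisations in \eqref{eq:Qdelta+}, and factors of $2$ from $M\wedge M^{\rm T}$ versus $M^{\rm T}\wedge M$) right so that the $(1+\delta)^2$-part genuinely vanishes and the coefficient of $\omega^2$ comes out as $-8\delta^2$. I would double-check the final coefficient by specialising to $\delta=1$ (the Bismut case), where \eqref{eq:BdeltaC2.+} already gives the relevant matrix explicitly, and to $\delta=0$, where $Q^{0}_+$ must give trace zero, consistent with the formula.
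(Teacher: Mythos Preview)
Your plan matches the paper's proof almost exactly: the paper also disposes of $\tr(Q^{\delta}_-)^2$ via $e_0\wedge e_0=0$, splits $Q^{\delta}_+$ into its $\delta$- and $(1+\delta)$-pieces (the cross-terms being traceless for trivial block-shape reasons), and computes $\tr(Q_0)^2$ directly from the two building blocks $P=[e\times e+Je\times Je]$ and $R=[e]\wedge[Je]-[Je]\wedge[e]$.

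The only place your heuristic diverges from what actually happens is the \emph{mechanism} of the two vanishings. The $(1+\delta)^2$-part of $\tr(Q^{\delta}_+)^2$ dies not by a sign flip between diagonal blocks but for the more elementary reason that every surviving term contains a factor $e^{\rm T}\wedge e=0$ or $Je^{\rm T}\wedge Je=0$ (the diagonal contributions such as $\tr(Je\wedge Je^{\rm T}\wedge Je\wedge Je^{\rm T})$ vanish for the same reason). Likewise for $Q_0$: the two diagonal blocks contribute \emph{equally}, not oppositely, and the cancellation is instead between $\tfrac{1}{2}\tr P^2=4\omega^2$ and $2\tr R^2=4\omega^2$. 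These are bookkeeping corrections rather than gaps --- once you carry out the block expansion you describe, you will land on exactly the paper's computation.
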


\begin{proof}
Since $Q^{\delta}_-=e_0\wedge Q$ for some matrix of 1-forms, we see immediately that $(Q^{\delta}_-)^2=0$.

For $Q^{\delta}_+$, we note that
\begin{equation}\label{eq:Qdelta+.split}
Q^{\delta}_+=\delta\left(\begin{array}{ccc}
0     & 2(e\times Je)^{\rm T} & (e\times e-Je\times Je)^{\rm T} \\
-2(e\times Je)     & 0 & 0\\
-(e\times e-Je\times Je) & 0 & 0
\end{array}\right)+(1+\delta)\left(\begin{array}{ccc}
0     &0 & 0  \\
0     & -Je\wedge Je^{\rm T} & Je\wedge e^{\rm T}\\
0 & e\wedge Je^{\rm T} & -e\wedge e^{\rm T}
\end{array}\right).
\end{equation}
We see that, in $(Q^{\delta}_+)^2$, the cross-terms coming from the pair of matrices above will be obviously traceless, so it suffices to compute the trace of each square.
We see that
\begin{align*}
&\tr\left(\begin{array}{ccc}
0     &  2(e\times Je)^{\rm T} & (e\times e-Je\times Je)^{\rm T} \\
-2(e\times Je)     & 0 & 0\\
-(e\times e-Je\times Je) & 0 & 0
\end{array}\right)^2\\
&\qquad=-4(e\times Je)^{\rm T}\wedge (e\times Je)-(e\times e-Je\times Je)^{\rm T}\wedge (e\times e-Je\times Je).
\end{align*}
We observe that
\begin{gather*}
    4(e_2\wedge Je_3-e_3\wedge Je_2)\wedge (e_2\wedge Je_3-e_3\wedge Je_2)=8e_2\wedge Je_2\wedge e_3\wedge Je_3\\
    2(e_2\wedge e_3-Je_2\wedge Je_3)\wedge 2(e_2\wedge e_3-Je_2\wedge Je_3)=8e_2\wedge Je_2\wedge e_3\wedge Je_3
\end{gather*}
and hence
$$
\tr\left(\begin{array}{ccc}
0     &  2(e\times Je)^{\rm T} & (e\times e-Je\times Je)^{\rm T} \\
-2(e\times Je)     & 0 & 0\\
-(e\times e-Je\times Je) & 0 & 0
\end{array}\right)^2=-8\omega^2.
$$
On the other hand, 
\begin{align*}&\tr \left(\begin{array}{ccc}
0     &0 & 0  \\
0     & -Je\wedge Je^{\rm T} & Je\wedge e^{\rm T}\\
0 & e\wedge Je^{\rm T} & -e\wedge e^{\rm T}
\end{array}\right)^2=Je\wedge e^{\rm T}\wedge e\wedge Je^{\rm T}+e\wedge Je^{\rm T}\wedge Je\wedge e^{\rm T}=0.
\end{align*}
This gives the result for $\tr(Q^{\delta}_+)^2$.

From the formula \eqref{eq:Q0} for $Q_0$ we see that
\begin{align*}
    \tr(Q_0)^2=\frac{1}{2}\tr[e\times e+Je\times Je]^2-2\tr([e]\wedge [Je]-[Je]\wedge [e])^2.
\end{align*}
We then calculate
\begin{align*}
\tr[e\times e+Je\times Je]^2&=\tr\left(\begin{array}{ccc} 0 & 2e_1\wedge e_2+2Je_1\wedge Je_2 & -2e_3\wedge e_1-2Je_3\wedge Je_1 \\
-2e_1\wedge e_2-2Je_1\wedge Je_2 & 0 & 2e_2\wedge e_3+2Je_2\wedge Je_3 \\
2e_3\wedge e_1+2Je_3\wedge Je_1 & -2e_2\wedge e_3-2Je_2\wedge Je_3 & 0\end{array}\right)^2\\
&=16(e_1\wedge Je_1\wedge e_2\wedge Je_2+e_3\wedge Je_3\wedge e_1\wedge Je_1+e_2\wedge Je_2\wedge e_3\wedge Je_3)=8\omega^2
\end{align*}
and
\begin{align*}
    \tr&([e]\wedge [Je]-[Je]\wedge [e])^2\\
    &\qquad=\tr\left(\begin{array}{ccc} -2e_2\wedge Je_2-2e_3\wedge Je_3 & e_2\wedge Je_1+e_1\wedge Je_2 & e_3\wedge Je_1+e_1\wedge Je_3 \\
    e_1\wedge Je_2+e_2\wedge Je_1 & -2e_3\wedge Je_3-2e_1\wedge Je_1 & e_3\wedge Je_2+e_2\wedge Je_3\\
    e_1\wedge Je_3+e_3\wedge Je_1 & e_2\wedge Je_3+e_3\wedge Je_2 & -2e_1\wedge Je_1-2e_2\wedge Je_2\end{array}\right)^2\\
    &\qquad=8(e_2\wedge Je_2\wedge e_3\wedge Je_3+e_3\wedge Je_3\wedge e_1\wedge Je_1+e_1\wedge Je_1\wedge e_2\wedge Je_2)\\
    &\qquad\quad+4(e_1\wedge Je_2\wedge e_2\wedge Je_1+e_3\wedge Je_1\wedge e_1\wedge Je_3+e_2\wedge Je_3\wedge e_3\wedge Je_2)\\
    &\qquad =4\omega^2-2\omega^2=2\omega^2.
\end{align*}
The formula for $\tr(Q_0)^2$ then follows.
\end{proof} 

We now look at the ``cross terms'' in $(Q^{\delta}_m)^2$.

\begin{lemma}
\label{lem:nonlinear.2}
    For $Q^{\delta}_-$, $Q^{\delta}_+$ 
    in \eqref{eq:Qdelta-}--\eqref{eq:Qdelta+}, we have
\begin{align*}
    \tr(Q^{\delta}_-\wedge Q^{\delta}_++Q^{\delta}_+\wedge Q^{\delta}_-)&=0.
\end{align*}
\end{lemma}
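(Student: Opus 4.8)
The plan is to avoid any entrywise expansion of the $7\times 7$ matrices, and instead to isolate the role of $e_0$ together with the cyclicity of the trace of matrix-valued forms. Put $\tilde N:=B+\delta C$, so $Q^\delta=\tilde N\wedge\tilde N$ by \eqref{eq:BdeltaC2}. From the formula \eqref{eq:BdeltaC} for $B+\delta C$ and from \eqref{eq:mathcal.I}, the only $e_0$-terms in $\tilde N$ are those of the summand $(1-\delta)e_0\mathcal I$, so $\tilde N=N+(1-\delta)e_0\mathcal I$, where $N$ is the matrix of \emph{basic} $1$-forms obtained by deleting this summand. Since $e_0\wedge e_0=0$ and $e_0$ anticommutes with the $1$-form entries of $N$,
\[
Q^\delta=N\wedge N+(1-\delta)\,e_0\wedge[\mathcal I,N],\qquad [\mathcal I,N]:=\mathcal I N-N\mathcal I.
\]
A direct computation of $[\mathcal I,N]$ and comparison with \eqref{eq:Qdelta-} gives $Q^\delta_-=e_0\wedge[\mathcal I,N]$, and then comparing the display above with \eqref{eq:BdeltaC2} yields the purely basic identity $N\wedge N=(1+\delta)Q^\delta_++\delta^2 Q_0$.

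Using this to replace $Q^\delta_+$, and using that $Q^\delta_-$, $N\wedge N$ and $Q_0$ have even-degree entries (so that $\tr(X\wedge Y)=\tr(Y\wedge X)$ for any two of them), I obtain
\[
(1+\delta)\,\tr(Q^\delta_-\wedge Q^\delta_++Q^\delta_+\wedge Q^\delta_-)
=2\,\tr(Q^\delta_-\wedge N\wedge N)-2\delta^2\,\tr(Q^\delta_-\wedge Q_0).
\]
I claim both terms on the right vanish. For the first, pulling $e_0$ to the front gives $\tr(Q^\delta_-\wedge N\wedge N)=e_0\wedge\tr([\mathcal I,N]\wedge N\wedge N)$; since $\mathcal I$ is a constant matrix, the cyclicity of the trace of matrix-valued forms gives $\tr(\mathcal I N\wedge N\wedge N)=\tr(\mathcal I\cdot N^{\wedge 3})=\tr(N\mathcal I\wedge N\wedge N)$, whence $\tr([\mathcal I,N]\wedge N\wedge N)=0$. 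For the second, $Q_0$ has vanishing first block-row and block-column, so only the lower $3\times 3$ blocks contribute; there $(Q^\delta_-)_{22}=-(Q^\delta_-)_{11}$, $(Q^\delta_-)_{21}=(Q^\delta_-)_{12}$, while $(Q_0)_{22}=(Q_0)_{11}$, $(Q_0)_{21}=-(Q_0)_{12}$, so the $(1,1)$- and $(2,2)$-block contributions to $\tr(Q^\delta_-\wedge Q_0)$ are negatives of one another and cancel.

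Hence $(1+\delta)\,\tr(Q^\delta_-\wedge Q^\delta_++Q^\delta_+\wedge Q^\delta_-)=0$ for all $\delta\in\R$. The entries of $Q^\delta_\pm$ are affine in $\delta$, so $\tr(Q^\delta_-\wedge Q^\delta_++Q^\delta_+\wedge Q^\delta_-)$ is a polynomial in $\delta$ annihilated by $1+\delta$, hence identically zero; this is the claim. (One may also treat $\delta=-1$ directly: there $Q^{-1}_+$ is supported in its first block-row and block-column while $Q^{-1}_-$ vanishes there, so the product is traceless.)

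I expect the main obstacle to be the bookkeeping behind the three identifications $\tilde N=N+(1-\delta)e_0\mathcal I$, $Q^\delta_-=e_0\wedge[\mathcal I,N]$ and $N\wedge N=(1+\delta)Q^\delta_++\delta^2 Q_0$: these are what let the trace arguments run, and each requires a careful match with the matrix formulae of Corollary~\ref{cor:conn.delta.k} and Proposition~\ref{prop:curv.k}, including the signs in the cyclicity rule for traces of matrix-valued forms. Beyond that, no new identities of the kind in Lemma~\ref{lem:ImOmega.identities} or Lemma~\ref{lem: computational rules} are needed.
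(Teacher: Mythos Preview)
Your argument is correct, and it follows a genuinely different route from the paper's proof.

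The paper proceeds by direct entrywise computation: it splits $Q^\delta_-$ and $Q^\delta_+$ into the pieces displayed in \eqref{eq:Qdelta-.split} and \eqref{eq:Qdelta+.split}, computes the traces of all the cross-products in the local coframe, obtains intermediate expressions such as $-24\,e_0\wedge\Im\Omega$ and $-12\,e_0\wedge\Im\Omega$, and then observes that these cancel once the $\delta$-dependent coefficients are assembled. Your approach is structural rather than coordinate-based: the decomposition $B+\delta C=N+(1-\delta)e_0\mathcal I$ with $N$ basic, together with the identifications $Q^\delta_-=e_0\wedge[\mathcal I,N]$ and $N\wedge N=(1+\delta)Q^\delta_++\delta^2 Q_0$, reduces the problem to two clean trace vanishings. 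The first, $\tr([\mathcal I,N]\wedge N\wedge N)=0$, is a one-line consequence of the fact that $\mathcal I$ is a constant matrix and hence commutes past the trace; the second, $\tr(Q^\delta_-\wedge Q_0)=0$, follows from the block (anti-)symmetries you noted and in fact gives a shorter proof of the paper's Lemma~\ref{lem:nonlinear.3} as well. The final polynomial-in-$\delta$ step (or the direct check at $\delta=-1$) is unproblematic. What your method buys is that no explicit $\Im\Omega$ identities or entrywise expansions are needed, and the cancellation is explained rather than verified; what the paper's method buys is that it requires no preliminary structural identifications and stays entirely within the concrete matrix calculus already set up.
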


\begin{proof}
Just as for $Q^{\delta}_+$ in \eqref{eq:Qdelta+.split} we can split $Q^{\delta}_-$ as
\begin{equation}\label{eq:Qdelta-.split}
    Q^{\delta}_-=(1+\delta)e_0\wedge \left(\begin{array}{ccc}
    0     & e^{\rm T} & Je^{\rm T} \\
    -e     &  0 & 0\\
    -Je & 0 & 0
    \end{array}\right)-2\delta e_0\wedge\left(\begin{array}{ccc} 0 & 0 & 0 \\
    0 & [Je] & [e] \\
    0 & [e] & -[Je]\end{array}\right).
\end{equation}
Hence, we can break down the calculation of $ \tr(Q^{\delta}_-\wedge Q^{\delta}_++Q^{\delta}_+\wedge Q^{\delta}_-)$ into more manageable steps.  First, we see that
\begin{align*}
&    \tr\left( e_0\wedge \left(\begin{array}{ccc}
    0     & e^{\rm T} & Je^{\rm T} \\
    -e     &  0 & 0\\
    -Je & 0 & 0
    \end{array}\right)\wedge \left(\begin{array}{ccc}
0     & 2(e\times Je)^{\rm T} & (e\times e-Je\times Je)^{\rm T} \\
-2(e\times Je)     & 0 & 0\\
-(e\times e-Je\times Je) & 0 & 0
\end{array}\right)\right)\\
&+\tr\left(\left(\begin{array}{ccc}
0     & 2(e\times Je)^{\rm T} & (e\times e-Je\times Je)^{\rm T} \\
-2(e\times Je)     & 0 & 0\\
-(e\times e-Je\times Je) & 0 & 0
\end{array}\right) \wedge e_0\wedge \left(\begin{array}{ccc}
    0     & e^{\rm T} & Je^{\rm T} \\
    -e     &  0 & 0\\
    -Je & 0 & 0
    \end{array}\right)\right)\\
    &= 2e_0\wedge \big(-2e^{\rm T}\wedge (e\times Je)-Je^{\rm T}\wedge (e\times e-Je\times Je)-2\tr(e\wedge(e\times Je)^{\rm T})-\tr(Je\wedge (e\times e-Je\times Je)^{\rm T})
    \big)\\
    &=4e_0\wedge \big(-2e^{\rm T}\wedge (e\times Je)-Je^{\rm T}\wedge (e\times e-Je\times Je\big).
\end{align*}
We observe that
\begin{align*}
    2e^{\rm T}\wedge (e\times Je)&=2e_1\wedge (e_2\wedge Je_3-e_3\wedge Je_2)+2e_2\wedge (e_3\wedge Je_1-e_1\wedge Je_3)\\
    &\qquad+2e_3\wedge (e_1\wedge Je_2-e_2\wedge Je_1)\\
    &=4\Im\Omega+4Je_1\wedge Je_2\wedge Je_3,\\
    Je^{\rm T}\wedge (e\times e-Je\times Je)&=2Je_1\wedge (e_2\wedge e_3-Je_2\wedge Je_3)+2Je_2\wedge (e_3\wedge e_1-Je_3\wedge Je_1)\\
&\qquad    +2Je_3\wedge (e_1\wedge e_2-Je_3\wedge Je_1),\\
    &=2\Im\Omega -4 Je_1\wedge Je_2\wedge Je_3
\end{align*}
and thus
\begin{align*}
4e_0\wedge \big(-2e^{\rm T}\wedge (e\times Je)-Je^{\rm T}\wedge (e\times e-Je\times Je\big)=-24 e_0\wedge \Im\Omega.
    \end{align*}
    
    Now, clearly, 
\begin{align*}
     \tr\left( e_0\wedge \left(\begin{array}{ccc}
    0     & e^{\rm T} & Je^{\rm T} \\
    -e     &  0 & 0\\
    -Je & 0 & 0
    \end{array}\right)\wedge\left(\begin{array}{ccc}
0     &0 & 0  \\
0     & -Je\wedge Je^{\rm T} & Je\wedge e^{\rm T}\\
0 & e\wedge Je^{\rm T} & -e\wedge e^{\rm T}
\end{array}\right)\right)&=0,\\
    \tr\left(e_0\wedge\left(\begin{array}{ccc} 0 & 0 & 0 \\
    0 & [Je] & [e] \\
    0 & [e] & -[Je]\end{array}\right)\wedge \left(\begin{array}{ccc}
0     & 2(e\times Je)^{\rm T} & (e\times e-Je\times Je)^{\rm T} \\
-2(e\times Je)     & 0 & 0\\
-(e\times e-Je\times Je) & 0 & 0
\end{array}\right) \right) &=0,
    \end{align*}
    so for $\tr (Q^{\delta}_-\wedge Q^{\delta}_+)$ we are simply left with computing
\begin{align*}
      &\tr\left(e_0\wedge\left(\begin{array}{ccc} 0 & 0 & 0 \\
    0 & [Je] & [e] \\
    0 & [e] & -[Je]\end{array}\right)\wedge \left(\begin{array}{ccc}
0     &0 & 0  \\
0     & -Je\wedge Je^{\rm T} & Je\wedge e^{\rm T}\\
0 & e\wedge Je^{\rm T} & -e\wedge e^{\rm T}
\end{array}\right)\right)\\
&+\tr\left(\left(\begin{array}{ccc}
0     &0 & 0  \\
0     & -Je\wedge Je^{\rm T} & Je\wedge e^{\rm T}\\
0 & e\wedge Je^{\rm T} & -e\wedge e^{\rm T}
\end{array}\right)\wedge e_0\wedge\left(\begin{array}{ccc} 0 & 0 & 0 \\
    0 & [Je] & [e] \\
    0 & [e] & -[Je]\end{array}\right) \right)\\
    &=2e_0\wedge \tr\big([Je]\wedge (e\wedge e^{\rm T}-Je\wedge Je^{\rm T})+[e]\wedge (e\wedge Je^{\rm T}+Je\wedge e^{\rm T})\big).
\end{align*}    
To conclude, we notice that
\begin{align*}
 \tr\big([Je]\wedge (e\wedge e^{\rm T}-Je\wedge Je^{\rm T})\big)&=-2Je_3\wedge e_1\wedge e_2-2Je_2\wedge e_3\wedge e_1-2Je_1\wedge e_2\wedge e_3    +6Je_1\wedge Je_2\wedge Je_3 \\
 &=-2\Im\Omega+4Je_1\wedge Je_2\wedge Je_3,\\
 \tr\big([e]\wedge (e\wedge Je^{\rm T}+Je\wedge e^{\rm T})\big)&=2e_3\wedge (e_2\wedge Je_1+Je_2\wedge e_1)+2e_2\wedge (e_3\wedge Je_1+Je_3\wedge e_1)\\
 &\quad+2e_1\wedge (e_3\wedge Je_2+Je_3\wedge e_2)\\
 &=-4\Im\Omega-4Je_1\wedge Je_2\wedge Je_3,
\end{align*}
which gives 
$$
2e_0\wedge \tr\big([Je]\wedge (e\wedge e^{\rm T}-Je\wedge Je^{\rm T})+[e]\wedge (e\wedge Je^{\rm T}+Je\wedge e^{\rm T})\big)=-12e_0\wedge\Im\Omega.
$$
Hence, as claimed,
\begin{align*}
    \tr (Q^{\delta}_-\wedge Q^{\delta}_++Q^{\delta}_+\wedge Q^{\delta}_-)
    &=(1+\delta)\delta (-24e_0\wedge \Im\Omega)-2\delta(1+\delta)(-12e_0\wedge\Im\Omega)=0. 
    \qedhere
\end{align*}
\end{proof}

\begin{lemma}
\label{lem:nonlinear.3}
    For $Q^{\delta}_-$, $Q_0$, 
    respectively in \eqref{eq:Qdelta-}, \eqref{eq:Q0},  we have
\begin{align*}
    \tr(Q^{\delta}_-\wedge Q_0+Q_0\wedge Q^{\delta}_-)&=0.
\end{align*}
\end{lemma}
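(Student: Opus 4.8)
The plan is to exploit the block decompositions of $Q^{\delta}_-$ and $Q_0$ and to show the trace of the relevant product vanishes for purely structural reasons, without evaluating any of the underlying wedge products of basic forms.

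First I would invoke the splitting \eqref{eq:Qdelta-.split}, namely $Q^{\delta}_- = (1+\delta)\,e_0\wedge P_1 - 2\delta\,e_0\wedge P_2$, where $P_1$ and $P_2$ are the matrices of $1$-forms $P_1 = \left(\begin{smallmatrix} 0 & e^{\rm T} & Je^{\rm T} \\ -e & 0 & 0 \\ -Je & 0 & 0\end{smallmatrix}\right)$ and $P_2 = \left(\begin{smallmatrix} 0 & 0 & 0 \\ 0 & [Je] & [e] \\ 0 & [e] & -[Je]\end{smallmatrix}\right)$, and recall from \eqref{eq:Q0} that $Q_0 = \tfrac12\left(\begin{smallmatrix} 0 & 0 & 0 \\ 0 & -S & -2T \\ 0 & 2T & -S\end{smallmatrix}\right)$ with $S := [e\times e + Je\times Je]$ and $T := [e]\wedge[Je] - [Je]\wedge[e]$. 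By linearity of the trace, it suffices to show $\tr\bigl((e_0\wedge P_r)\wedge Q_0 + Q_0\wedge (e_0\wedge P_r)\bigr) = 0$ for $r=1,2$; since $e_0$ is a scalar $1$-form and $Q_0$ has even degree, the left-hand side equals $e_0\wedge\tr(P_r\wedge Q_0 + Q_0\wedge P_r)$, so we are reduced to proving $\tr(P_r\wedge Q_0) = \tr(Q_0\wedge P_r) = 0$.

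For $r=1$: $P_1$ is ``block anti-diagonal'' for the partition $7 = 1+3+3$ — it pairs only the $e_0$-direction with the two horizontal $3$-blocks — whereas $Q_0$ has vanishing first row and column. Consequently $P_1\wedge Q_0$ and $Q_0\wedge P_1$ have entries only off the block diagonal, so both of their traces vanish identically. For $r=2$: the nontrivial $2\times2$ block of $P_2$ has the shape $\left(\begin{smallmatrix} X & Y \\ Y & -X\end{smallmatrix}\right)$ and that of $Q_0$ the shape $\left(\begin{smallmatrix} X' & Y' \\ -Y' & X'\end{smallmatrix}\right)$; expanding the wedge product of these block matrices shows that the lower-right $3\times3$ diagonal block of $P_2\wedge Q_0$ is exactly the negative of its middle $3\times3$ diagonal block, so their traces cancel, and likewise for $Q_0\wedge P_2$. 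Assembling the pieces, $\tr(Q^{\delta}_-\wedge Q_0 + Q_0\wedge Q^{\delta}_-) = (1+\delta)\,e_0\wedge\tr(P_1\wedge Q_0 + Q_0\wedge P_1) - 2\delta\,e_0\wedge\tr(P_2\wedge Q_0 + Q_0\wedge P_2) = 0$.

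There is no genuine obstacle here; the only points that require care are the sign conventions for wedge products of matrix-valued forms — in particular that $Q_0\wedge e_0 = e_0\wedge Q_0$ and that pulling the scalar $1$-form $e_0$ out of a trace is harmless — together with the observation that no explicit evaluation of $[e]\wedge S$, $[e]\wedge T$, and the like is needed, the cancellation being forced by the block structure and linearity of the trace alone. If one prefers a more concrete verification, one may instead expand $S$ and $T$ via Lemma~\ref{lem: computational rules} and check the cancellation entry by entry, at the price of a considerably longer computation.
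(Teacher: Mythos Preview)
Your proof is correct and follows essentially the same approach as the paper: both use the splitting \eqref{eq:Qdelta-.split}, dismiss the $P_1$-contribution because $Q_0$ has vanishing first row and column, and then observe that the two $3\times 3$ diagonal blocks of $P_2\wedge Q_0$ are negatives of each other so that the trace vanishes. The paper writes out those diagonal blocks explicitly as $-[Je]\wedge S+2[e]\wedge T$ and $-2[e]\wedge T+[Je]\wedge S$, whereas you package the cancellation via the $\left(\begin{smallmatrix} X & Y\\ Y & -X\end{smallmatrix}\right)\wedge\left(\begin{smallmatrix} X' & Y'\\ -Y' & X'\end{smallmatrix}\right)$ block structure, which is a slightly cleaner formulation of the same observation.
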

\begin{proof}
    Recall the splitting \eqref{eq:Qdelta-.split}.  Since we have
\begin{align*}
    &\tr\left(   e_0\wedge
    \left(\begin{array}{ccc} 
        0 & 0 & 0 \\
        0 & [Je] & [e] \\
        0 & [e] & -[Je]
    \end{array}\right)
    \wedge 
    \left(\begin{array}{ccc} 
        0 & 0 & 0 \\ 
        0 & -[e\times e+Je\times Je] & -2([e]\wedge [Je]-[Je]\wedge [e])\\
        0 & 2([e]\wedge [Je] -[Je]\wedge [e]) & -[e\times e+Je\times Je]
    \end{array}\right)\right)\\
    &=e_0\wedge \tr(-[Je]\wedge [e\times e+Je\times Je]+2[e]\wedge ([e]\wedge [Je]-[Je]\wedge [e]))\\
    &\quad +e_0\wedge\tr(-2[e]\wedge ([e]\wedge [Je]-[Je]\wedge[e])+[Je]\wedge[e\times e+Je\times Je])\\
    &=0,
\end{align*}
the result then follows from \eqref{eq:Qdelta-.split} and \eqref{eq:Q0}.
\end{proof}

\begin{lemma}
\label{lem:nonlinear.4}
    For $Q^{\delta}_+$, $Q_0$, respectively
    in \eqref{eq:Qdelta+}, \eqref{eq:Q0},  we have
\begin{align*}
   \tr(Q^{\delta}_+\wedge Q_0+Q_0\wedge Q^{\delta}_+)&= 16(1+\delta)\omega^2.
\end{align*}
\end{lemma}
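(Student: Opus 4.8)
The plan is to exploit the block structures of $Q^{\delta}_+$ and $Q_0$, in the spirit of the proofs of Lemmas \ref{lem:nonlinear.2} and \ref{lem:nonlinear.3}. First I would use the splitting \eqref{eq:Qdelta+.split}, $Q^{\delta}_+=\delta P+(1+\delta)S$, where $P$ denotes the matrix in \eqref{eq:Qdelta+.split} multiplied by $\delta$ (whose only non-zero blocks lie in the first row and first column) and $S$ denotes the one multiplied by $1+\delta$ (supported on the lower-right $2\times2$ array of $3\times3$ blocks). Since $Q_0$ in \eqref{eq:Q0} is also supported on that lower-right array, block multiplication shows that $P\wedge Q_0$ is supported in the first row of blocks and $Q_0\wedge P$ in the first column; in particular both have vanishing diagonal blocks, so $\tr(P\wedge Q_0+Q_0\wedge P)=0$ by an elementary support argument. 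Thus the left-hand side equals $(1+\delta)\,\tr(S\wedge Q_0+Q_0\wedge S)$, and it remains to show this trace is $16\omega^2$.

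Next I would carry out the block multiplication of $S$ with $Q_0$ and take the trace of the resulting two diagonal $3\times3$ blocks (both matrices having zero first row and column of blocks, the products live entirely in the lower-right $6\times6$ block). Writing $M:=[e\times e+Je\times Je]$, $N:=[e]\wedge[Je]-[Je]\wedge[e]$, $P_1:=Je\wedge Je^{\rm T}$, $P_2:=Je\wedge e^{\rm T}$, $P_3:=e\wedge Je^{\rm T}$, $P_4:=e\wedge e^{\rm T}$, and recalling from \eqref{eq:Q0} that the diagonal blocks of $Q_0$ are $-\tfrac12 M$ while the off-diagonal ones are $\mp N$, a direct expansion gives
\[
\tr(S\wedge Q_0+Q_0\wedge S)=\tfrac12\tr(P_1\wedge M+M\wedge P_1)+\tfrac12\tr(P_4\wedge M+M\wedge P_4)+\tr(P_2\wedge N+N\wedge P_2)-\tr(P_3\wedge N+N\wedge P_3).
\]

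Finally I would substitute, via Lemma \ref{lem: computational rules}, the identities $M=2P_1+2P_4$ and $N=P_3-P_2-2\omega I$ (already used in the proof of Lemma \ref{lem:tr.FA.Q0}), reducing everything to the elementary traces $\tr(P_a\wedge P_b)$ and $\tr(P_a\wedge\omega I)$. Each of these is either $0$ — whenever a basic $1$-form is repeated, e.g.\ $\tr(P_1\wedge P_1)=\tr(P_2\wedge P_3)=0$ — or $\pm\omega^2$, obtained by collapsing the surviving products using $\sum_i e_i\wedge Je_i=\omega$: namely $\tr(P_1\wedge P_4)=\tr(P_4\wedge P_1)=\omega^2$, $\tr(P_2\wedge P_2)=\tr(P_3\wedge P_3)=-\omega^2$, $\tr(P_2\wedge\omega I)=-\omega^2$, $\tr(P_3\wedge\omega I)=\omega^2$. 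These give $\tr(P_1\wedge M+M\wedge P_1)=\tr(P_4\wedge M+M\wedge P_4)=4\omega^2$, $\tr(P_2\wedge N+N\wedge P_2)=6\omega^2$ and $\tr(P_3\wedge N+N\wedge P_3)=-6\omega^2$, hence $\tr(S\wedge Q_0+Q_0\wedge S)=2\omega^2+2\omega^2+6\omega^2+6\omega^2=16\omega^2$ and the claim follows. The only delicate point is the sign bookkeeping — in the block trace, and when reassembling wedge products into powers of $\omega$ — but there is no conceptual obstacle.
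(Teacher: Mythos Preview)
Your proposal is correct and follows essentially the same approach as the paper: both use the splitting \eqref{eq:Qdelta+.split} to discard the $\delta P$ contribution by a support argument, then reduce the trace of $S\wedge Q_0+Q_0\wedge S$ via Lemma~\ref{lem: computational rules} to elementary traces of the matrices $e\wedge e^{\rm T}$, $Je\wedge Je^{\rm T}$, $e\wedge Je^{\rm T}$, $Je\wedge e^{\rm T}$ against one another and against $\omega I$. The paper organises the final bookkeeping slightly differently, grouping terms as $2\tr(Je\wedge Je^{\rm T}+e\wedge e^{\rm T})^2$, $-2\tr(Je\wedge e^{\rm T}-e\wedge Je^{\rm T})^2$ and $-4\omega\wedge\tr(Je\wedge e^{\rm T}-e\wedge Je^{\rm T})$, but this amounts to the same computation as yours.
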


\begin{proof}
Recall the splitting \eqref{eq:Qdelta+.split}.  We see that to calculate $\tr(Q^{\delta}_+\wedge Q_0)$ it suffices to compute the following:
\begin{align*}
&    \tr \left(\left(\begin{array}{ccc}
0     &0 & 0  \\
0     & -Je\wedge Je^{\rm T} & Je\wedge e^{\rm T}\\
0 & e\wedge Je^{\rm T} & -e\wedge e^{\rm T}
\end{array}\right)\wedge   \left(\begin{array}{ccc} 
        0 & 0 & 0 \\ 
        0 & -[e\times e+Je\times Je] & -2([e]\wedge [Je]-[Je]\wedge [e])\\
        0 & 2([e]\wedge [Je] -[Je]\wedge [e]) & -[e\times e+Je\times Je]
    \end{array}\right)\right) \\
    &=\tr\big((Je\wedge Je^{\rm T}+e\wedge e^{\rm T})\wedge[e\times e+Je\times Je])+2(Je\wedge e^{\rm T}-e\wedge Je^{\rm T})\wedge ([e]\wedge [Je]-[Je]\wedge [e])\big)\\
    &=2\tr(Je\wedge Je^{\rm T}+e\wedge e^{\rm T})^2-2\tr(Je\wedge e^{\rm T}-e\wedge Je^{\rm T})^2 -4\omega\wedge\tr(Je\wedge e^{\rm T}-e\wedge Je^{\rm T})
\end{align*}
by Lemma \ref{lem: computational rules}.

We first see that
\begin{align*}
    2\tr(Je\wedge Je^{\rm T}+e\wedge e^{\rm T})^2&=2(4e_1\wedge e_2\wedge Je_2\wedge Je_1+4e_3\wedge e_1\wedge Je_1\wedge Je_3+4e_2\wedge e_3\wedge Je_3\wedge Je_2)\\
    &=4\omega^2.
\end{align*}
We also see that
\begin{align*}
-2\tr(Je\wedge e^{\rm T}-e\wedge Je^{\rm T})^2&=-2\tr(Je\wedge e^{\rm T})^2-2\tr(e\wedge Je^{\rm T})^2\\
&=-2( 2Je_1\wedge e_2\wedge Je_2\wedge e_1+2Je_3\wedge e_1\wedge Je_1\wedge e_3+2Je_2\wedge e_3\wedge Je_3\wedge e_2)\\
&\quad-2(2e_1\wedge Je_2\wedge e_2\wedge Je_1+2e_3\wedge Je_1\wedge e_1\wedge Je_3+2e_2\wedge Je_3\wedge e_3\wedge Je_2)\\
&=4\omega^2
\end{align*}
and
\begin{align*}
    -4\omega\wedge\tr(Je\wedge e^{\rm T}-e\wedge Je^{\rm T})&=-4\omega\wedge (-2\omega)=8\omega^2.
\end{align*}
Hence,
\begin{align*}
&    \tr \left(\left(\begin{array}{ccc}
0     &0 & 0  \\
0     & -Je\wedge Je^{\rm T} & Je\wedge e^{\rm T}\\
0 & e\wedge Je^{\rm T} & -e\wedge e^{\rm T}
\end{array}\right)\wedge   \frac{1}{2}\left(\begin{array}{ccc} 
        0 & 0 & 0 \\ 
        0 & -[e\times e+Je\times Je] & -2([e]\wedge [Je]-[Je]\wedge [e])\\
        0 & 2([e]\wedge [Je] -[Je]\wedge [e]) & -[e\times e+Je\times Je]
    \end{array}\right)\right) \\
    &=\frac{1}{2}(4\omega^2+4\omega^2+8\omega^2)=8\omega^2.
    \end{align*}
The result then follows from \eqref{eq:Qdelta+.split} and \eqref{eq:Q0}.
\end{proof}

\begin{cor}
\label{cor:tr.Qdeltam} 
    For $Q^{\delta}_m$ in \eqref{eq:Q.delta.m}, we have
\begin{align*}
    \tr(Q^{\delta}_m)^2=8\delta^2(1+\delta)^2\omega^2.
\end{align*}
\end{cor}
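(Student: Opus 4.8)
The plan is to assemble $\tr(Q^\delta_m)^2$ from the pieces already prepared, using the defining decomposition \eqref{eq:Q.delta.m}, namely $Q^\delta_m = (1-\delta+m)Q^\delta_- + (1+\delta)Q^\delta_+ + \delta^2 Q_0$. Expanding the square of this sum produces three ``square'' terms and three ``cross'' terms, and every one of them has been computed in Lemmas \ref{lem:nonlinear.1}--\ref{lem:nonlinear.4}. So the proof is essentially a bookkeeping exercise in collecting coefficients.

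First I would write out the expansion:
\begin{align*}
\tr(Q^\delta_m)^2
&= (1-\delta+m)^2\tr(Q^\delta_-)^2 + (1+\delta)^2\tr(Q^\delta_+)^2 + \delta^4\tr(Q_0)^2\\
&\quad + (1-\delta+m)(1+\delta)\tr(Q^\delta_-\wedge Q^\delta_+ + Q^\delta_+\wedge Q^\delta_-)\\
&\quad + (1-\delta+m)\delta^2\tr(Q^\delta_-\wedge Q_0 + Q_0\wedge Q^\delta_-)\\
&\quad + (1+\delta)\delta^2\tr(Q^\delta_+\wedge Q_0 + Q_0\wedge Q^\delta_+).
\end{align*}
Then I substitute: $\tr(Q^\delta_-)^2 = 0$ and $\tr(Q_0)^2 = 0$ by Lemma \ref{lem:nonlinear.1}, so the first and third square terms drop out; $\tr(Q^\delta_+)^2 = -8\delta^2\omega^2$ by Lemma \ref{lem:nonlinear.1}; the $Q^\delta_-$--$Q^\delta_+$ cross term vanishes by Lemma \ref{lem:nonlinear.2}; the $Q^\delta_-$--$Q_0$ cross term vanishes by Lemma \ref{lem:nonlinear.3}; and the $Q^\delta_+$--$Q_0$ cross term equals $16(1+\delta)\omega^2$ by Lemma \ref{lem:nonlinear.4}. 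What survives is
\[
\tr(Q^\delta_m)^2 = (1+\delta)^2(-8\delta^2\omega^2) + (1+\delta)\delta^2\cdot 16(1+\delta)\omega^2 = \big(-8\delta^2(1+\delta)^2 + 16\delta^2(1+\delta)^2\big)\omega^2 = 8\delta^2(1+\delta)^2\omega^2,
\]
which is the claimed identity.

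Since $\tr$ is linear and invariant under the swap in $P\wedge P' + P'\wedge P$ (which is exactly why the lemmas are phrased symmetrically), there is no ordering subtlety to worry about. The only thing that requires a moment's care — and hence the closest thing to an obstacle — is making sure the coefficient arithmetic is done against the exact normalizations used in the statements of Lemmas \ref{lem:nonlinear.1}--\ref{lem:nonlinear.4} (in particular that Lemma \ref{lem:nonlinear.4} is stated for $\tr(Q^\delta_+\wedge Q_0 + Q_0\wedge Q^\delta_+)$ without the extra $(1+\delta)$ factor already absorbed, so that factor must be reinstated from \eqref{eq:Q.delta.m}). Beyond that, the result is immediate.
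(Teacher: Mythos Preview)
Your proof is correct and follows essentially the same approach as the paper: expand $(Q^{\delta}_m)^2$ using the decomposition \eqref{eq:Q.delta.m}, invoke Lemmas \ref{lem:nonlinear.1}--\ref{lem:nonlinear.4} for each of the six resulting terms, and collect the two surviving contributions $-8\delta^2(1+\delta)^2\omega^2$ and $16\delta^2(1+\delta)^2\omega^2$. Your additional remark about the normalization in Lemma \ref{lem:nonlinear.4} is a helpful clarification, but the argument itself is identical to the paper's.
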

\begin{proof}
From the definition of $Q^{\delta}_m$ in \eqref{eq:Q.delta.m}, using Lemmas \ref{lem:nonlinear.1}-\ref{lem:nonlinear.4}, we compute: 
\begin{align*}
    \tr(Q^{\delta}_m)^2
    &=\tr\big((1-\delta+m)Q^{\delta}_-+(1+\delta)Q^{\delta}_++\delta^2Q_0\big)^2\\
    &=(1-\delta+m)^2\tr(Q^{\delta}_-)^2 +(1+\delta)^2\tr(Q^{\delta}_+)^2 +\delta^4\tr(Q_0^2) +(1-\delta+m)(1+\delta)\tr(Q^{\delta}_-\wedge Q^{\delta}_+ +Q^{\delta}_+\wedge Q^{\delta}_-)\\
    &\quad +(1-\delta+m)\delta^2\tr(Q^{\delta}_-\wedge Q_0+Q_0\wedge Q^{\delta}_-) +(1+\delta)\delta^2\tr(Q^{\delta}_+\wedge Q_0+Q_0\wedge Q^{\delta}_+)\\
    &=-8(1+\delta)^2\delta^2\omega^2 +16(1+\delta)^2\delta^2\omega^2\\
    &=8\delta^2(1+\delta)^2\omega^2.\qedhere
\end{align*}
\end{proof}

Combining Corollary \ref{cor:tr.Qdeltam} and \eqref{eq:trace.4}, we conclude that
\begin{equation}
    \tr(R_\theta^2-F_A^2)
    =\frac{k^2\epsilon^4\big(k^2\delta^2(1+\delta)^2 +(1-\delta+m)\big(k(4\delta^2-(1+\delta)^2)-3\big)\big)}{2}\omega^2,
    \qwithq
    \theta=\theta^{\delta,k}_{\epsilon,m} .
\end{equation}

\subsection{Proof of Theorem \ref{thm: Main Theorem}}
\label{sec: proof of main theorem}

We are now in position to prove the final parts (iv) and (v) in Theorem \ref{thm: Main Theorem}.
Replacing the Chern--Simons defect \eqref{eq:trace.4}, between gauge fields $A$ and $\theta$, in the heterotic Bianchi identity \eqref{eq:anomaly.eps.delta.m.k}, we obtain
\begin{equation}
    -\epsilon^2\omega^2=-\frac{\alpha'}{4}\frac{k^2\epsilon^4\big(k^2\delta^2(1+\delta)^2+(1-\delta+m)\big(k(4\delta^2-(1+\delta)^2)-3\big)\big)}{2}\omega^2.
\end{equation}
Hence, there is a solution for $\alpha'>0$ if, and only if,
\begin{equation}
    k^2\big(k^2\delta^2(1+\delta)^2+(1-\delta+m)\big(k(4\delta^2-(1+\delta)^2)-3\big)\big)>0,
\end{equation}
in which case
\begin{equation}
    \alpha'=\frac{8}{k^2\epsilon^2\big(k^2\delta^2(1+\delta)^2+(1-\delta+m)\big(k(4\delta^2-(1+\delta)^2)-3\big)\big)}.
\end{equation}
We deduce the following constraints to have \color{black} an approximate solution to the heterotic $\rG_2$ system, in the sense that all of the conditions in Definition~\ref{dfn:heterotic.G2} for the heterotic $\rG_2$ system are satisfied except that we only require that $\theta$ be a $\rG_2$-instanton to order $O(\alpha')^2$ as in \eqref{eq:order.alpha}\color{black}:

\begin{prop}
There is an approximate solution to the heterotic $\rG_2$ system  if and only if
\begin{equation}
\label{eq:lambda_0}
    \lambda_0:=k^2\epsilon^2\big(k^2\delta^2(1+\delta)^2+(1-\delta+m)\big(k(4\delta^2-(1+\delta)^2)-3\big)\big)>0
\end{equation}
is large so that
\begin{equation}
    \alpha'=\frac{8}{\lambda_0}>0
\end{equation}
is small and the terms in the $\rG_2$-instanton condition \eqref{eq:G2.inst.delta.eps.k},
\begin{equation} 
    \lambda_1:=\frac{k\epsilon^2\big(6(1-\delta+m) +k(1-\delta)(1+3\delta)\big)}{4},
    \quad 
    \lambda_2:=\frac{k^2\epsilon^2
    }{4}(1+m-5\delta)(1+\delta), 
    \quad 
    \lambda_3:=\frac{k^2\epsilon^2}{4} (\delta^2-2(2+m)\delta-1)  
    \end{equation}
are all $O(\alpha')^2$, so that \eqref{eq:order.alpha} is satisfied.
\end{prop}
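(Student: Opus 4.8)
The plan is to treat this Proposition as the bookkeeping step that collects the computations of the previous sections. First I would observe that conditions (i) and (ii) of an approximate solution --- namely that $\varphi_\epsilon$ is coclosed and satisfies \eqref{eq:torsion.conditions} with the stated $\lambda,\mu,H_\epsilon$, and that $A$ is a $\rG_2$-instanton on $E$ --- are exactly Lemmas \ref{lem:torsion.eps} and \ref{lem:flux}, respectively Lemma \ref{lem:A}; these are unconditional and impose no constraint on the parameters. Hence the entire content of the Proposition concerns the remaining two requirements for $\theta=\theta^{\delta,k}_{\epsilon,m}$: the heterotic Bianchi identity \eqref{eq:anomaly} and the relaxed instanton condition \eqref{eq:order.alpha}.

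For the Bianchi identity, I would substitute the flux derivative $dH_\epsilon=-\epsilon^2\omega^2$ from Lemma \ref{lem:flux} and the concluding trace formula of the previous subsection (obtained from \eqref{eq:trace.4} and Corollary \ref{cor:tr.Qdeltam}), which in terms of \eqref{eq:lambda_0} reads
\[
\tr\bigl(R_\theta^2-F_A^2\bigr)=\tfrac{\epsilon^2}{2}\,\lambda_0\,\omega^2,
\]
into \eqref{eq:anomaly}, obtaining $-\epsilon^2\omega^2=-\tfrac{\alpha'}{8}\epsilon^2\lambda_0\,\omega^2$. Since $\omega^2$ is a nowhere-vanishing $4$-form (the pullback of the square of the transverse symplectic form on the $6$-dimensional leaf space) and $\epsilon>0$, this is equivalent to the scalar identity $1=\tfrac{\alpha'}{8}\lambda_0$. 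Thus \eqref{eq:anomaly} admits a solution with $\alpha'>0$ precisely when $\lambda_0>0$, in which case necessarily $\alpha'=8/\lambda_0$, which is the first assertion.

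For the instanton condition I would invoke Corollary \ref{cor:inst.delta.m.k}, which expresses $R_\theta\wedge\psi_\epsilon$ as $\lambda_1\tfrac{\omega^3}{6}\mathcal I$ plus $e_0\wedge\tfrac{\omega^2}{2}$ wedged with a matrix of basic $1$-forms whose non-zero entries are, up to universal constants, $\lambda_2$ times $e_i,Je_i$ and $\lambda_3$ times the entries of $[e],[Je]$. The key point is that every coefficient form occurring here --- $\tfrac{\omega^3}{6}=\vol_V$ by \eqref{eq:volV}, and the forms $e_0\wedge\tfrac{\omega^2}{2}\wedge e_i$, $e_0\wedge\tfrac{\omega^2}{2}\wedge[e]_{ij}$, and so on --- is built from $e_0=\epsilon\eta$ together with the $\epsilon$-independent $\SU(3)$-coframe $\{e_1,\dots,Je_3\}$, which is $g_\epsilon$-orthonormal by Definitions \ref{dfn:G2.str} and \ref{dfn:coframe}. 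Hence each such form has pointwise $g_\epsilon$-norm equal to a fixed constant independent of $\epsilon$, and there is a universal $c>0$ with
\[
c^{-1}\bigl(|\lambda_1|+|\lambda_2|+|\lambda_3|\bigr)\le |R_\theta\wedge\psi_\epsilon|_{g_\epsilon}\le c\,\bigl(|\lambda_1|+|\lambda_2|+|\lambda_3|\bigr)
\]
pointwise on $K$. Therefore \eqref{eq:order.alpha} holds if and only if $\lambda_1,\lambda_2,\lambda_3$ are all $O(\alpha')^2$ as $\alpha'\to0$; combined with the previous paragraph this is the Proposition.

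The only step that is not pure substitution is this norm comparison, and even that is essentially immediate once one notes that rescaling $\eta$ to $e_0=\epsilon\eta$ is exactly compensated by measuring in the metric $g_\epsilon$, so $e_0$ stays a unit covector and the forms above stay uniformly non-degenerate. I expect no genuine obstacle: the Proposition is a clean repackaging of \eqref{eq:anomaly.eps.delta.m.k}, \eqref{eq:G2.inst.eps.delta.m.k} and the trace computation above, with the understanding that ``$\lambda_0$ large / $\alpha'$ small / $\lambda_i=O(\alpha')^2$'' is to be read as a statement about a one-parameter family of parameter choices $(\epsilon,k,m,\delta)$ indexed by $\alpha'\to0$ --- precisely the family that the subsequent proof of Theorem \ref{thm: Main Theorem} constructs.
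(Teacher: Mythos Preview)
Your proposal is correct and follows the same approach as the paper: the Proposition is a summary of the preceding computations, obtained by substituting the trace formula from \eqref{eq:trace.4} and Corollary~\ref{cor:tr.Qdeltam} into the Bianchi identity \eqref{eq:anomaly.eps.delta.m.k} to determine $\alpha'=8/\lambda_0$, and then reading off from Corollary~\ref{cor:inst.delta.m.k} the three coefficients governing $R_\theta\wedge\psi_\epsilon$. In fact you are slightly more careful than the paper in justifying the norm comparison---the paper leaves implicit your observation that the coframe $\{e_0,e_1,\dots,Je_3\}$ is $g_\epsilon$-orthonormal so that the forms in \eqref{eq:G2.inst.eps.delta.m.k} have $\epsilon$-independent pointwise norms, whence $|R_\theta\wedge\psi_\epsilon|_{g_\epsilon}$ is uniformly comparable to $|\lambda_1|+|\lambda_2|+|\lambda_3|$.
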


Inspecting \eqref{eq:lambda_0}, there are at least three manifest Ansätze for this asymptotic regime, all of which satisfy items (i)--(v) of Theorem \ref{thm: Main Theorem}:

\begin{case}
$1-\delta+m=0$ and $\delta\neq0,-1$: 
$$
    \alpha' =\frac{8}{\delta^2(1+\delta)^2} \frac{1}{\epsilon^2k^4},
\quad
    \lambda_1=\frac{(1-\delta)(1+3\delta)}{4}k^2\epsilon^2,
\quad 
    \lambda_2=-\delta(1+\delta)k^2\epsilon^2, 
\quad 
    \lambda_3=-\frac{(\delta+1)^2}{4} k^2\epsilon^2.  
$$
In order to have $k^2\epsilon^2=O(\alpha')^2$, we may take, for instance,
$$
k^2=\frac{1}{(\alpha')^3}\qandq \epsilon^2=\frac{8}{\delta^2(1+\delta)^2}(\alpha')^5,\qwithq 
\delta\neq0,-1 \qandq 
m=\delta-1,
$$
which is physically meaningful with $\epsilon\ll 1$ and $k\gg 1$.
\end{case}

\begin{case}
$\delta=0$ and $(1+m)(k+3)<0$: 
$$
    \alpha' =-\frac{8}{(1+m)(1+\frac{3}{k})} \frac{1}{\epsilon^2k^3},
\quad
    \lambda_1=\frac{\big(1+\frac{6(1+m)}{k}\big)}{4} k^2\epsilon^2,
\quad 
    \lambda_2=\frac{1+m}{4}k^2\epsilon^2, 
\quad 
    \lambda_3=-\frac{1}{4}k^2\epsilon^2. 
$$
In order to have $k\epsilon^2=O(\alpha')^2$ and $k^2\epsilon^2=O(\alpha')^2$, we may take, for instance,
$$
k=\frac{1}{(\alpha')^3}\qandq \epsilon^2=\frac{8}{(1+m)(1+3(\alpha')^3)}(\alpha')^8,\qwithq 
m<-1,
$$
which is physically meaningful with $\epsilon\ll 1$ and $k\gg 1$.
\end{case}

\begin{case}
$\delta=-1$ and $(2+m)(4k-3)>0$: 
$$
    \alpha' =\frac{8}{(2+m)(4-\frac{3}{k})} \frac{1}{\epsilon^2k^3},
\quad
    \lambda_1=\left(\frac{3(2+m)}{2k}-1\right) k^2\epsilon^2,
\quad 
    \lambda_2=0, 
\quad 
    \lambda_3=-\frac{2+m}{2}k^2\epsilon^2. 
$$
In order to have $k\epsilon^2=O(\alpha')^2$ and $k^2\epsilon^2=O(\alpha')^2$, we may take, for instance,
$$
k=\frac{1}{(\alpha')^3}\qandq \epsilon^2=\frac{8}{(2+m)(4-3(\alpha')^3)}(\alpha')^8,\qwithq 
m>-2,
$$
which is physically meaningful with $\epsilon\ll 1$ and $k\gg 1$.
\end{case}

\noindent NB.: Several other solution regimes are possible, in particular one may adjust the choices of $m$ and $\delta$ to the string scale
$\alpha'$ itself. Furthermore, it should be noted that  the asymptotic properties of $\epsilon(\alpha')$ and $k(\alpha')$ as $\alpha'\to0$ are a \emph{consequence} of the heterotic Bianchi identity \eqref{eq:anomaly.eps.delta.m.k} and the $\rG_2$-instanton condition \eqref{eq:G2.inst.delta.eps.k} `up to $O(\alpha')^2$ terms', and therefore \emph{not a choice} imposed on the Ansatz.

\newpage
\appendix

\section{Covariant matrix operations}
\label{sec: appendix - matrix operations}

\begin{definition}
\label{dfn:box.vector}
    For a $3\times 1$ vector $a$, we define $[a]$ by
\begin{equation}
\label{eq:box.vector}
    \left[\left(\begin{array}{c}     a_1\\ a_2\\ a_3 \end{array}\right)\right]
    =
    \left(\begin{array}{ccc} 
        0 & a_3 & -a_2\\
        -a_3 & 0 & a_1\\ 
        a_2 &-a_1 & 0 
    \end{array}\right).
\end{equation}
\end{definition}
This leads us to the following definition and lemma.

\begin{definition}
\label{dfn:cross}
Let
\begin{equation*}
    a=\left(\begin{array}{c}
         a_1  \\
         a_2 \\
         a_3 \end{array}\right)\quad \text{and}\quad 
          b=\left(\begin{array}{c}
         b_1  \\
         b_2 \\
         b_3 \end{array}\right)
\end{equation*}
    be vectors of $1$-forms and define
\begin{equation}
    a\times b 
    =\left(\begin{array}{c} a_2\wedge b_3-a_3\wedge b_2\\
    a_3\wedge b_1-a_1\wedge b_3\\
    a_1\wedge b_2-a_2\wedge b_1
    \end{array}\right).
    \end{equation}
Notice that
\begin{equation}
    b\times a=a\times b.
\end{equation}
\end{definition}

\begin{lemma}
\label{lem: computational rules}
  Let $a$ and $b$ be $3\times 1$ vectors of $1$-forms. 
  Then
\begin{align}
    [a]\wedge b &=-a\times b,\\
    a^{\rm T}\wedge [b]&=-(a\times b)^{\rm T},\\
    [a]\wedge [b]+[b]\wedge [a]
    &=- [a\times b],\\
    [a]\wedge [b]-[b]\wedge [a]
    &=a\wedge b^\rT - b\wedge a^\rT -2I\otimes \sum_{j=1}^3 a_j\wedge b_j.
\end{align}
In particular,
\begin{equation}
[a]\wedge [a] =-a\wedge a^\rT = -\frac{1}{2}[a\times a].
\end{equation}
\end{lemma}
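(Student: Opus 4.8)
The plan is to deduce all the identities of Lemma \ref{lem: computational rules} from a single \emph{master formula}, namely
\[
    [a]\wedge[b] \;=\; -\,b\wedge a^{\rm T}\;-\;\Big(\sum_{j=1}^{3}a_j\wedge b_j\Big)I ,
\]
which is the wedge-product analogue (with $[a]=-[a]_\times$ in the sign convention of Definition \ref{dfn:box.vector}) of the classical identity $[a]_\times[b]_\times = b\,a^{\rm T}-(a^{\rm T}b)I$ for the cross-product matrix. I would prove it by a direct entry-by-entry computation of $([a]\wedge[b])_{ij}=\sum_{k}[a]_{ik}\wedge[b]_{kj}$: for $i\neq j$ a single line gives $a_j\wedge b_i$, and for $i=j$ one obtains $-\sum_{k\neq i}a_k\wedge b_k$; matching these against the right-hand side (using $(b\wedge a^{\rm T})_{ij}=b_i\wedge a_j=-a_j\wedge b_i$) closes the argument. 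The only care needed is in tracking the signs from anticommutativity of $1$-forms, but since every entry of $[a]$ involves only the $a_j$ and every entry of $[b]$ only the $b_j$, no reordering occurs inside a single product, so the computation is mechanical.

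First, though, I would dispose of the two identities involving a single bracket. The identity $[a]\wedge b=-a\times b$ is immediate from writing out the three components of the matrix--vector product and comparing with Definition \ref{dfn:cross}. For $a^{\rm T}\wedge[b]=-(a\times b)^{\rm T}$ I would either expand directly in the same way, or transpose the previous identity, using that $(M\wedge N)^{\rm T}=-\,N^{\rm T}\wedge M^{\rm T}$ for matrices of $1$-forms, that $[a]^{\rm T}=-[a]$, and the symmetry $b\times a=a\times b$ from Definition \ref{dfn:cross}.

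With the master formula available the rest is pure algebra. Swapping $a\leftrightarrow b$ and using $\sum_j b_j\wedge a_j=-\sum_j a_j\wedge b_j$ gives $[b]\wedge[a]=-\,a\wedge b^{\rm T}+\big(\sum_j a_j\wedge b_j\big)I$. Adding the two yields $[a]\wedge[b]+[b]\wedge[a]=-\big(a\wedge b^{\rm T}+b\wedge a^{\rm T}\big)$, and a short entrywise check shows $a\wedge b^{\rm T}+b\wedge a^{\rm T}=[a\times b]$; this gives the third identity. Subtracting gives $[a]\wedge[b]-[b]\wedge[a]=a\wedge b^{\rm T}-b\wedge a^{\rm T}-2\big(\sum_j a_j\wedge b_j\big)I$, the fourth. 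Finally, specialising to $b=a$: since $\sum_j a_j\wedge a_j=0$, the master formula gives $[a]\wedge[a]=-\,a\wedge a^{\rm T}$, while the third identity at $b=a$ gives $2\,[a]\wedge[a]=-[a\times a]$, hence also $[a]\wedge[a]=-\tfrac12[a\times a]$. Everything here is elementary; the sole "obstacle" is the bookkeeping needed to carry out the $3\times3$ matrix multiplication in the master formula without sign slips.
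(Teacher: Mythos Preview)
Your proposal is correct and follows essentially the same route as the paper: both compute $[a]\wedge[b]$ entrywise to obtain the formula $[a]\wedge[b]=-\,b\wedge a^{\rm T}-\big(\sum_j a_j\wedge b_j\big)I$ (your ``master formula''), from which the remaining identities follow by swapping $a\leftrightarrow b$ and adding or subtracting. The only cosmetic differences are that the paper verifies $a^{\rm T}\wedge[b]=-(a\times b)^{\rm T}$ by direct expansion rather than by transposition, and writes out $[b]\wedge[a]$ separately rather than invoking the swap; your explicit check that $a\wedge b^{\rm T}+b\wedge a^{\rm T}=[a\times b]$ is a step the paper leaves implicit.
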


\begin{proof}
We first see that
\begin{align*}
    [a]\wedge b&=  \left(\begin{array}{ccc} 
        0 & a_3 & -a_2\\
        -a_3 & 0 & a_1\\ 
        a_2 &-a_1 & 0 
    \end{array}\right)\wedge \left(\begin{array}{c}
         b_1  \\
         b_2 \\
         b_3 
    \end{array}\right)
    =
    \left(\begin{array}{c}
        a_3\wedge b_2-a_2\wedge b_3\\
        a_1\wedge b_3-a_3\wedge b_1\\
        a_2\wedge b_1-a_1\wedge b_2
    \end{array}\right)\\
    &=-a\times b
\end{align*}
by Definition \ref{dfn:cross}.  Similarly, 
\begin{align*}
    a^{\rm T}\wedge [b]&=(\begin{array}{ccc} a_1 & a_2 & a_3\end{array})\wedge \left(\begin{array}{ccc} 
        0 & b_3 & -b_2\\
        -b_3 & 0 & b_1\\ 
        b_2 &-b_1 & 0 
    \end{array}\right)\\
    &=(\begin{array}{ccc} -a_2\wedge b_3+a_3\wedge b_2 &
    -a_3\wedge b_1+a_1\wedge b_3 &
    -a_1\wedge b_2+a_2\wedge b_1 \end{array})\\
   & =-(a\times b)^{\rm T}.
\end{align*}

From Definition \ref{dfn:cross} we see that
\begin{equation*}
    [a\times b]=  \left(\begin{array}{ccc} 
        0 & a_1\wedge b_2-a_2\wedge b_1 & a_1\wedge b_3-a_3\wedge b_1 \\
        a_2\wedge b_1-a_1\wedge b_2 & 0 & a_2\wedge b_3-a_3\wedge b_2\\
        a_3\wedge b_1-a_1\wedge b_3 & a_3\wedge b_2-a_2\wedge b_3 & 0
    \end{array}\right).
\end{equation*}
On the other hand,
\begin{align*}
    [a]\wedge [b]&=
    \left(\begin{array}{ccc} 
        0 & a_3 & -a_2\\
        -a_3 & 0 & a_1\\ 
        a_2 &-a_1 & 0 
    \end{array}\right)\wedge \left(\begin{array}{ccc} 
        0 & b_3 & -b_2\\
        -b_3 & 0 & b_1\\ 
        b_2 &-b_1 & 0
    \end{array}\right)\\
    &=
    \left(\begin{array}{ccc}           -a_2\wedge b_2 - a_3\wedge     b_3 & a_2\wedge b_1 &          a_3\wedge b_1\\
        a_1\wedge b_2 & -a_3\wedge b_3-a_1\wedge b_1 & a_3\wedge b_2\\ a_1\wedge b_3 &a_2\wedge b_3 & -a_1\wedge b_1-a_2\wedge b_2
    \end{array}\right)\\
    &= -b\wedge a^\rT - I\otimes \sum_{j=1}^3 a_j\wedge b_j.
\end{align*}
and
\begin{align*}
    [b]\wedge [a] &=\left(\begin{array}{ccc}         -b_2\wedge a_2-b_3\wedge       a_3 & b_2\wedge a_1 &          b_3\wedge a_1\\
        b_1\wedge a_2 & -b_3\wedge a_3-b_1\wedge a_1 & b_3\wedge a_2\\ b_1\wedge a_3 & b_2\wedge a_3 & -b_1\wedge a_1-b_2\wedge a_2
    \end{array}\right)\\
    &=
    \left(\begin{array}{ccc}           a_2\wedge b_2 + a_3\wedge      b_3 & -a_1\wedge b_2 &         -a_1\wedge b_3\\
        -a_2\wedge b_1 & a_3\wedge b_3+a_1\wedge b_1 & -a_2\wedge b_3\\ -a_3\wedge b_1 & -a_3\wedge b_2 & a_1\wedge b_1 + a_2\wedge b_2
    \end{array}\right)\\
    &= - a \wedge b^\rT + I\otimes \sum_{j=1}^3 a_j\wedge b_j.
    \qedhere
\end{align*}
\end{proof}


\bibliography{Bibliografia-2021-10}

\end{document}